\documentclass[a4paper]{amsart}
\usepackage{fullpage}
\usepackage[english]{babel}
\usepackage[utf8x]{inputenc}
\usepackage{amsmath,amssymb,amsthm}
\usepackage[table]{xcolor}
\usepackage{graphicx,enumerate,url}

\usepackage{tikz,subcaption}
\usetikzlibrary{math,calc,through,backgrounds,shapes,matrix,decorations.markings,decorations.pathmorphing,arrows.meta}
\usetikzlibrary{matrix}
\usetikzlibrary{shapes,decorations.pathmorphing, decorations.markings, calc}
\usetikzlibrary{arrows.meta}
\makeatletter
\tikzset{
  a/.style={
    double=yellow!10,
    draw=teal!50,
  },
  foo /.tip={Stealth[inset=0pt, length=3pt, width=4pt, fill=yellow!10]},
  pics/arc arrow/.style args={#1:#2:#3}{
    code={
      \draw[a, -foo] (0, 0) arc (#1:#2:#3) coordinate (arc@temp);
      \path (arc@temp) ++(270+#2:4pt) ++(#2:3pt) coordinate (-a);
    }
  },
  pics/straight arrow/.style={
    code={
      \begin{scope}[rotate=#1]
        \draw[a, foo-foo] (0, -0.5) -- (0, 0.5);
        \fill[yellow!10] (-1pt, 0.5cm-4.1pt) rectangle (1pt, 0.5cm-3.9pt+\pgflinewidth);
        \fill[yellow!10] (1pt, -0.5cm+4.1pt) rectangle (-1pt, -0.5cm+3.9pt-\pgflinewidth);
        \coordinate (-a) at (3pt, -0.5cm + 4pt);
      \end{scope}
    }
  },
  mydash/.style={dash pattern=on 2mm off 0.5mm}
}
\makeatother

\usepackage{array}
\newcolumntype{C}[1]{>{\centering\arraybackslash$}p{#1}<{$}}
\usepackage{multirow}
\usepackage{multicol}
\usepackage{hhline}
\newcommand{\one}{{e}}
\newcommand{\X}{{\mathcal X}}

\usepackage{hyperref}
\usepackage{enumitem}
\setenumerate{label=\normalfont{(\alph*)}, itemsep=5pt, topsep=5pt}

\definecolor{darkblue}{rgb}{0,0,0.7}

\definecolor{lightblue}{rgb}{0.68,0.85,1}

\definecolor{lightgrey}{rgb}{0.9,0.9,0.9}

\definecolor{grey}{rgb}{0.5,0.5,0.5}

\usepackage{url, hypcap}
\hypersetup{colorlinks=true, citecolor=darkblue, linkcolor=darkblue}

\numberwithin{equation}{section}

\usepackage{cleveref}

\numberwithin{equation}{section}
\numberwithin{figure}{section}

\newtheorem{theorem}{Theorem}[section]
\newtheorem{proposition}[theorem]{Proposition}
\newtheorem{lemma}[theorem]{Lemma}
\newtheorem{corollary}[theorem]{Corollary}
\newtheorem{definition}[theorem]{Definition}

\theoremstyle{definition}
\newtheorem{example}[theorem]{Example}
\newtheorem{remark}[theorem]{Remark}

\usepackage{cite}
\usepackage{color}


\newcommand{\Cov}{\operatorname{Cov}}

\def\rk{\operatorname{rk}}



\def\NN{\mathbb{N}}

\def\RR{\mathbb{R}}
\def\RRd1{\mathbb{R}^{d+1}}

\newcommand{\lspan}{\operatorname{span}}

\usepackage{verbatim}





\def\cN{\mathcal{N}}


\usepackage{wrapfig}

\newcommand{\Dfn}[1]{\emph{\bfseries #1}}
\newcommand{\Var}[1]{{\mathbb{V}[#1]}}

\newcommand{\Sym}{\mathcal{S}}
\newcommand{\inv}{\operatorname{inv}}
\newcommand{\Inv}{\operatorname{Inv}}

\newcommand{\des}{\operatorname{des}}
\newcommand{\Des}{\operatorname{Des}}
\newcommand{\ord}{\operatorname{ord}}
\newcommand{\Exp}[1]{{\mathbb{E}[#1]}}
\newcommand{\Pro}[1]{{\mathbb{P}[#1]}}

\newcommand{\height}{\operatorname{ht}}
\newcommand{\Nroot}[1]{{[#1]}}
\newcommand{\Oroot}[1]{{[#1]}}
\newcommand{\Proot}[1]{{[\widetilde{#1}]}}

\newcommand{\Pinv}{\X_{P_{\leq d}}}
\newcommand{\Ninv}{\X_{N_{\leq d}}}
\newcommand{\Oinv}{\X_{O_{\leq d}}}

\newcommand{\W}[4]{W^{(#1#2)}(#3,#4)}

\newcommand{\dinv}{\operatorname{inv}_d}
\newcommand{\ddes}{\operatorname{des}_d}

\newcommand{\Phiplus}{\Phi^+}
\newcommand{\Phiddes}[1]{\Phi_{\operatorname{des}}^{(#1)}}
\newcommand{\Phidinv}[1]{\Phi_{\operatorname{inv}}^{(#1)}}

\newcommand{\sref}{\mathcal{S}}
\newcommand{\rref}{\mathcal{R}}

\newcommand{\ie}{\textit{i.e.}}


\title[CLTs for generalized descents and inversions]{Central limit theorems for generalized descents and generalized inversions in finite root systems}

\author[K.~Meier]{Kathrin Meier}
\address[K.~Meier \& C.~Stump]{Fakultät für Mathematik, Ruhr-Universit\"at Bochum, Germany}
\email{\{kathrin.meier,christian.stump\}@rub.de}

\author[C.~Stump]{Christian Stump}

\begin{document}

\begin{abstract}
  We consider generalized inversions and descents in finite Weyl groups.
  We establish Coxeter-theoretic properties of indicator random variables of positive roots such as the covariance of two such indicator random variables.
  We then compute the variances of generalized inversions and descents in classical types.
  We finally use the dependency graph method to prove central limit theorems for general antichains in root posets and in particular for generalized descents, and then for generalized inversions.
\end{abstract}

\maketitle

\section{Introduction}
\label{sec:intro}

Permutation statistics have been studied in great detail.
Two of the most prominent examples are the following:
An \Dfn{inversion} of a permutation $\sigma \in \mathcal S_n$ is a pair $(ij)$ with $1 \leq i < j \leq n$ and $\sigma(i) > \sigma(j)$, and a \Dfn{descent} is an inversion of distance~$1$, this is, an inversion $(ij)$ with $j-i = 1$.
The \Dfn{inversion statistic} and the \Dfn{descent statistic} on permutations then count inversions and descents, respectively,
\[
\inv(\sigma) = \big| \{ 1 \leq i < j \leq n \mid \sigma(i) > \sigma(j)\} \big|, \quad
\des(\sigma) = \big| \{ 1 \leq i \leq n-1 \mid \sigma(i) > \sigma(i+1)\} \big|.
\]
The associated random variables $\X_{\inv}$ and $\X_{\des}$ are obtained by asking how many inversions or, respectively, descents does a permutation drawn uniformly at random from~$\mathcal S_n$ have.
Both of these random variables are well-studied, their mean values and variances are known and both satisfy central limit theorems, see for example~\cite[Examples~5.3 \& 5.5]{bender1973central}.

\medskip

Generalizations of these classical results towards various directions have been considered.
The aim of this paper is to combine the follwoing two considerations.

\medskip

One may define a $d$\Dfn{-inversion} as an inversion of distance at most~$d$, and a $d$\Dfn{-descent} to be an inversion of distance exactly~$d$.
The $d$\Dfn{-inversion statistic} and the $d$\Dfn{-descent statistic} then count $d$-inversions and $d$-descents, respectively,
\begin{align*}
  \dinv(\sigma) &= \big| \{ 1 \leq i < j \leq \min\{i+d,n\} \mid \sigma(i) > \sigma(j)\} \big|, \\
  \ddes(\sigma) &= \big| \{ 1 \leq i \leq n-d \mid \sigma(i) > \sigma(i+d)\} \big|.
\end{align*}
The variance for the corresponding random variable $\X_{\dinv}$ have been computed in~\cite[Lemma~1]{bona_2008} for $n \geq 2d$ and in~\cite[Theorem~1]{Pike_2011} for all parameters $n,d$.
Both papers then deduce central limit theorems for $d$-inversions\footnote{In both references, this statistic was called $d$-descents. We decided to use the name $d$-decents for the other variant of generalization which appears to not have been previously considered in the context of their asymptotic behaviour.} for fixed~$d$ in~\cite[Theorem~2]{bona_2008} and for general~$d = d_n$ as a function of~$n$ in~\cite[Theorem~4]{Pike_2011}.
The latter also obtains rates of convergence in the cases that $d_n$ grows faster than~$n^{2/3}$ or slower than~$n^{1/3}$.

\medskip

The second generalization is towards finite Coxeter groups, we refer to Section~\ref{sec:rootsystems} for definitions.
In~\cite[Theorems~3.1 \&~4.1]{Kahle_2019}, a random variable~$\X_\beta$ for any root~$\beta$ in the root system of a finite Coxeter group is defined.
These variables are then used to uniformly express the mean values and variances of the random variables for the descent and inversion statistics in terms of root system data.
In addition,~\cite[Theorems~ 6.1 \& 6.2]{Kahle_2019} give necessary and sufficient conditions on sequences of finite Coxeter groups for which these statistics satisfy central limit theorems.

\medskip

We study in this paper a natural way to jointly generalize these two contexts.
The defined notions of $d$-inversions and $d$-descents for permutations have natural analogues for finite Weyl groups, or rather for finite \emph{crystallographic} root systems using the combinatorial notion of \emph{root posets}.
We generalize the above results by exhibiting structural interactions between finite Coxeter groups and probabilistic notions.
Most notably, we show that
\begin{itemize}
  \item the two random variables~$\X_\beta$ and~$\X_\gamma$ for two roots $\beta,\gamma$ are independent if and only if~$\beta$ and~$\gamma$ are orthogonal, and that
  \item for any set~$\Psi$ of positive roots, the graph on~$\Psi$ with edges between non-orthogonal roots is a dependency graphs of the random variable~$\sum_{\beta \in \Psi}\X_\beta$.
\end{itemize}
Using these interactions, we obtain central limit theorems in this generalized context.
Moreover, we obtain rates of convergence for certain growth behavior of the parameter~$d = d_n$ which generalize the above cases for the symmetric group.

\medskip

The paper is organized as follows.
We give the necessary background on the combinatorics of finite crystallographic root systems and their Weyl groups in Section~\ref{sec:rootsystems} and then proceed with presenting the main results of this paper in Section~\ref{sec:mainresults}.
Theorem~\ref{thm:antichain} is a central limit theorem for antichains in root posets together with a rate of convergence.
This in particular covers the situation of generalized descents in Corollary~\ref{cor:gendesc}.
Theorem~\ref{thm:geninv} is then the analogous result for generalized inversions and Corollary~\ref{cor:geninv} gives rates of convergence where our method is applicable.
Section~\ref{sec:randominvs} then uses the combinatorics of root systems to uniformly describe the covariance between indicator random variables for positive roots in root systems.
Section~\ref{section:CLT} uses all previously gathered information and also explicit calculations from the appendix to derive central limit theorems where possible using the dependency graph method in the version introduced in \cite{janson_1988}.
Section~\ref{sec:concretevariances} finally uses the combinatorial realizations of root systems in classical types to explicitly compute the variances in all considered situations.

\section*{Acknowledgements}

We would like to thank Valentin F\'eray, Thomas Kahle and Christoph Th\"ale for helpful discussions, and two anonymous referees for valuable comments.
They in particular revealed a gap in our application of the dependency graph method which we fixed in Theorem~\ref{thm:CovSet}.

\section{Main results}
\label{sec:main}

In order to state the main results, we recall necessary notions from finite root systems and define the considered random variables.

\subsection{Finite root systems}
\label{sec:rootsystems}
Most definitions in this subsection can be found in standard references such as~\cite[Part~I]{Bjoerner_2006}\footnote{There is one caveat: the below definition of root poset is also standard in the combinatorics literature on finite crystallographic root systems---but this notion differs from the notion of root poset in~\cite[Section~4.6]{Bjoerner_2006}.}.
These can also be found in~\cite{Kahle_2019}.

\medskip

Let $\Delta \subseteq \Phi^+ \subset \Phi = \Phi^+ \cup \Phi^- \subseteq V$ be a finite crystallographic root system of rank~$\rk(\Phi) = n = |\Delta|$ inside a Euclidean vector space~$V$ of dimension~$n$ with inner product $\langle \cdot,\cdot\rangle$.
The roots in $\Delta$ are called \Dfn{simple}, those in $\Phi^+$ \Dfn{positive} and those in $\Phi^- = -\Phi^+$ \Dfn{negative}.
Every positive root is an integer linear combination of simple roots.
This induces a natural partial order on~$\Phiplus$ given by the cover relations $\beta \prec \gamma$ for $\gamma - \beta \in \Delta$.
This \Dfn{root poset} is graded by $\height(\beta) = \sum_{\alpha \in \Delta}\lambda_\alpha$ for $\beta = \sum_{\alpha \in \Delta}\lambda_\alpha\alpha \in \Phiplus$.
Its rank is one less than the \Dfn{Coxeter number}~$h$ of the root system.
Root posets have no natural analogue beyond crystallographic types, see~\cite{CS2015}.

\medskip

Let $W \leq \operatorname{O}(n)$ be the corresponding Weyl group generated by the set of (orthogonal) \Dfn{simple reflections} $\sref = \{s_\alpha \mid \alpha \in \Delta \}$.
The set of \Dfn{reflections} in~$W$ is then
\[
\rref = \big\{ w s_\alpha w^{-1} \mid w \in W, s_\alpha \in \sref \big\} = \big\{ s_\beta \mid \beta \in \Phi^+\big\}\ .
\]
Denote by $\one \in W$ the identity element, by $w_\circ \in W$ the unique \Dfn{longest element} defined by the property $w_\circ(\Delta) = -\Delta$ or, equivalently, by $w_\circ(\Phi^+) = \Phi^-$.
The product $s_\beta s_\gamma \in W$ of two reflections corresponding to positive roots $\beta,\gamma \in \Phi^+$ is a rotation and we set
\[
\ord(\beta,\gamma) = \ord(s_\beta s_\gamma) = \min\big\{k > 0\mid (s_\beta s_\gamma)^k = \one \big\}.
\]
The condition to be crystallographic implies that this order can only take values in $\{2,3,4,6\}$.
An \Dfn{inversion} of an element $w \in W$ is a positive root that is sent by~$w$ to a negative root,
\[
\Inv(w) = \Phi^+ \cap w^{-1}(\Phi^-) = \big\{ \beta \in \Phi^+ \mid w(\beta) \in \Phi^-\big\}.
\]
Moreover, a \Dfn{descent} of $w$ is a simple inversion,
\[
\Des(w) = \Delta \cap w^{-1}(\Phi^-) = \big\{ \beta \in \Delta \mid w(\beta) \in \Phi^-\big\}.
\]
\begin{example}
  \label{ex:A}
  The irreducible root system of type $A_{n-1}$ may be realized inside the Euclidean space $\RR^n$ with standard basis $\{e_1,\dots,e_n\}$ as $\Delta \subseteq \Phiplus \subseteq \Phi$ given by
  \[
  \{ e_{i+1} - e_i \mid 1 \leq i < n\} \subseteq \{ e_j - e_i \mid 1 \leq i < j \leq n\} \subseteq \{ e_j - e_i \mid 1 \leq i \neq j \leq n\}\,.
  \]
  We have $e_j - e_i = (e_{i+1}-e_i) + \dots  + (e_j - e_{j-1}) \in \Phiplus$ so $\height(e_j-e_i) = j-i$.
  We abbreviately write $[ij]$ for $e_j - e_i$.
  Its Weyl group is the symmetric group $\Sym_n$ acting on $\RR^n$ by permuting the standard basis.
  The simple reflection are the simple transpositions $s_{[i,i+1]} = (i,i+1)$ for $1 \leq i < n$ and the reflections are all transpositions $s_{[ij]} = (i,j)$ for $1 \leq i < j \leq n$.
  Moreover, we have
  \[
  \ord\big(s_{[ij]}s_{[k\ell]}\big) = \begin{cases}
    1 &\text{ if } |\{i,j\} \cap \{k,\ell\} | = 2\,, \\
    2 &\text{ if } |\{i,j\} \cap \{k,\ell\} | = 0\,, \\
    3 &\text{ if } |\{i,j\} \cap \{k,\ell\} | = 1\,.
  \end{cases}
  \]
  The root poset is, for $n=5$, depicted as
  \begin{center}
    \begin{tikzpicture}[scale=0.9]
      \foreach \i in {1,2,3,4} {
        \draw[-] (2*\i+1,1) -- (2+\i,\i);
        \draw[-] (2*\i+1,1) -- (5+\i,5-\i);
      }
      \foreach \i/\j in {1/2,2/3,3/4,4/5,1/3,2/4,3/5,1/4,2/5,1/5}
      \node[circle,fill=white,inner sep=2pt] at (\i+\j,\j-\i) {$\Nroot{\i\j}$};
    \end{tikzpicture}
  \end{center}
\end{example}

\begin{example}
  \label{ex:B}
  The irreducible root system of type $B_n$ may be realized inside the Euclidean space $\RR^n$ with standard basis $\{e_1,\dots,e_n\}$ as $\Delta \subseteq \Phiplus \subseteq \Phi$ given by
  \begin{align*}
    \{e_1\} \, \cup \, \{ e_{i+1} - e_i \mid 1 \leq i < n\} &\subseteq \{\phantom{\pm}e_i \mid 1 \leq i \leq n\} \, \cup \,  \{ e_j \pm e_i \mid 1 \leq i < j \leq n\} \\
    &\subseteq \{\pm e_i \mid 1 \leq i \leq n\} \, \cup \, \{ e_j \pm e_i \mid 1 \leq i \neq j \leq n\}\,.
  \end{align*}
  As in type $A_{n-1}$, we have $e_j - e_i = (e_{i+1}-e_i) + \dots + (e_j - e_{j-1})$.
  Moreover,  $e_i = e_1 + (e_i-e_1)$
  and
  \[
  e_j+e_i = 2 e_1 + \sum\limits_{\ell = 1}^{i-1} (e_{\ell +1} - e_{\ell} ) + \sum\limits_{k = 1}^{j-1} (e_{k +1} - e_{k})\,.
  \]
  This yields
  \[
  \height(e_j-e_i) = j-i, \quad \height(e_i) = i, \quad \height(e_j+e_i) = j+i\,.
  \]
  We abbreviately write $\Nroot{ij}$ for $e_j-e_i$, $\Proot{ij}$ for $e_j+e_i$ and $\Oroot{i}$ for $e_i$.
  Its Weyl group is the group of signed permutations---all bijections from $\{\pm 1 , \dots \pm n\}$ to itself such that $\pi(-i) = -\pi(i)$, again acting on $\RR^n$ by permuting the standard basis.
  The simple reflection are
  \[
  s_\Nroot{i,i+1} = (i,i+1)(-i,-i-1),\quad
  s_\Oroot{1}  = (1,-1)\, ,
  \]
  and all transpositions are
  \[
  s_\Nroot{i,j} = (i,j)(-i,-j), \qquad
  s_\Oroot{i}  = (i,-i) ,\qquad
  s_\Proot{i,j} = (i,-j)(-i,j)
  \]
  Moreover, we have additionally to type A
  \begin{align*}
    \ord\big(s_\Oroot{i}s_\Oroot{j}\big) &= \begin{cases}
      1 &\text{ if } i=j \\
      2 &\text{ if } i \neq j
    \end{cases}
    \quad
    &&\ord\big(s_\Oroot{i}s_\Nroot{k\ell}\big)&& \mkern-22mu = \begin{cases}
      2 &\text{ if } |\{i\} \cap \{k, \ell\} | = 0\\
      4 &\text{ if } |\{i\} \cap \{k, \ell\} | = 1
    \end{cases} \\
    \ord\big(s_\Oroot{i}s_\Proot{k\ell}\big) &= \begin{cases}
      2 &\text{ if } |\{i\} \cap \{k, \ell\} | = 0\\
      4 &\text{ if } |\{i\} \cap \{k, \ell\} | = 1
    \end{cases}
    \quad
    &&\ord\big(s_\Nroot{ij}s_\Proot{k\ell}\big) && \mkern-22mu = \begin{cases}
      2 &\text{ if } |\{i,j\} \cap \{k, \ell\} | \in \{0,2\}\\
      3 &\text{ if } |\{i,j\} \cap \{k, \ell\} | = 1
    \end{cases}\\
    \ord\big(s_\Proot{ij}s_\Proot{k\ell}\big) &= \begin{cases}
      1 &\text{ if } |\{i,j\} \cap \{k, \ell\} | = 2 \\
      2 &\text{ if } |\{i,j\} \cap \{k, \ell\} | = 0 \\
      3 &\text{ if } |\{i,j\} \cap \{k, \ell\} | = 1 
    \end{cases}
  \end{align*}
  
  The root poset is, for $n=4$, depicted as
  \begin{center}
    \begin{tikzpicture}[scale=0.9]
      \foreach \i in {1,2,3,4}
      \draw[-] (2*\i-1,1) -- (1,2*\i-1) -- (5-\i,3+\i);
      \foreach \i in {2,3}
      \draw[-] (2*\i-1,1) -- (3+\i,5-\i);
      \foreach \i in {1,2,3,4}
      \node[circle,fill=white,inner sep=2pt] at (\i,\i) {$\Oroot{\i}$};
      \foreach \i/\j in {1/2,2/3,3/4,1/3,2/4,1/4}
      \node[circle,fill=white,inner sep=2pt] at (\i+\j,\j-\i) {$\Nroot{\i\j}$};
      \foreach \i/\j in {1/2,2/3,3/4,1/3,2/4,1/4} 
      \node[circle,fill=white,inner sep=2pt] at (\j-\i,\j+\i) {$\Proot{\i\j}$};
    \end{tikzpicture}
  \end{center}
\end{example}

\subsection{Main results}
\label{sec:mainresults}

For $\beta \in \Phiplus$, one considers the Bernoulli random variable on elements chosen uniformly at random in~$W$ indicating whether or not the positive root~$\beta$ is an inversion,
\begin{align}
  \label{def:RV}
  \X_\beta(w) = \begin{cases}
    1 &\text{ if } w(\beta) \in \Phi^-\ , \\
    0 &\text{ if } w(\beta) \in \Phiplus\ .
  \end{cases}
\end{align}
In other words, $\X_\beta$ is the Bernoulli random variable given by
\begin{equation}
  \Pro{\X_\beta = 1} = \frac{\big|\{w \in W \mid w(\beta) \in \Phi^-\}\big|}{|W|}\,, \qquad
  \Pro{\X_\beta = 0} = \frac{\big|\{w \in W \mid w(\beta) \in \Phi^+\}\big|}{|W|}\,.
  \label{eq:Xbetadef}
\end{equation}
Moreover, for a subset $\Psi \subseteq \Phiplus$ we set $\X_\Psi = \sum_{\beta \in \Psi} \X_\beta$.

\medskip

The main results of this paper concern central limits of this random variable for certain natural subsets of positive roots.
Before presenting these, we observe that the cardinality of $\Psi$ must necessarily go to infinity for $\X_\Psi$ to have a central limit.
But the following example shows that this is not the only obstruction.

\begin{example}
  \label{ex:uniform}
  Let $\Phi$ be the root system of type $A_{n-1}$ and consider
  \[
  \Psi^{(n)} = \{e_i - e_1 \mid 1 < i \leq n\} \subseteq \Phiplus
  \]
  of increasing cardinality.
  In this case $\X_{\Psi^{(n)}}(\pi) = |\{i \mid 1 < i \leq n \text{ and } \pi(i) > \pi(1) \}| $ for $\pi \in \Sym_n$.
  This implies that $\X_{\Psi^{(n)}}$ is uniformly distributed for all~$n$ and thus does not have a central limit.
\end{example}

The following definitions are natural generalizations of $d$-inversions and $d$-descents for permutations.

\begin{definition}
  Let $\Phi$ be a finite crystallographic root system with root poset $\Phiplus$ and let $d \in \mathbb{N}$.
  We then set
  \begin{align}
    \Phiddes{d} = \big\{ \beta \in \Phiplus \mid \height(\beta) = d \big\}, \quad \Phidinv{d} = \big\{ \beta \in \Phiplus \mid \height(\beta) \leq d \big\}
    \label{eq:ddesdinv}
  \end{align}
  of positive roots.
  A (generalized) $d$\Dfn{-inversion} of an element $w \in W$ is an inversion of $w$ inside $\Phidinv{d}$ and a (generalized) $d$\Dfn{-descent} is an inversion of $w$ inside $\Phiddes{d}$.
\end{definition}

It can easily be seen, compare Example~\ref{ex:A}, that this notion generalizes the above notions for the symmetric group.
Moreover, the root poset has height $h-1$ and thus $d$-inversions are the same as usual inversions for Weyl groups in the case of $d = h-1$.
Similarly, since the roots having height~$1$ in the root poset are the simple roots~$\Delta$, generalized $d$-descents are the same as usual descents for $d = 1$.
For better readability, we from now on drop the term \emph{generalized}.

\medskip
\begin{example}
  The notion of an inversion of height~$d$ has the following graphical interpretation in all classical types.
  We show this in the following situations in the four classical types.
  In type $A_4$ with $d=3$, we obtain $\Phiddes{3} = \{\Nroot{14}, \Nroot{25}\}$, so we compare entries in the one-line notation of distance~$3$, indicated as
  
  \begin{figure}[h]
    \centering
    \begin{tikzpicture}
      \foreach \i/\j in {1/4, 2/5}
      \draw[->,thick] (\i+0.05,0.35) to [bend left=40] node[above]{\small{$\Nroot{\i\j}$}} (\j-0.05,0.35);
      \foreach \i in {1,2,...,5}
      \node[circle,fill=white,inner sep=2pt] at (\i,0) {${\sigma_{\i}}$};
      \foreach \i in {1.5, ..., 4.5}
      \draw [line width=2pt, opacity=0.1] (\i, 0.5) -- (\i, -0.5);
    \end{tikzpicture}
    \caption*{Example for $A_4$}
  \end{figure}
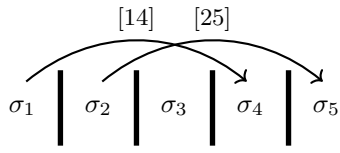
  
  In type $B_4$ with $d=3$, we have $\Phiddes{3} = \{\Proot{12}, \Oroot{3}, \Nroot{14}\}$ and in type $C_4$ with $d=3$, we have $\Phiddes{3} = \{\Oroot{2}, \Proot{13}, \Nroot{14}\}$.
  We thus compare entries of distance~$3$ in the below sense under the additional condition that we only compare $\overline\sigma_i$ with $\sigma_j$ if $i \leq j$.
  This is pictorially indicated as
  \begin{figure}[h]
    \centering
    \begin{tikzpicture}
      \foreach \i/\j in {1/2}
      \draw[->,thick] (0.9*-\i-0.05,0.35) to [bend left=40] node[pos=0.2, above]{\small{$\Proot{\i\j}$}} (0.9*\j-0.05,0.35);
      \foreach \i/\j in {0/3}
      \draw[->,thick] (0.9*\i+0.05,0.35) to [bend left=70] node[above]{\small{$\Oroot{\j}$}} (0.9*\j-0.05,0.35);
      \foreach \i/\j in {1/4}
      \draw[->,thick] (0.9*\i+0.05,0.35) to [bend left=40] node[pos=0.8, above]{\small{$\Nroot{\i\j}$}} (0.9*\j-0.05,0.35);
      \foreach \i in {1,2,...,4}
      \node[circle,fill=white,inner sep=2pt] at (0.9*-\i,0) {$\overline{\sigma_{\i}}$};
      \node[circle,fill=white,inner sep=2pt] at (0,0) {$0$};
      \foreach \i in {1,2,...,4}
      \node[circle,fill=white,inner sep=2pt] at (0.9*\i,0) {${\sigma_{\i}}$};
      \foreach \i in {-3.15, -2.25, ..., 3.6 }
      \draw [line width=2pt, opacity=0.1] (\i, 0.5) -- (\i, -0.5);
    \end{tikzpicture}
    \caption*{Example for $B_4$}
  \end{figure}
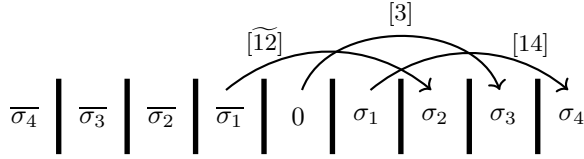
  \begin{figure}[h]
    \centering
    \begin{tikzpicture}
      \foreach \i/\j in {1/2}
      \draw[->,thick] (0.9*-\i-0.05,0.35) to [bend left=70] node[above]{\small{$\Proot{\i3}$}} (0.9*\j-0.05,0.35);
      \foreach \i/\j in {-2/1}
      \draw[->,thick] (0.9*\i+0.05,0.35) to [bend left=40] node[pos=0.2, above]{\small{$\Oroot{2}$}} (0.9*\j-0.05,0.35);
      \foreach \i/\j in {1/4}
      \draw[->,thick] (0.9*\i+0.9*0.05-0.9*1,0.35) to [bend left=40] node[pos=0.8, above]{\small{$\Nroot{\i\j}$}} (0.9*\j-0.9*1.05,0.35);
      \foreach \i in {1,2,...,4}
      \node[circle,fill=white,inner sep=2pt] at (0.9*-\i,0) {$\overline{\sigma_{\i}}$};
      \foreach \i in {1,2,...,4}
      \node[circle,fill=white,inner sep=2pt] at (0.9*\i-0.9*1,0) {${\sigma_{\i}}$};
      \foreach \i in {-3.15, -2.25, ..., 2.7 }
      \draw [line width=2pt, opacity=0.1] (\i, 0.5) -- (\i, -0.5);
    \end{tikzpicture}
    \caption*{Example for $C_4$}
  \end{figure}
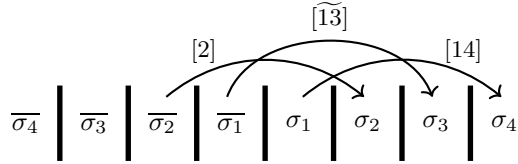
  
  In type $D_4$ with $d=3$, we have $\Phiddes{3} = \{\Proot{14}, \Proot{23}, \Nroot{14}\}$.
  We thus compare entries of distance $3$ in the below sense under the same condition as for types~$B$ and~$C$.
  This is pictorially indicated as
  
  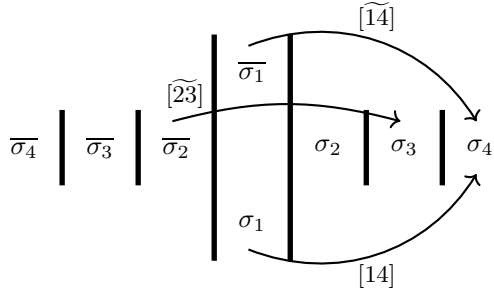
\begin{figure}[h]
    \centering
    \begin{tikzpicture}
      \draw[->,thick] (-1.05,0.35) to [bend left=15] node[pos=0.05, above]{\small{$\Proot{23}$}} (2-0.05,0.35);
      \draw[->,thick] (-0.05,1.35) to [bend left=40] node[above]{\small{$\Proot{14}$}} (3-0.05,0.35);
      \draw[->,thick] (-0.05,-1.35) to [bend left=-40] node[below]{\small{$\Nroot{14}$}} (3-0.05,-0.35);
      \node[circle,fill=white,inner sep=2pt] at (0,1) {$\overline{\sigma_{1}}$};
      \node[circle,fill=white,inner sep=2pt] at (0,-1) {$\sigma_{1}$};
      \foreach \i in {2,3,4}
      \node[circle,fill=white,inner sep=2pt] at (-\i+1,0) {$\overline{\sigma_{\i}}$};
      \foreach \i in {2,3,4}
      \node[circle,fill=white,inner sep=2pt] at (\i-1,0) {${\sigma_{\i}}$};
      \foreach \i in {-2.5,-1.5,1.5, 2.5}
      \draw [line width=2pt, opacity=0.1] (\i, 0.5) -- (\i, -0.5);
      \foreach \i in {-0.5,0.5}
      \draw [line width=2pt, opacity=0.1] (\i, 1.5) -- (\i, -1.5);
    \end{tikzpicture}
    \caption*{Example for $D_4$}
  \end{figure}
\end{example}

A sequence $\X^{(n)}$ of random variables is \Dfn{asymptotically normal} if its normalized version converges in distribution towards the standard Gaussian distribution.
In symbols,
\[
\frac{\X^{(n)}-\Exp{\X^{(n)}}}{\Var{\X^{(n)}}^{1/2}} \overset{\mathcal D}{\longrightarrow} \cN(0,1).
\]

The first main theorem considers a further generalization of $d$-descents.
An \Dfn{antichain} in the root poset is a set of positive roots that are pairwise incomparable.

\begin{theorem}
  \label{thm:antichain}
  Let $\{\Phi^{(n)}\}_{n \geq 1}$ be a sequence of finite crystallographic root systems and let $\{\Psi^{(n)}\}_{n \geq 1}$ be a sequence of antichains in its root poset of $\Phi^{(n)}$ of increasing cardinality, $|\Psi^{(n)}|~\longrightarrow~\infty$.
  The corresponding random variable $\X_{\Psi^{(n)}}$  is then asymptotically normal.
  The rate of convergence in distribution is bounded by $|\Psi^{(n)}|^{-1/2}$.
\end{theorem}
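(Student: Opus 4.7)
\medskip

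\textbf{Proof plan.} The plan is to apply the dependency graph central limit theorem of Janson~\cite{janson_1988} (in the refined form of Theorem~\ref{thm:CovSet} mentioned in the acknowledgements) to $\X_{\Psi^{(n)}}=\sum_{\beta\in\Psi^{(n)}}\X_\beta$. The starting point is that each summand is Bernoulli with parameter~$1/2$: right multiplication by $s_\beta$ gives a bijection of $W$ that swaps $\{w:w(\beta)\in\Phi^+\}$ and $\{w:w(\beta)\in\Phi^-\}$, so every $\X_\beta$ is uniformly bounded by~$1$ and has variance~$1/4$. From the structural results announced in the introduction, the non-orthogonality graph $G_{\Psi^{(n)}}$---with vertex set $\Psi^{(n)}$ and an edge between $\beta$ and $\gamma$ whenever $\langle\beta,\gamma\rangle\neq 0$---is a dependency graph for $\{\X_\beta\}_{\beta\in\Psi^{(n)}}$.

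The heart of the argument will be a uniform upper bound on the maximum degree $D$ of $G_{\Psi^{(n)}}$ when $\Psi^{(n)}$ is an antichain: I want to produce an absolute constant $M_0$, independent both of the rank of $\Phi^{(n)}$ and of $|\Psi^{(n)}|$, such that every $\beta\in\Psi^{(n)}$ has at most $M_0$ non-orthogonal neighbours inside $\Psi^{(n)}$. The antichain hypothesis is essential, since any non-orthogonal neighbour of $\beta$ in $\Psi^{(n)}$ must also be incomparable to $\beta$ in the root poset. I would prove the bound by a case analysis in the coordinate realisations of Examples~\ref{ex:A} and~\ref{ex:B} (and the analogous realisations in types~$C$ and~$D$), together with a direct check in the finitely many exceptional types. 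In each classical type, non-orthogonality with $\beta$ forces the candidate $\gamma$ to share or flip a specific index with~$\beta$, which in turn severely restricts the simple-root support of~$\gamma$ and makes two such candidates comparable through the chains of the root poset through~$\beta$ in all but a bounded number of configurations. I expect this degree bound to be the main obstacle.

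Once $D\leq M_0$ is in hand, the covariance expansion
\[
\Var(\X_{\Psi^{(n)}})=\frac{|\Psi^{(n)}|}{4}+\sum_{\{\beta,\gamma\}\in E(G_{\Psi^{(n)}})}\Cov(\X_\beta,\X_\gamma)
\]
is straightforward: the results of Section~\ref{sec:randominvs} show that $\Cov(\X_\beta,\X_\gamma)$ depends only on $\ord(s_\beta s_\gamma)\in\{3,4,6\}$ and takes finitely many explicit small values, and the degree bound controls the number of non-zero summands. This yields the two-sided estimate $\Var(\X_{\Psi^{(n)}})=\Theta(|\Psi^{(n)}|)$.

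Finally, Janson's dependency graph CLT for uniformly bounded summands delivers a Kolmogorov bound of the shape
\[
d_{\operatorname{Kol}}\!\left(\frac{\X_{\Psi^{(n)}}-\Exp{\X_{\Psi^{(n)}}}}{\sqrt{\Var{\X_{\Psi^{(n)}}}}},\;\cN(0,1)\right)\;\leq\;C\,\frac{|\Psi^{(n)}|\,M_0^{2}}{\Var{\X_{\Psi^{(n)}}}^{3/2}},
\]
which under $\Var(\X_{\Psi^{(n)}})\asymp|\Psi^{(n)}|$ collapses to the claimed $O(|\Psi^{(n)}|^{-1/2})$ rate, simultaneously giving asymptotic normality and the stated quantitative rate.
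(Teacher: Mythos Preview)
Your overall architecture matches the paper's proof: dependency graph via Theorem~\ref{thm:CovSet}, a uniform bound on the degree, a linear variance estimate, and Janson's theorem with $m=3$ to read off both normality and the $|\Psi^{(n)}|^{-1/2}$ rate. So the strategy is right.

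Where you diverge from the paper is in how you obtain the degree bound. You propose a coordinate case analysis in the classical types plus a check in the exceptionals. The paper instead invokes Sommers' theorem~\cite{Sommers_2005}: every antichain in the root poset is $W$-conjugate to a subset of the simple roots $\Delta$. Since the non-orthogonality graph on simple roots is precisely the Coxeter diagram, this immediately gives maximum degree $\delta_n\leq 3$, together with the extra structural facts (each connected component is a tree, and at most one edge has $\ord>3$) that the paper's Lemma~\ref{lem:sommers} records. Your case analysis would presumably recover $M_0=3$ as well, but at much greater cost and without the tree structure coming for free.

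There is a genuine gap in your variance lower bound. Knowing only $D\leq M_0$ and $|\Cov(\X_\beta,\X_\gamma)|\leq\tfrac{1}{6}$, the crude estimate
\[
\Var{\X_{\Psi^{(n)}}}\;\geq\;\frac{|\Psi^{(n)}|}{4}-M_0\,|\Psi^{(n)}|\cdot\frac{1}{6}
\]
is only positive when $M_0<\tfrac{3}{2}$, which fails already for the simple roots of $D_4$ (the central vertex has degree~$3$). The paper's Proposition~\ref{prop:VarCard} needs more than the degree bound: via Sommers it uses that the dependency graph is a disjoint union of Coxeter trees, so each irreducible component $\Delta_i$ contributes exactly $2|\Delta_i|-2$ ordered dependent pairs, of which at most two have $\ord>3$. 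This refined count yields $\sum_{\beta\neq\gamma}(1-\tfrac{2}{\ord(\beta,\gamma)})\leq\tfrac{2}{3}|\Psi^{(n)}|$ and hence $\Var{\X_{\Psi^{(n)}}}\geq|\Psi^{(n)}|/12$. You would need to extract equivalent structural information from your case analysis, not just a degree bound; as written, your sketch does not close this.
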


Since the set $\Phiddes{d}$ is an antichain for any~$d$, we obtain a central limit theorem for $d$-descents as a direct consequence.

\begin{corollary}
  \label{cor:gendesc}
  Let $\{\Phi^{(n)}\}_{n \geq 1}$ be a sequence of finite crystallographic root systems and let $\{d_n\}_{n \geq 1}$  be a sequence of integers such that $|\Phiddes{d_n}| \longrightarrow \infty$.
  The corresponding random variable $\X_{\Phiddes{d_n}}$  is then asymptotically normal with rate of convergence in distribution being bounded by~$|\Phiddes{d_n}|^{-1/2}$.
\end{corollary}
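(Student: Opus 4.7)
The plan is to derive this corollary as an essentially immediate consequence of Theorem~\ref{thm:antichain}. The only content is to verify that, for every $d$, the set $\Phiddes{d} = \{\beta \in \Phiplus \mid \height(\beta) = d\}$ is an antichain in the root poset.

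First I would recall the definition of the root poset. The partial order on $\Phiplus$ is generated by the cover relations $\beta \prec \gamma$ whenever $\gamma - \beta \in \Delta$, and the poset is graded by $\height$, with $\height(\gamma) = \height(\beta) + 1$ along each cover. Consequently, if $\beta \preceq \gamma$ then $\height(\beta) \leq \height(\gamma)$, with equality if and only if $\beta = \gamma$. In particular, two distinct positive roots of the same height are incomparable. Thus $\Phiddes{d_n}$ is an antichain in the root poset of $\Phi^{(n)}$.

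Given the hypothesis $|\Phiddes{d_n}| \longrightarrow \infty$, the sequence of antichains $\{\Phiddes{d_n}\}_{n \geq 1}$ satisfies the assumptions of Theorem~\ref{thm:antichain}. Applying that theorem directly yields asymptotic normality of $\X_{\Phiddes{d_n}}$ together with the claimed rate $|\Phiddes{d_n}|^{-1/2}$ for convergence in distribution.

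There is no real obstacle: the antichain property of level sets is immediate from the grading, and Theorem~\ref{thm:antichain} is exactly tailored to deliver both the CLT and the rate. The only conceptual point worth noting is that it is precisely the fact that $\Phiddes{d}$ lies on a single rank that lets the general antichain result cover $d$-descents; the more delicate statement for $d$-inversions (Theorem~\ref{thm:geninv}) requires additional work because $\Phidinv{d}$ is not an antichain.
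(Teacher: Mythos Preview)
Your proposal is correct and matches the paper's approach exactly: the paper simply observes that $\Phiddes{d}$ is an antichain for any~$d$ (since the root poset is graded by height) and then invokes Theorem~\ref{thm:antichain} directly. You have merely spelled out the grading argument in a bit more detail than the paper does.
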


\begin{remark}
  The proof of Theorem~\ref{thm:antichain} generalizes verbatim to any subset of roots that may be send to simple roots by an element of~$W$.
  If we allow any such subset of roots in a (not necessarily crystallographic) root system, we obtain asymptotic normality if $\Var{\X_{\Psi^{(n)}}} \rightarrow \infty$ as given in~\cite[Theorem~6.2]{Kahle_2019}.
\end{remark}

We next consider the analogous result for $d$-inversions.

\begin{theorem}
  \label{thm:geninv}
  Let $\{\Phi^{(n)}\}_{n \geq 1}$ be a sequence of finite crystallographic root systems and let $\{d_n\}_{n \geq 1}$  be a sequence of integers such that $|\Phidinv{d_n}| \longrightarrow \infty$.
  The corresponding random variable $\X_{\Phidinv{d_n}}$  is then asymptotically normal.
\end{theorem}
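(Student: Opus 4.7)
My plan is to apply the dependency graph method, in Janson's $1988$ cumulant formulation, to the family $\{\X_\beta\}_{\beta\in\Phidinv{d_n}}$. By the structural result announced in the introduction and proved in Section~\ref{sec:randominvs}, the graph on $\Phidinv{d_n}$ with edges between non-orthogonal roots is a dependency graph for this family. Janson's theorem then reduces asymptotic normality to verifying that $|\Phidinv{d_n}|\cdot D_n^{r-1}\cdot \sigma_n^{-r}\to 0$ for every $r\geq 3$, where $D_n$ is the maximum degree of this graph and $\sigma_n^2 = \Var[\X_{\Phidinv{d_n}}]$.

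First I would reduce to a single classical irreducible component whose rank tends to infinity. Since roots in different irreducible summands of $\Phi^{(n)}$ are orthogonal, the corresponding $\X_\beta$ are independent and both variances and higher cumulants split over components. Combined with the fact that the exceptional types are finitely many and of bounded size, the hypothesis $|\Phidinv{d_n}|\to\infty$ forces the contribution from some classical irreducible summand of growing rank to dominate. After passing to a subsequence we may therefore assume $\Phi^{(n)}$ is irreducible of type $A_{n-1}$, $B_n$, $C_n$ or $D_n$ with rank $n\to\infty$. Using the explicit root-poset descriptions of Examples~\ref{ex:A} and~\ref{ex:B} (and their $C$ and $D$ analogues), a direct count shows that each positive root of height at most $d$ is non-orthogonal to $O(d)$ other roots of height at most $d$, so that $D_n = O(d_n)$ with an absolute constant depending only on the type.

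The main obstacle is the matching lower bound on $\sigma_n^2$. In the antichain setting of Theorem~\ref{thm:antichain}, pointwise covariance control gives $\sigma_n^2\gtrsim|\Psi^{(n)}|$, which is sufficient; here, however, one needs the stronger bound $\sigma_n^2\gtrsim|\Phidinv{d_n}|\cdot d_n$ in order to close Janson's inequality when $d_n$ grows with~$n$. The reason this is available at all is that, unlike in the antichain case, the dominant contribution to $\sigma_n^2$ comes not from the $\Theta(|\Phidinv{d_n}|)$ diagonal terms $\Var(\X_\beta)$ but from the $\Theta(|\Phidinv{d_n}|\cdot d_n)$ non-orthogonal off-diagonal pairs inside $\Phidinv{d_n}$, whose covariances are computed in Section~\ref{sec:randominvs} in terms of the order of $s_\beta s_\gamma$ and are of uniform order of magnitude in each fixed classical type. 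Establishing the required lower bound therefore reduces to the explicit type-by-type computations of Section~\ref{sec:concretevariances}, which give $\sigma_n^2\gtrsim|\Phidinv{d_n}|\cdot d_n$ uniformly in $d_n\leq h-1$ after a careful accounting of the orders $3,4,6$. Substituting $D_n=O(d_n)$ and this variance lower bound into Janson's criterion reduces (for $r=3$) to a bound of order $(\rk\Phi^{(n)})^{-1/2}$, which vanishes by the reduction above; the bounds for $r\geq 4$ are analogous. This yields the asymptotic normality; the loss of a clean rate of convergence in the intermediate regimes of $d_n$ is precisely what restricts the quantitative statement to Corollary~\ref{cor:geninv}.
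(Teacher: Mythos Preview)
Your overall strategy---apply Janson's dependency-graph criterion with the orthogonality graph furnished by Theorem~\ref{thm:CovSet}, bound the maximal degree by $O(d_n)$, and feed in a variance lower bound from Section~\ref{sec:concretevariances}---is exactly the paper's. The gap is in the variance step. The claimed bound $\sigma_n^2\gtrsim |\Phidinv{d_n}|\cdot d_n$ is false. Your heuristic that the $\Theta(|\Phidinv{d_n}|\cdot d_n)$ off-diagonal covariances are ``of uniform order of magnitude'' overlooks that they carry both signs: already in type~$A$ one has $\Cov(\X_{\Nroot{ij}},\X_{\Nroot{ik}})=+\tfrac{1}{12}$ but $\Cov(\X_{\Nroot{ij}},\X_{\Nroot{jk}})=-\tfrac{1}{12}$, and these cancel substantially. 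Concretely, for $A_{n-1}$ with $2d\le n$ Theorem~\ref{thm:VarA} gives $\sigma_n^2=\tfrac{1}{18}d^3+\tfrac{1}{12}nd+O(d^2)$, so in the regime $d_n^2\ll n$ one has $\sigma_n^2\approx nd_n$ whereas $|\Phidinv{d_n}|\cdot d_n\approx nd_n^2$. The correct lower bound (Lemma~\ref{lem:varlowerbound}) is only $\sigma_n^2\gtrsim r_n d_n$ here, and with it the Janson ratio for $m=3$ becomes $r_n^{-1/2}d_n^{3/2}$, which need not vanish. The paper therefore must take $m=5$ in this regime (and in the intermediate one), and to organise the argument it partitions the irreducible components of $\Phi^{(n)}$ into three groups according to whether their rank $r_i$ satisfies $r_i<d_n$, $d_n\le r_i\le d_n^2$, or $r_i>d_n^2$, each admitting a different variance estimate. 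Your own final sentence, conceding that Corollary~\ref{cor:geninv} only yields rates in restricted regimes, is inconsistent with the preceding claim that $m=3$ closes uniformly with a bound of order $r_n^{-1/2}$.

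A secondary issue: your reduction to a single irreducible component of growing rank is not valid---for $\Phi^{(n)}=A_2^{\,n}$ one has $|\Phidinv{d_n}|\to\infty$ with no component of growing rank. This is another reason the paper keeps the full decomposition and argues via the aggregate ranks $r_A,r_B,r_C$ rather than passing to a subsequence.
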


The following corollary finally provides rates of convergence where our used method is applicable.
This generalizes the known rates for the symmetric group discussed at the end of Section~\ref{sec:intro}.
In the corollary, we assume without loss of generality that for all $n$, there exists a root in $\Phi^{(n)}$ of height $d_n$ and refer to Section~\ref{sec:cltdinv} for details concerning this assumption.

\begin{corollary}
  \label{cor:geninv}
  Let $\Phi^{(n)}$ be of rank~$r_n$ and let $\{\Phi_i\}$ denote its irreducible components.
  Set
  \begin{itemize}
    \item $r_A$ to be the rank of the union of those components $\Phi_i$ for which $\rk(\Phi_i) < d_n$,
    \item $r_B$ to be the rank of the union of those components $\Phi_i$ for which $d_n \leq \rk(\Phi_i) \leq d_n^2$, and
    \item $r_C$ to be the rank of the union of those components $\Phi_i$ for which $d_n^2 < \rk(\Phi_i)$,
  \end{itemize}
  (and we observe that $r_n = r_A + r_B + r_C$).
  We then have the following bounds on the rates of convergence:
  \begin{itemize}
    \item if $r_A$ grows linearly in $r_n$, we have the bound $r_n^{-1/2}$,
    \item if $r_B$ grows linearly in $r_n$ and $d_n$ grows faster than $r_n^{2/3}$, we have the bound $r_n \cdot d_n^{-3/2}$,
    \item if $r_C$ grows linearly in $r_n$ and $d_n$ grows slower than $r_n^{1/3}$, we have the bound $r_n^{-1/2}\cdot d_n^{3/2}$.
  \end{itemize}
\end{corollary}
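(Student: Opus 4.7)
The plan is to instantiate the quantitative dependency-graph CLT of~\cite{janson_1988} that underlies Theorem~\ref{thm:geninv}, but to track the rate of convergence it delivers. Concretely, for a sum $\X_\Psi = \sum_{\beta \in \Psi}\X_\beta$ of indicator variables whose non-orthogonality graph on $\Psi$ has maximum degree $L$, this method yields a Berry--Esseen-style upper bound for $\dKol\!\left(\tfrac{\X_\Psi - \Exp{\X_\Psi}}{\Var{\X_\Psi}^{1/2}}, \cN(0,1)\right)$ expressible as a polynomial in $|\Psi|$ and $L$ divided by a power of $\Var{\X_\Psi}$. The three rates claimed in the corollary will be the value of this bound in the three stated regimes.

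\textbf{Reduction to irreducible components.} Because $\Phi^{(n)}=\bigsqcup_i \Phi_i$ is an orthogonal direct sum and the Weyl group acts as $\prod_i W_i$ on mutually orthogonal subspaces of $V^{(n)}$, the indicator families $\{\X_\beta : \beta \in \Phi_i^+\cap\Phidinv{d_n}\}$ are mutually independent across components. Partitioning the components of $\Phi^{(n)}$ according to whether $\rk(\Phi_i)$ is below $d_n$, in the intermediate range $[d_n,d_n^2]$, or above $d_n^2$ produces a decomposition
\[
  \X_{\Phidinv{d_n}} \;=\; \X_A^{(n)} + \X_B^{(n)} + \X_C^{(n)}
\]
into three independent summands. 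The linear-growth hypothesis on the relevant $r_A$, $r_B$, or $r_C$ ensures that the variance of the corresponding piece is of the same order as the total variance, so a standard argument (Slutsky combined with the Berry--Esseen inequality for independent summands) reduces the overall Kolmogorov rate to the rate of the dominant piece.

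\textbf{Rates in each regime.} In the $r_A$ regime, all but a bounded number of positive roots in each relevant component have height at most $d_n$, so up to a term negligible compared to the total variance, $\X_A^{(n)}$ coincides with the full inversion statistic of the corresponding Weyl subgroup, and the rate $O(r_A^{-1/2}) = O(r_n^{-1/2})$ comes from the analysis of inversions in the spirit of~\cite[Theorem~6.1]{Kahle_2019}. For the $r_B$ and $r_C$ regimes, I would feed Janson's bound three inputs: (i) the maximum degree of the non-orthogonality graph on $\Phidinv{d_n}$ is $O(d_n)$ in each classical type, since a positive root has nonzero inner product only with roots sharing a coordinate index, and among those of height at most $d_n$ there are $O(d_n)$, as is transparent from Examples~\ref{ex:A}--\ref{ex:B} and the analogous coordinate realizations in types $C,D$; (ii) the cardinality $|\Phi_i^+ \cap \Phidinv{d_n}|$ is $\Theta(\rk(\Phi_i)\cdot d_n)$ in the relevant rank range; (iii) the variance asymptotics come from the explicit type-by-type formulas in Section~\ref{sec:concretevariances}. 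Substituting these inputs into the bound produces the two rates $r_n \cdot d_n^{-3/2}$ and $r_n^{-1/2}\cdot d_n^{3/2}$, matching Pike's rates for $d$-inversions in the symmetric group when specialized to type $A$.

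\textbf{Main obstacle.} The substantive work lies in extracting from Section~\ref{sec:concretevariances} the correct scaling of $\Var{\X_{\Phi_i^+ \cap \Phidinv{d_n}}}$ as a function of the pair $(\rk(\Phi_i),d_n)$, uniformly across the three regimes and in particular across the intermediate range $d_n \leq \rk(\Phi_i) \leq d_n^2$, where the variance interpolates between its behavior at small ranks (where $\Phidinv{d_n}$ covers essentially the whole positive root system) and at large ranks (where it is a thin slice near the bottom of the root poset). Once the variance estimates are in place, the three asserted rates follow from direct substitution into Janson's bound, combined with the variance-dominance argument from the reduction step to conclude the rate for the full independent sum.
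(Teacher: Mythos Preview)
Your overall strategy matches the paper's---apply the quantitative dependency-graph bound of Corollary~\ref{cor:rateofconvergence} with $k_n \lesssim r_n d_n$, $\delta_n \lesssim d_n$, and variance lower bounds extracted from Section~\ref{sec:concretevariances} via Lemma~\ref{lem:varlowerbound}---and your identification of the main obstacle is accurate. The gap is in your reduction step. You assert that under the linear-growth hypothesis the variance of the dominant piece is ``of the same order as the total variance'', and that Slutsky plus Berry--Esseen then reduces the Kolmogorov rate of $\X_{\Phidinv{d_n}}$ to that of the dominant piece. Neither holds in general: if, say, $r_A \approx r_n/2$ consists of many small components while $r_C \approx r_n/2$ is a single large component, then $\sigma_C^2$ can dwarf $\sigma_A^2$; and in any case a Kolmogorov rate for one independent summand does not control the rate of the sum unless the remaining summands are negligible or themselves come with quantified rates, which you do not supply. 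Your separate treatment of the $r_A$ regime via the full inversion statistic is also a detour with its own problem: for types $B,C,D$ with $r_i$ close to $d_n$, far more than a bounded number of roots can have height exceeding $d_n$.

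The paper avoids all of this by never isolating a piece of the random variable. It applies the bound $k_n \delta_n^2 / \sigma_n^3$ directly to the \emph{entire} sum $\X_{\Phidinv{d_n}}$ and uses the orthogonal decomposition only to lower-bound the \emph{total} variance via $\sigma_n^2 = \sigma_A^2 + \sigma_B^2 + \sigma_C^2 \geq \sigma_X^2$ for the relevant $X \in \{A,B,C\}$. The three claimed rates then drop out of $r_n d_n^3 / \sigma_n^3$ upon substituting the respective lower bounds from Lemma~\ref{lem:varlowerbound} and checking convergence to zero under the stated growth conditions on $d_n$; this uniformly handles all three regimes, including $r_A$, without any appeal to~\cite{Kahle_2019}.
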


In particular, this corollary shows that if $\Phi^{(n)}$ is irreducible (of then necessarily classical type), we obtain rates of convergence in the same situations as for the symmetric group.

\medskip

We finish this section with a discussion of the used techniques in the proofs and with an open problem.

\medskip

Based on a well-known Coxeter-theoretic result, the asymptotic normality in Theorem~\ref{thm:antichain} can be deduced in a rather straightforward way from the knowledge about asymptotic normality for descents, see Remark~\ref{rem:alternativeproof}.
We provide a more detailed alternative proof using the dependency graph method which also provides the proposed rate of convergence.
Corollary~\ref{cor:gendesc} then directly follows.
In addition, we provide explicit formulas for their variances in the classical types in Theorems~\ref{thm:VarA},\ref{thm:VarB},\ref{thm:VarC}, and \ref{thm:VarD}.

\medskip

Unfortunately, we do not have a uniform consideration for $d$-inversions.
Based on provided explicit formulas for their variances in the classical types in Theorems~\ref{thm:VarA}, \ref{thm:VarB}, \ref{thm:VarC}, and \ref{thm:VarD}, we prove a general lower bound in Lemma~\ref{lem:varlowerbound} which is then used via the dependency graph method to prove the proposed asymptotic normality in Theorem~\ref{thm:geninv} and rates of convergence in Corollary~\ref{cor:geninv}.

\medskip

The situation of $d$-inversions could be considered as a special case of \Dfn{order ideals} in the root poset, \ie, subsets $\Psi \subseteq \Phiplus$ such that $\beta \leq \gamma \in \Psi$ implies $\beta \in \Psi$.
We neither found an example of a sequence of order ideals of increasing size that does not have a central limit, nor do we have any particular reason to expect them to always have a central limit.
We are therefore not aware of any natural subset generalizing $d$-inversions that may be treated uniformly in the present context.
We propose this as an open question.
\begin{center}
  \emph{Do sequences of order ideals of increasing cardinality always have a central limit?}
\end{center}

\section{Random inversions for finite root systems}
\label{sec:randominvs}

The aim of this section is to understand how random variables $\{ \X_\beta \mid \beta \in \Phiplus\}$ interact.
The two main result are Theorems~\ref{thm:Cov} and \ref{thm:CovSet}.

\begin{theorem}
  \label{thm:Cov}
  For two positive roots $\beta, \gamma \in \Phiplus$, we have
  \[
  \Cov(\X_{\beta}, \X_{\gamma}) = \pm \left(\frac{1}{4} -  \frac{1}{2 \cdot \ord(\beta,\gamma)}\right),
  \]
  with the sign being positive if $\langle \beta, \gamma \rangle \geq 0$ and negative if $\langle \beta, \gamma \rangle \leq 0$.
  In particular, $\X_\beta$ and $\X_\gamma$ are independent if and only if~$\beta$ and $\gamma$ are orthogonal.
\end{theorem}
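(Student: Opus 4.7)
The plan is to reduce the computation to the rank-$2$ dihedral reflection subgroup $D := \langle s_\beta, s_\gamma\rangle$ and then to evaluate the count there directly.

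The marginals are $\Pro{\X_\beta = 1} = 1/2$: the involution $w \mapsto ws_\beta$ on $W$ swaps $\{\X_\beta = 0\}$ and $\{\X_\beta = 1\}$ since $(ws_\beta)(\beta) = -w(\beta)$. Hence $\Cov(\X_\beta,\X_\gamma) = \Pro{\X_\beta=1,\X_\gamma=1} - 1/4$. Set $m := \ord(\beta,\gamma)$, so $D$ is dihedral of order $2m$ and preserves the rank-$2$ root subsystem $\Phi_0 := \Phi \cap \lspan(\beta,\gamma)$; its positive part $\Phi_0^+ := \Phi_0 \cap \Phi^+$ has cardinality $m$ and contains $\beta,\gamma$. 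The reduction to $D$ rests on the following \emph{distinguished left coset decomposition}: each coset $wD$ contains a unique element $u$ with $u(\Phi_0^+) \subseteq \Phi^+$. Existence follows since $D$ acts simply transitively on the $2m$ positive systems of $\Phi_0$ and $w^{-1}(\Phi^+) \cap \Phi_0$ is one such system; uniqueness follows since if $u' = u d_0$ with $d_0 \neq 1$ also satisfied the property, then any $\sigma \in \Phi_0^+$ with $d_0(\sigma) \in \Phi_0^-$ would give $u'(\sigma) = u(d_0(\sigma)) \in -u(\Phi_0^+) \subseteq \Phi^-$, contradicting $u'(\Phi_0^+) \subseteq \Phi^+$. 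Writing $w = u_i d$ uniquely, the sign-preservation of $u_i$ on $\Phi_0$ yields $w(\beta) \in \Phi^-$ iff $d(\beta) \in \Phi_0^-$ (and analogously for $\gamma$), so that the joint distribution of $(\X_\beta,\X_\gamma)$ on $W$ coincides with that on $D$ acting on $\Phi_0$.

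Inside $D$ (dihedral of order $2m$), label $\Phi_0^+ = \{\rho_1, \dots, \rho_m\}$ in angular order, so that $\rho_1,\rho_m$ are the simple roots of $D$ and consecutive roots differ by the angle $\pi/m$; after relabeling, write $\beta = \rho_i,\gamma = \rho_j$ with $1 \le i < j \le m$. The $2m$ inversion sets of elements of $D$ are exactly the initial segments $\{\rho_1,\dots,\rho_k\}$ for $k=0,\dots,m$ together with the final segments $\{\rho_k,\dots,\rho_m\}$ for $k=2,\dots,m$. A direct count shows that exactly $m - (j-i)$ of these contain both $\rho_i$ and $\rho_j$, so
\[
\Pro{\X_\beta = 1,\X_\gamma = 1} = \frac{m-(j-i)}{2m}, \qquad \Cov(\X_\beta,\X_\gamma) = \frac{m - 2(j-i)}{4m}.
\]

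Finally, $s_\beta s_\gamma$ is a rotation of $\lspan(\beta,\gamma)$ by $2(j-i)\pi/m$, whence $\ord(\beta,\gamma) = m/\gcd(j-i,m)$; the sign of $\langle\beta,\gamma\rangle$ equals that of $\cos((j-i)\pi/m)$, which matches the sign of $m - 2(j-i)$. Matching to the target formula $\pm\bigl(\tfrac14 - \tfrac{1}{2\ord(\beta,\gamma)}\bigr) = \pm\tfrac{m - 2\gcd(j-i,m)}{4m}$ thus reduces to the divisibility condition $\min\bigl(j-i,\,m-(j-i)\bigr) \mid m$, which holds for every $j-i \in \{1,\dots,m-1\}$ in each of the crystallographic cases $m \in \{2,3,4,6\}$ by inspection. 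The independence claim follows since $\Cov(\X_\beta,\X_\gamma) = 0$ iff $\ord(\beta,\gamma) = 2$ iff $\beta \perp \gamma$. I expect the main obstacle to be the reduction step---producing and justifying the distinguished coset decomposition so that $(\X_\beta,\X_\gamma)$ becomes a function of the $D$-component alone; the dihedral count and the crystallographic case check are then routine.
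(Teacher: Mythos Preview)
Your reduction idea is the same as the paper's---localize the joint distribution of $(\X_\beta,\X_\gamma)$ to a rank-two dihedral situation---but your implementation has a genuine gap. You set $m := \ord(\beta,\gamma)$, take $D = \langle s_\beta, s_\gamma\rangle$ of order~$2m$, and then assert that $\Phi_0 := \Phi \cap \lspan(\beta,\gamma)$ has exactly~$m$ positive roots. This fails already in~$B_2$: if $\beta = e_1$ and $\gamma = e_2$ are the two orthogonal short roots, then $m = 2$ and $|D| = 4$, but $\Phi_0 = \Phi$ is all of~$B_2$ with $|\Phi_0^+| = 4$. The same happens for any two orthogonal roots spanning a $B_2$- or $G_2$-subsystem. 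Once $|\Phi_0^+| > m$, your distinguished coset decomposition collapses: $\Phi_0$ has $2|\Phi_0^+| > 2m$ positive systems, so~$D$ cannot act transitively on them, and there exist~$w$ for which no $d \in D$ satisfies $d(\Phi_0^+) = w^{-1}(\Phi^+) \cap \Phi_0$. Concretely, in the $B_2$ example the non-identity $D$-coset contains no element sending all of~$\Phi_0^+$ into~$\Phi^+$.

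There are two clean repairs. One is to replace~$\Phi_0$ by the root system~$\Phi_D$ of~$D$ itself (the roots~$\rho$ with $s_\rho \in D$); then $|\Phi_D^+| = m$ genuinely, your coset argument goes through, and since $s_\beta s_\gamma$ generates the rotation subgroup of~$D$ you automatically get $\gcd(j-i,m) = 1$, which makes your final divisibility check superfluous. The other repair---and this is what the paper does---is to keep~$\Phi_0$ but work with the full group $W(\Phi_0)$ rather than~$D$: the paper conjugates $\lspan(\beta,\gamma)$ to a standard parabolic subspace $\lspan(\Gamma)$ with $\Gamma \subseteq \Delta$, uses the parabolic decomposition $W = W^\Gamma \cdot W_\Gamma$ (your distinguished representatives are exactly the elements of~$W^\Gamma$), and then verifies Proposition~\ref{prop:Wpartition} by inspecting all pairs $\beta,\gamma$ in each dihedral type $A_1 \times A_1, A_2, B_2, G_2$. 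Your closing case check ``$\min(j-i,\,m-(j-i)) \mid m$ for $m \in \{2,3,4,6\}$'' is precisely this verification in disguise, with~$m$ silently reinterpreted as $|\Phi_0^+|$ rather than $\ord(\beta,\gamma)$---which also explains the otherwise circular line ``$\ord(\beta,\gamma) = m/\gcd(j-i,m)$''.
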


\begin{remark}
  One may rephrase this statement into an expression avoiding the case distinction by casting it in terms of the angle between~$\beta$ and $\gamma$.
  Let $\beta,\gamma \in \Phiplus$ and let
  \[
  \varphi = \angle(\beta,\gamma) = \arccos\big(\frac{\langle \beta,\gamma\rangle}{|\beta|\cdot|\gamma|}\big)
  \]
  be the angle in between them.
  Then
  \[
  \Cov(\X_{\beta}, \X_{\gamma}) = \tfrac{1}{12\pi}(3\pi-6\varphi)\,.
  \]
\end{remark}

\begin{example}
  \label{ex:dihedralrootsystems}
  The positive roots in the irreducible crystallographic root systems of dihedral types $A_2, B_2, C_2$ and $G_2$ are
  \begin{center}
    \begin{tikzpicture}
      \foreach \i/\j in {0/2, 60/2, 120/2}{
        \draw[-stealth](0,0) -- (  \i:\j);
      }
      \foreach \i/\j in {60/0.5, 120/1}{
        \pic at (\j,0) {arc arrow=0: \i:\j cm};
      }
      \foreach \i/\j/\l in {85/0.5/$\frac{\pi}{3}$, 140/1/$\frac{2\pi}{3}$}{
        \node at ( \i:\j) {\l};
      }
    \end{tikzpicture}
    \quad
    \begin{tikzpicture}
      \foreach \i/\j in {0/2, 45/2.83, 90/2, 135/2.83}{
        \draw[-stealth](0,0) -- (  \i:\j);
      }
      \foreach \i/\j in {45/0.5, 90/1, 135/1.5}{
        \pic at (\j,0) {arc arrow=0: \i:\j cm};
      }
      \foreach \i/\j/\l in {70/0.5/$\frac{\pi}{4}$, 105/1/$\frac{\pi}{2}$, 150/1.5/$\frac{3\pi}{4}$}{
        \node at ( \i:\j) {\l};
      }
    \end{tikzpicture}
    \quad
    \begin{tikzpicture}
      \foreach \i/\j in {0/2, 45/1.42, 90/2, 135/1.42}{
        \draw[-stealth](0,0) -- (  \i:\j);
      }
      \foreach \i/\j in {45/0.5, 90/1, 135/1.5}{
        \pic at (\j,0) {arc arrow=0: \i:\j cm};
      }
      \foreach \i/\j/\l in {70/0.5/$\frac{\pi}{4}$, 105/1/$\frac{\pi}{2}$, 150/1.5/$\frac{3\pi}{4}$}{
        \node at ( \i:\j) {\l};
      }
    \end{tikzpicture}
    
    \vspace{25pt}
    
    \begin{tikzpicture}
      \foreach \i/\j in {0/3, 30/1.73, 60/3, 90/1.73, 120/3, 150/1.73}{
        \draw[-stealth](0,0) -- (  \i:\j);
      }
      \foreach \i/\j in {30/0.5, 60/1, 90/1.5, 120/2.0, 150/2.5}{
        \pic at (\j,0) {arc arrow=0: \i:\j cm};
      }
      \foreach \i/\j/\l in {45/0.6/$\frac{\pi}{6}$, 75/1/$\frac{\pi}{3}$, 100/1.5/$\frac{\pi}{2}$, 130/2/$\frac{4\pi}{6}$, 160/2.5/$\frac{5\pi}{6}$}{
        \node at ( \i:\j) {\l};
      }
    \end{tikzpicture}
  \end{center}
\end{example}

\begin{theorem}
\label{thm:CovSet}
  Let $\Psi,\Psi' \subseteq \Phiplus$.
  If~$\Psi$ and~$\Psi'$ are orthogonal, \ie, $\ord(\beta,\gamma) = 2$ for all $\beta\in\Psi$ and $\gamma\in\Psi'$, then the sets $\{\X_\beta\}_{\beta\in\Psi}$ and $\{\X_\gamma\}_{\gamma\in\Psi'}$ of random variables are independent.
\end{theorem}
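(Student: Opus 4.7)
The plan is to produce a factorization of a uniformly random $w\in W$ into three independent pieces, two of which separately determine the inversion statuses on $\Psi$ and on $\Psi'$.

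First I would set up the relevant reflection subgroup. Set $W_1=\langle s_\beta\mid\beta\in\Psi\rangle$, $W_2=\langle s_\gamma\mid\gamma\in\Psi'\rangle$, $V_1=\lspan(\Psi)$ and $V_2=\lspan(\Psi')$. The hypothesis $\ord(\beta,\gamma)=2$ forces $V_1\perp V_2$, and since the reflection $s_\alpha$ fixes $\alpha^\perp$ pointwise, every element of $W_1$ acts trivially on $V_2$ and vice versa. Hence $W_1$ and $W_2$ commute and have trivial intersection, so $W_0:=W_1W_2\cong W_1\times W_2$ is a reflection subgroup of $W$. Its associated root subsystem decomposes orthogonally as $\Phi_0=\Phi_{01}\sqcup\Phi_{02}$ with $\Phi_{0i}\subseteq V_i\cap\Phi$, and $\Phi_0^+:=\Phi_0\cap\Phi^+=\Phi_{01}^+\sqcup\Phi_{02}^+$ is a positive system of $\Phi_0$ containing $\Psi\cup\Psi'$.

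Next I would invoke the standard coset structure for reflection subgroups of finite Coxeter groups: each coset $wW_0\in W/W_0$ contains a unique representative $w^*$ satisfying $\Inv(w^*)\cap\Phi_0^+=\emptyset$. This yields a bijection between $W$ and the triples $(w^*,v_1,v_2)$ with $v_1\in W_1$, $v_2\in W_2$, via $(w^*,v_1,v_2)\mapsto w^*v_1v_2$; drawing $w$ uniformly in $W$ therefore makes the three factors jointly independent and uniform on their respective domains.

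The heart of the proof is then a short verification that $\X_\beta(w)$ depends only on $v_1$ for $\beta\in\Psi$. Since $v_2\in W_2$ fixes $V_1\supseteq\{\beta\}$ pointwise, one has $w(\beta)=w^*v_1(\beta)$, and $v_1\in W_1$ sends $\beta$ into $\Phi_{01}$. If $v_1(\beta)\in\Phi_{01}^+\subseteq\Phi_0^+$, then $v_1(\beta)\notin\Inv(w^*)$ gives $w(\beta)\in\Phi^+$ and $\X_\beta(w)=0$; if instead $v_1(\beta)\in\Phi_{01}^-$, then $-v_1(\beta)\in\Phi_0^+$ is likewise not inverted by $w^*$, so $w(\beta)\in\Phi^-$ and $\X_\beta(w)=1$. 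In either case $\X_\beta(w)$ equals the inversion indicator of $\beta$ inside $W_1$ evaluated at $v_1$. The symmetric computation handles $\gamma\in\Psi'$ in terms of $v_2$. Since $v_1$ and $v_2$ are independent, the two families of random variables are independent.

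I expect the main obstacle to be the clean invocation of reflection subgroup theory in this non-parabolic setting, specifically the existence and uniqueness of the inversion-free coset representative $w^*$. If a self-contained argument is preferred, this can be established geometrically from the orthogonal decomposition $V=V_1\oplus V_2\oplus(V_1\oplus V_2)^\perp$ by checking that $w^{-1}(\Phi^+)\cap\Phi_0$ is a positive system of $\Phi_0$ and that $W_0$ acts simply transitively on such systems.
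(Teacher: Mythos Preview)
Your argument is correct, but it differs from the paper's in a meaningful way. The paper does \emph{not} work with the reflection subgroup $W_1\times W_2$; instead it conjugates $\lspan(\Psi)$ to a standard parabolic subspace $\lspan(\Gamma)$ for some $\Gamma\subseteq\Delta$ and then uses the ordinary two-factor parabolic decomposition $W=W^\Gamma\cdot W_\Gamma$. After this reduction, the variables $\{\X_\beta\}_{\beta\in\Psi}$ are measurable with respect to the $W_\Gamma$-component (via Lemma~\ref{lem:parabolicpositivity}), while the variables $\{\X_\gamma\}_{\gamma\in\Psi'}$ are measurable with respect to the $W^\Gamma$-component (since $W_\Gamma$ fixes $\lspan(\Gamma)^\perp\supseteq\Psi'$ pointwise). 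Independence then follows from the product structure $|W|=|W^\Gamma|\cdot|W_\Gamma|$.

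Your route avoids the conjugation step entirely and treats $\Psi$ and $\Psi'$ symmetrically via a three-factor decomposition $w=w^*v_1v_2$, at the price of invoking the minimal-coset-representative theory for \emph{arbitrary} reflection subgroups (Dyer/Deodhar) rather than just parabolics. The verification that $\X_\beta(w)$ depends only on $v_1$ is essentially your version of Lemma~\ref{lem:parabolicpositivity}, and your closing remark correctly identifies how to make the coset-representative step self-contained. The paper's approach is lighter on prerequisites; yours is cleaner structurally and would generalize more readily if one wanted independence among three or more mutually orthogonal families.
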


The proofs of the two theorems are very similar.
We start with proving Theorem~\ref{thm:Cov} by partitioning the Weyl group~$W$ depending on the behaviour on the positive roots $\beta,\gamma \in \Phiplus$.
For given signs $\varepsilon, \delta \in \{+, -\}$, we define the set
\[
\W{\varepsilon}{\delta}{\beta}{\gamma} = \big\{w \in W \mid \varepsilon\cdot w(\beta) \in \Phi^+, \delta\cdot w(\gamma) \in \Phi^+\big\} \subseteq W
\]
as those elements in the Weyl group that sends the given roots~$\beta$ and~$\gamma$ to positive or, respectively, negative roots depending on the given signs $\varepsilon,\delta$.
The core of the proof is the following relationship between the cardinalities of these four sets.

\begin{proposition}
  \label{prop:Wpartition}
  Let $\beta, \gamma \in \Phi^+$.
  Then
  \begin{align*}
    \big|W^{(++)}(\beta, \gamma)\big| &= \big|W^{(--)}(\beta, \gamma)\big|, \\
    \big|W^{(+-)}(\beta, \gamma)\big| &= \big|W^{(-+)}(\beta, \gamma)\big|. \\
    \intertext{If moreover $\beta \neq \gamma$, then}
    \big|W^{(++)}(\beta, \gamma)\big| &= \big(\ord(\beta,\gamma)-1\big)\cdot\big|W^{(+-)}(\beta,\gamma)\big| \text{ if } \langle \beta,\gamma \rangle \geq 0, \\
    \big|W^{(+-)}(\beta, \gamma)\big| &= \big(\ord(\beta,\gamma)-1\big)\cdot\big|W^{(++)}(\beta,\gamma)\big| \text{ if } \langle \beta,\gamma \rangle \leq 0.
  \end{align*}
  In particular, all four sets have equal cardinality if and only if~$\beta$ and $\gamma$ are orthogonal.
\end{proposition}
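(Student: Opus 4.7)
\emph{Overall strategy.} I would begin with a sign-flip bijection for the first two equalities, then reduce the remaining claims via coset decomposition to a computation inside the dihedral subgroup $W_{\beta,\gamma} := \langle s_\beta, s_\gamma\rangle$. The first two equalities follow from the map $w \mapsto w_\circ w$, which is a bijection $W \to W$ that simultaneously negates $w(\beta)$ and $w(\gamma)$ (because $w_\circ$ sends $\Phiplus$ to $\Phi^-$), and therefore exchanges $W^{(++)}(\beta,\gamma) \leftrightarrow W^{(--)}(\beta,\gamma)$ and $W^{(+-)}(\beta,\gamma) \leftrightarrow W^{(-+)}(\beta,\gamma)$. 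All the substantive content then lies in the third and fourth formulas.

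\emph{Reduction to a dihedral subsystem.} Let $m = \ord(\beta,\gamma)$, so $|W_{\beta,\gamma}| = 2m$, and let $\Phi_0$ denote the rank-$2$ root subsystem on which $W_{\beta,\gamma}$ acts as its Weyl group, with positive system $\Phi_0^+ := \Phi_0 \cap \Phiplus$. I would decompose $W = \bigsqcup_u u W_{\beta,\gamma}$ into left cosets. The key observation is that for any coset representative $u$, the set $u^{-1}(\Phiplus) \cap \Phi_0$ is itself a positive system on $\Phi_0$, cut out by the restriction to $\lspan(\Phi_0)$ of a linear functional defining $u^{-1}(\Phiplus)$. Since $W_{\beta,\gamma}$ acts simply transitively on the positive systems of $\Phi_0$, there is a unique $v_u \in W_{\beta,\gamma}$ with $v_u(\Phi_0^+) = u^{-1}(\Phiplus) \cap \Phi_0$. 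Substituting $v' = v_u^{-1} v$ inside the coset $uW_{\beta,\gamma}$ shows that the number of $uv$ with sign pattern $(\varepsilon,\delta)$ equals
\[
c_{\varepsilon\delta} := \bigl|\{v' \in W_{\beta,\gamma} : \varepsilon\, v'(\beta) \in \Phi_0^+,\ \delta\, v'(\gamma) \in \Phi_0^+\}\bigr|,
\]
independent of $u$. Hence $|W^{(\varepsilon\delta)}(\beta,\gamma)| = (|W|/2m)\,c_{\varepsilon\delta}$, and everything reduces to computing $c_{\varepsilon\delta}$ in the rank-$2$ case.

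\emph{Rank-$2$ computation.} For a crystallographic dihedral Coxeter group of type $I_2(m)$ with $m \in \{2,3,4,6\}$, I would use the standard description of the inversion sets: listing the positive roots of $\Phi_0$ linearly as $\rho_0, \rho_1, \ldots, \rho_{m-1}$, the inversion sets of the $2m$ elements of $W_{\beta,\gamma}$ are precisely $\emptyset$, the full set $\Phi_0^+$, and the initial segments $\{\rho_0,\ldots,\rho_{\ell-1}\}$ and final segments $\{\rho_{m-\ell},\ldots,\rho_{m-1}\}$ for $1 \le \ell \le m-1$. If $\langle\beta,\gamma\rangle \le 0$, then $\beta$ and $\gamma$ are the simple roots of $\Phi_0$, so $\beta = \rho_0$ and $\gamma = \rho_{m-1}$; reading off membership gives $c_{++} = c_{--} = 1$ and $c_{+-} = c_{-+} = m-1$. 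If $\langle\beta,\gamma\rangle > 0$, then $\beta,\gamma$ appear as two consecutive positive roots $\rho_j, \rho_{j+1}$ of $\Phi_0$ for some $j$, and the same inventory of inversion sets yields the reversed counts $c_{++} = c_{--} = m-1$ and $c_{+-} = c_{-+} = 1$. Combined with the coset reduction, this produces the factor $\ord(\beta,\gamma) - 1$ with the sign convention of the proposition, and the final equivalence (all four equal iff $\beta \perp \gamma$) follows from $\ord(\beta,\gamma) = 2 \Leftrightarrow \langle\beta,\gamma\rangle = 0$.

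\emph{Main obstacle.} The rank-$2$ counting is mechanical once the dihedral inversion sets are in hand; the delicate step is the coset reduction — specifically, verifying that the restricted functional always cuts out a genuine positive system of $\Phi_0$ (so that $u^{-1}(\Phiplus) \cap \Phi_0$ is never degenerate), and using the simply transitive action of $W_{\beta,\gamma}$ on these positive systems to transfer the count uniformly from one coset to all.
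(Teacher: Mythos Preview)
Your argument is correct and arrives at the same rank-$2$ reduction as the paper, but via a genuinely different mechanism. The paper first invokes the classical fact that the plane $\lspan\{\beta,\gamma\}$ can be $W$-conjugated onto $\lspan(\Gamma)$ for some pair $\Gamma\subseteq\Delta$ of simple roots, and then uses the parabolic decomposition $W = W^\Gamma \cdot W_\Gamma$ together with Lemma~\ref{lem:parabolicpositivity} to see that the sign pattern of $(w(\beta),w(\gamma))$ depends only on the $W_\Gamma$-component; the dihedral check is then left as a direct inspection of the types $A_1\times A_1$, $A_2$, $B_2$, $G_2$. Your route instead decomposes $W$ into left cosets of the reflection subgroup $W_{\beta,\gamma}=\langle s_\beta,s_\gamma\rangle$ and uses that restricting any positive system of $\Phi$ to the dihedral root subsystem $\Phi_0$ yields a positive system of $\Phi_0$, on which $W_{\beta,\gamma}$ acts simply transitively. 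This sidesteps the conjugation-to-parabolic step entirely and is more self-contained; the paper's approach, by contrast, dovetails with the parabolic machinery already developed for Corollary~\ref{cor:Xparabolic} and Theorem~\ref{thm:CovSet}. One point you should make explicit: your assertion that in the acute case $\beta$ and $\gamma$ are \emph{consecutive} among $\rho_0,\dots,\rho_{m-1}$ uses the crystallographic restriction $m\in\{2,3,4,6\}$ in an essential way --- the angle between $\beta$ and $\gamma$ is $k\pi/m$ with $\gcd(k,m)=1$ and $k<m/2$, and only for these values of~$m$ does that force $k=1$ (it would fail for $m=5$).
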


We first show that it is enough to restrict attention to the two-dimensional subspace $\lspan\{\beta, \gamma\}$ of~$V$ and the parabolic subgroup of~$W$ acting on this subspace.
One considers, for a set $\Gamma \subseteq \Delta$ of simple roots, the \Dfn{parabolic subgroup}
\[
W_\Gamma = \langle s_\alpha \mid \alpha \in \Gamma \rangle \leq W
\]
with corresponding root system $\Phi^+_\Gamma = \Phi^+ \cap \, \lspan(\Gamma)$ where the latter is the linear subspace of~$V$ generated by~$\Gamma$.
It is well-known that the parabolic subgroup~$W_\Gamma$ comes with a \Dfn{parabolic quotient} $W^\Gamma$ given by
\[
W^\Gamma = \{ w \in W \mid w(\alpha) \in \Phi^+ \text{ for all } \alpha \in \Gamma \}.
\]
Every element $w \in W$ then has a unique decomposition $w = w^\Gamma\cdot w_\Gamma$ with $w^\Gamma \in W^\Gamma$ and $w_\Gamma \in W_\Gamma$.

\begin{lemma}
  \label{lem:parabolicpositivity}
  Let~$\Gamma \subseteq \Delta$ and let $w \in W$ with parabolic decomposition $w = w^\Gamma \cdot w_\Gamma$.
  For any $\beta \in \Phi^+_\Gamma$, it then holds that
  \[
  w(\beta) \in \Phi^+ \Longleftrightarrow w_\Gamma(\beta) \in \Phi^+.
  \]
\end{lemma}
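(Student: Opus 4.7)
The plan is to exploit the decomposition $w(\beta) = w^\Gamma\big(w_\Gamma(\beta)\big)$ together with two observations: first, that $w_\Gamma$ preserves the root subsystem $\Phi_\Gamma$, and second, that $w^\Gamma$ is sign-preserving on $\Phi^+_\Gamma$.

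First I would note that since $\beta \in \Phi^+_\Gamma \subseteq \lspan(\Gamma)$ and $w_\Gamma \in W_\Gamma = \langle s_\alpha \mid \alpha \in \Gamma\rangle$ acts on $\lspan(\Gamma)$, one has $w_\Gamma(\beta) \in \Phi_\Gamma = \Phi_\Gamma^+ \cup (-\Phi_\Gamma^+)$. So $w_\Gamma(\beta)$ is either a positive root of $\Phi_\Gamma$ or the negative of one, and $w_\Gamma(\beta) \in \Phi^+$ if and only if $w_\Gamma(\beta) \in \Phi^+_\Gamma$.

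The key auxiliary claim I would then establish is that $w^\Gamma(\gamma) \in \Phi^+$ for every $\gamma \in \Phi^+_\Gamma$, not merely for the simple roots in $\Gamma$. To see this, write $\gamma = \sum_{\alpha \in \Gamma} c_\alpha \alpha$ with $c_\alpha \in \ZZ_{\geq 0}$. By linearity, $w^\Gamma(\gamma) = \sum_{\alpha \in \Gamma} c_\alpha\, w^\Gamma(\alpha)$, and each $w^\Gamma(\alpha) \in \Phi^+$ is itself a non-negative integer combination of the simple roots in $\Delta$. Hence $w^\Gamma(\gamma)$ is a non-negative integer combination of elements of $\Delta$; since it is also an element of $\Phi$, it must lie in $\Phi^+$.

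Finally I would combine the two: if $w_\Gamma(\beta) \in \Phi^+_\Gamma$, the auxiliary claim applied to $\gamma = w_\Gamma(\beta)$ gives $w(\beta) = w^\Gamma(w_\Gamma(\beta)) \in \Phi^+$; conversely, if $w_\Gamma(\beta) \in -\Phi^+_\Gamma$, applying the claim to $\gamma = -w_\Gamma(\beta)$ gives $w^\Gamma(-w_\Gamma(\beta)) \in \Phi^+$, so $w(\beta) \in \Phi^-$. The only slightly delicate step is the auxiliary claim, but it is essentially immediate from the definition of $W^\Gamma$ together with the fact that every element of $\Phi^+_\Gamma$ is expressed with non-negative integer coefficients in $\Gamma \subseteq \Delta$.
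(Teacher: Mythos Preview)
Your proof is correct and follows essentially the same approach as the paper: both establish the auxiliary claim $w^\Gamma(\Phi^+_\Gamma) \subseteq \Phi^+$ via the non-negative expansion of roots in $\Phi^+_\Gamma$ over $\Gamma$, then split into the two cases $w_\Gamma(\beta) \in \Phi^+_\Gamma$ and $w_\Gamma(\beta) \in -\Phi^+_\Gamma$, handling the latter by applying the claim to $-w_\Gamma(\beta)$. Your write-up is simply a bit more explicit about why $w_\Gamma(\beta)$ lies in $\Phi_\Gamma$ and why the auxiliary claim extends from $\Gamma$ to all of $\Phi^+_\Gamma$.
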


\begin{proof}
  Since every element in $\Phi^+_\Gamma$ is a non-negative linear combination of elements in~$\Gamma$, the definition of the parabolic quotient immediately implies that $w^\Gamma(\Phi^+_\Gamma) \subseteq \Phi^+$.
  Thus, if $w_\Gamma(\beta) \in \Phi^+$ then it is also in $\Phi^+_\Gamma$ and therefore $w(\beta) = w^\Gamma ( w_\Gamma(\beta))$ is also in~$\Phi^+$.
  If on the other hand, $w_\Gamma(\beta) \notin \Phi^+$, then $w_\Gamma(\beta) \in -\Phi^+_\Gamma$.
  Applying the previous argument to $-w_\Gamma(\beta)$ then shows that $w(\beta) \in -\Phi^+ = \Phi^-$.
\end{proof}

Looking at the definition of the random variable $\X_\beta$ in~\eqref{eq:Xbetadef}, this lemma shows that~ $\X_\beta$ only depends on the smallest parabolic root subsystem containing~$\beta$.

\begin{corollary}
  \label{cor:Xparabolic}
  Let $\Gamma \subseteq \Delta$ with parabolic subgroup $W_\Gamma$ and let $\beta\in\Phi^+_\Gamma$.
  The random variable $\X_\beta$ is the same when considered on~$W$ and when considered on~$W_\Gamma$.
\end{corollary}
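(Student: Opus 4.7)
The plan is to deduce this corollary directly from Lemma~\ref{lem:parabolicpositivity} together with the well-known uniqueness of the parabolic decomposition $w = w^\Gamma \cdot w_\Gamma$ with $w^\Gamma \in W^\Gamma$ and $w_\Gamma \in W_\Gamma$. First, I would unpack the statement: one must show that for $\beta \in \Phi^+_\Gamma$, the probability that $w(\beta) \in \Phi^-$ is the same whether $w$ is drawn uniformly from $W$ or from $W_\Gamma$. The two random variables are defined by the formulas in~\eqref{eq:Xbetadef}, so the statement is equivalent to the numerical identity
\[
\frac{\big|\{w \in W \mid w(\beta) \in \Phi^-\}\big|}{|W|} = \frac{\big|\{v \in W_\Gamma \mid v(\beta) \in \Phi^-\}\big|}{|W_\Gamma|}.
\]

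Next, I would set up the counting. The parabolic decomposition yields a bijection $W \longleftrightarrow W^\Gamma \times W_\Gamma$ sending $w$ to $(w^\Gamma, w_\Gamma)$; in particular $|W| = |W^\Gamma| \cdot |W_\Gamma|$, and the projection $w \mapsto w_\Gamma$ is a $|W^\Gamma|$-to-$1$ map. By Lemma~\ref{lem:parabolicpositivity}, the event $\{w(\beta) \in \Phi^-\}$ depends only on the parabolic factor $w_\Gamma$, that is, it coincides with the preimage under this projection of the set $\{v \in W_\Gamma \mid v(\beta) \in \Phi^-\}$. Therefore
\[
\big|\{w \in W \mid w(\beta) \in \Phi^-\}\big| = |W^\Gamma| \cdot \big|\{v \in W_\Gamma \mid v(\beta) \in \Phi^-\}\big|,
\]
and dividing by $|W| = |W^\Gamma|\cdot|W_\Gamma|$ gives the desired equality.

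There is no genuine obstacle here; the only subtlety is making sure the parabolic decomposition is applied on the correct side, so that Lemma~\ref{lem:parabolicpositivity} (whose hypothesis is precisely $\beta \in \Phi^+_\Gamma$ and $w = w^\Gamma \cdot w_\Gamma$) is applicable. Once that is in place, the argument is a clean fibration count, and the distributional equality of the two versions of $\X_\beta$ follows immediately.
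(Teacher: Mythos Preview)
Your proposal is correct and follows essentially the same argument as the paper: both use the parabolic decomposition $W = W^\Gamma \cdot W_\Gamma$ together with Lemma~\ref{lem:parabolicpositivity} to see that the event $\{w(\beta)\in\Phi^-\}$ depends only on the $W_\Gamma$-factor, and then cancel the common factor $|W^\Gamma|$ in numerator and denominator. The paper's proof is just a terser one-line version of exactly this computation.
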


\begin{proof}
  Since $\X_\beta$ is a Bernoulli random variable, we check that
  \begin{align*}
    \Pro{\X_\beta = 1} &= \frac{\big|\{w \in W \mid w(\beta) \in \Phi^-\}\big|}{|W|} = \frac{|W^\Gamma|\! \cdot\! \big|\{w \in W_\Gamma \mid w(\beta) \in \Phi^-\}\big|}{|W^\Gamma|\cdot|W_\Gamma|} \\
    &= \frac{\big|\{w \in W_\Gamma \mid w(\beta) \in \Phi^-\}\big|}{|W_\Gamma|}\qedhere
  \end{align*}
\end{proof}

\begin{proof}[Proof of Proposition~\ref{prop:Wpartition}]
  The first two properties are implied by the observation that the map $w \mapsto w_\circ\cdot w$ is a bijection on~$W$ interchanging inversions and non-inversions,
  \[
  \Inv(w_\circ\cdot w) = \Phiplus \setminus \Inv(w)\,.
  \]
  Let now be $\lspan\{\beta, \gamma\}$ the linear subspace of~$V$ spanned by the two positive roots~$\beta, \gamma \in \Phi^+$.
  Then it is classical that there exists an element $w \in W$ and simple roots $\Gamma = \{\alpha,\alpha'\} \subseteq \Delta$ such that
  \[
  w\big(\lspan\{\beta,\gamma\}\big) = \lspan(\Gamma).
  \]
  So in particular, $w(\beta),w(\gamma) \in \Phi^+_\Gamma$.
  Since we are only interested in the cardinalities of the four sets and multiplying by~$w$ is a bijection on~$W$, we may assume without loss of generality that $w = e$ and $\beta, \gamma \in \Phi^+_\Gamma$.
  It then holds by Lemma~\ref{lem:parabolicpositivity} that
  \[
  W^{(\varepsilon, \delta)}(\beta, \gamma) = W^\Gamma \times W^{(\varepsilon,\delta)}_\Gamma(\beta,\gamma).
  \]
  This shows that one only needs to check the properties
  \begin{align*}
    \big|W^{(++)}(\beta, \gamma)\big| &= (\ord{(s_\beta}{s_\gamma)}-1)\cdot\big|W^{(+-)}(\beta,\gamma)\big| \text{ if } \langle \beta,\gamma \rangle \geq 0, \\
    \big|W^{(+-)}(\beta, \gamma)\big| &= (\ord{(s_\beta}{s_\gamma)}-1)\cdot\big|W^{(++)}(\beta,\gamma)\big| \text{ if } \langle \beta,\gamma \rangle \leq 0
  \end{align*}
  for the dihedral types $A_1 \times A_1, A_2, B_2, C_2, G_2$ to conclude the proposition.
  This check is an easy exercise, we consider the case of type~$G_2$ in the following example, the others are similar.
\end{proof}

\begin{example}
  Consider the Weyl group~$W$ of type~$G_2$ with simple reflections $\sref=\{s,t\}$.
  The set of reflections is then
  \[
  \rref = \{s, sts, ststs, tstst, tst, t\}
  \]
  which we denote by $s_{\beta_1},\dots,s_{\beta_6}$ in this order with corresponding positive roots $\Phi^+ = \{\beta_1,\dots,\beta_6\}$.
  Inspecting Example~\ref{ex:dihedralrootsystems}, we see that
  \[
  \ord(\beta_i,\beta_j) =
  \begin{cases}
    6 & |i-j| \in \{1,5\},\\
    3 & |i-j| \in \{2,4\},\\
    2 & |i-j|= 3.
  \end{cases}
  \]
  Using
  \[
  \begin{array}{rclcrcl}
    \Inv(e)     &=& \{\}                                        && \Inv(ststst) &=& \{\beta_1,\beta_2,\beta_3,\beta_4,\beta_5,\beta_6\} \\
    \Inv(s)     &=& \{\beta_1\}                                 && \Inv(tstst)  &=& \{\beta_2,\beta_3,\beta_4,\beta_5,\beta_6\} \\
    \Inv(st)    &=& \{\beta_1,\beta_2\}                         && \Inv(tsts)   &=& \{\beta_3,\beta_4,\beta_5,\beta_6\} \\
    \Inv(sts)   &=& \{\beta_1,\beta_2,\beta_3\}                 && \Inv(tst)    &=& \{\beta_4,\beta_5,\beta_6\} \\
    \Inv(stst)  &=& \{\beta_1,\beta_2,\beta_3,\beta_4\}         && \Inv(ts)     &=& \{\beta_5,\beta_6\} \\
    \Inv(ststs) &=& \{\beta_1,\beta_2,\beta_3,\beta_4,\beta_5\} && \Inv(t)      &=& \{\beta_6\} \\
  \end{array}
  \]
  we obtain for example for $\beta_2$ and $\beta_3$ the decomposition
  \begin{align*}
    W^{(++)} &= \{e, s, tst, ts, t\}\,, \\
    W^{(--)} &= \{sts,stst,ststs,ststst=tststs,tstst\}\,, \\
    W^{(+-)} &= \{tsts\}\,, \\
    W^{(-+)} &= \{st\}\,.
  \end{align*}
  We finally observe with $\langle \beta_2, \beta_3\rangle > 0$ that
  \[
  |W^{(++)}| = |W^{(--)}|, \quad |W^{(+-)}| = |W^{(-+)}|, \quad |W^{(++)}| = (\underbrace{\ord(\beta_2\beta_3)}_{=6}-1)\cdot |W^{(+-)}| \,.
  \]
\end{example}

\begin{proof}[Proof of Theorem~\ref{thm:Cov}]
  We calculate
  \begin{align*}
    \Cov(\X_\beta , \X_\gamma)\ &= \Exp{\X_\beta \cdot \X_\gamma} - \Exp{\X_\beta}\cdot \Exp{\X_\gamma} = \Pro{\X_\beta = \X_\gamma=1} - \frac{1}{4}\\ 
    &= \frac{|W^{(--)}(\beta,\gamma)|}{|W|} - \frac{1}{4}\,.
  \end{align*}
  By the first two equalities in Proposition~\ref{prop:Wpartition}, this may be rewritten to
  \[
  \Cov(\X_\beta , \X_\gamma) = \frac{|W^{(--)}(\beta,\gamma)|}{2\cdot \big(|W^{(--)}(\beta,\gamma)|+|W^{(+-)}(\beta,\gamma)|\big)} - \frac{1}{4}
  \]
  If $\beta = \gamma$, we have $W^{(+-)}(\beta,\gamma) = \emptyset$ and obtain $\Cov(\X_\beta,\X_\gamma) = \tfrac{1}{2} - \tfrac{1}{4} = \tfrac{1}{4}$.
  
  \medskip
  
  Let now $\beta \neq \gamma$ and assume $\langle \beta, \gamma \rangle \leq 0$.
  The forth equality in Proposition~\ref{prop:Wpartition} then gives
  \begin{align*}
    \Cov(\X_\beta , \X_\gamma) &= \frac{|W^{(--)}(\beta,\gamma)|}{2\cdot (|W^{(--)}(\beta,\gamma)|+(\ord(\beta,\gamma)-1)\cdot|W^{(--)}(\beta, \gamma)|)} - \frac{1}{4}\\
    &= \frac{1}{2\cdot \ord(\beta,\gamma) } - \frac{1}{4}.
  \end{align*}
  The case of $\langle \beta, \gamma \rangle \geq 0 $ is analogous.
\end{proof}

\begin{proof}[Proof of Theorem~\ref{thm:CovSet}]
  Let $\Psi,\Psi' \subseteq \Phiplus$ be orthogonal sets of roots as given in the assumption.
  As in the proof of Corollary~\ref{cor:Xparabolic}, there exists an element $w \in W$ and simple roots~$\Gamma \subseteq \Delta$ such that
  \[
    w\big(\lspan(\Psi)\big) = \lspan(\Gamma)\,.
  \]
  Since the inner product on~$V$ is $W$-invariant, we moreover have that $w(\Psi')$ is contained in the orthogonal complement of~$\lspan(\Gamma)$.
  Since $W = w\cdot W$, we may assume without loss of generality that~$w = e \in W$ is the identity element.

  We use the unique (set) decomposition $W = W^\Gamma \cdot W_\Gamma$ into a parabolic quotient and the parabolic subgroup generated by~$\Gamma$ to conclude the proof.
  To this end, let $\X = \{\X_\beta\}_{\beta \in \Psi}$ and $\X' = \{\X_\gamma\}_{\gamma \in \Psi'}$.
  We aim to show
  \[
    \Pro{\X' = \delta} = \Pro{\X' = \delta\ |\ \X = \varepsilon }
  \]
  for any fixed outcomes $\varepsilon \in \{ 0,1 \}^{\Psi}, \delta \in \{ 0,1 \}^{\Psi'}$.
  This follows from the observation that the outcome $\X' = \delta$ only depends on the parabolic quotient~$W^\Gamma$ since $W_\Gamma$ pointwise fixes~$\Psi'$, while the outcome $\X = \varepsilon$ only depends on the parabolic subgroup~$W_\Gamma$ by Lemma~\ref{lem:parabolicpositivity}.
  We make this argument precise in the following calculations.
  To simplify notation, we write for $w \in W$ that
  \[
    \varepsilon\cdot w(\Psi) \in \Phi^-\quad \text{ if for all } \beta\in\Psi\text{ we have }
    \begin{cases}
      w(\beta) \in \Phi^-   \text{ if } \varepsilon(\beta) = 1\,, \\
      w(\beta) \in \Phiplus \text{ if } \varepsilon(\beta) = 0\,,
    \end{cases}
  \]
  and we analogously write $\delta\cdot w(\Psi') \in \Phi^-$.
  We then have
  \begin{align*}
    \Pro{\X' = \delta\ |\ \X = \varepsilon }
    &= \frac{\big|\{w \in W \mid \varepsilon\cdot w(\Psi) \in \Phi^-,\ \delta\cdot w(\Psi') \in \Phi^- \}\big|}{\big|\{w \in W \mid \varepsilon\cdot w(\Psi) \in \Phi^- \}\big|} \\[5pt]
    &= \frac{\big|\{w^\Gamma \in W^\Gamma, w_\Gamma \in W_\Gamma \mid \varepsilon\cdot w_\Gamma(\Psi) \in \Phi^-,\ \delta\cdot w^\Gamma(\Psi') \in \Phi^- \}\big|}{|W^\Gamma|\cdot|\{ w_\Gamma \in W_\Gamma \mid \varepsilon\cdot w_\Gamma(\Psi) \in \Phi^-|} \\[5pt]
    &= \frac{\big|\{w \in W^\Gamma \mid \delta\cdot w(\Psi') \in \Phi^- \}\big|}{|W^\Gamma|} \\
    &= \frac{\big|\{w \in W \mid \delta\cdot w(\Psi') \in \Phi^- \}\big|}{|W|} \\
    &= \Pro{\X' = \delta}\,. \qedhere
  \end{align*}
\end{proof}

\section{Central limit theorems}
\label{section:CLT}

Equipped with the necessary Coxeter-theoretic properties from Section~\ref{sec:randominvs}, we use the dependency graph method to prove the claimed central limit theorems, together with bounds on rates of convergence where the used method is applicable.
The computations for $d$-inversions also depend on the concrete calculations of the variances for $d$-inversions given in Section~\ref{sec:concretevariances}.

\subsection{The dependency graph method}
\label{sec:depgraph}

Let $\{ \X^{(n,j)}\}$ with $n \geq 0$ and $1 \leq j \leq k_n$ be a triangular array of random variables.
For fixed~$n$, define the graph~$G_n$ to have vertices $\{1,\dots,k_n\}$ and edges $\{i,j\}$ with $i \neq j$ if $\X^{(n,i)}$ and~$\X^{(n,j)}$ are dependent.
It is called the \Dfn{$n$-th dependency graph} of this triangular array if for any two disjoined subsets $I,J \subseteq \{1,\dots,k_n\}$ without edges between~$I$ and~$J$ in~$G_n$, it holds that also the corresponding sets of random variables are independent.
In this case, we say that the triangular array \Dfn{admits dependency graphs} of \Dfn{dependency degree} $(\delta_n)_{n \geq 0}$ where $\delta_n$ is the maximal vertex degree of~$G_n$.

\medskip

We present the following theorem due to Janson~\cite{janson_1988} in a version tailor made for our situation.

\begin{theorem}[\!\!{\cite[Theorem~2]{janson_1988}}]
  \label{thm:DepGraph}
  Let $\{\X^{(n,j)}\}$ with $n \geq 0$ and $1 \leq j \leq k_n$ be a triangular array of Bernoulli random variables, admitting dependency graphs of dependency degree~$(\delta_n)_{n\geq 0}$.
  Then the sequence $\X^{(n)} = \X^{(n,1)} + \dots + \X^{(n,k_n)}$ is asymptotically normal if there exists an $m \in \NN$ such that
  \[
  \frac{k_n \cdot \delta_n^{m-1}}{\Var{\X^{(n)}}^{m/2}} \longrightarrow 0\,.
  \]
\end{theorem}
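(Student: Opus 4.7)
The plan is to apply the classical method of cumulants, which is Janson's original strategy. I would normalize $Y^{(n)} = (\X^{(n)} - \Exp{\X^{(n)}})/\Var{\X^{(n)}}^{1/2}$; because each $\X^{(n,j)}$ is Bernoulli, every cumulant of $\X^{(n)}$ exists. The standard Gaussian is determined by its moments, so convergence in distribution to $\cN(0,1)$ reduces to showing $\kappa_j(Y^{(n)}) \to 0$ for every fixed $j \geq 3$ (the cumulants of orders $1$ and $2$ being $0$ and $1$ by construction). By homogeneity of cumulants, this amounts to bounding $\kappa_j(\X^{(n)})/\Var{\X^{(n)}}^{j/2}$.

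The combinatorial input is supplied by the dependency graph. Multilinearity of joint cumulants gives $\kappa_j(\X^{(n)}) = \sum_{(i_1,\dots,i_j)} \kappa(\X^{(n,i_1)}, \dots, \X^{(n,i_j)})$, where the sum runs over ordered $j$-tuples of indices. I would invoke the cluster property of joint cumulants: $\kappa(\X^{(n,i_1)}, \dots, \X^{(n,i_j)})$ vanishes whenever $\{i_1,\dots,i_j\}$ can be partitioned into two nonempty parts indexing independent families of variables. By the dependency graph hypothesis, this includes every tuple inducing a disconnected subgraph of $G_n$. A spanning-tree count then bounds the number of surviving connected tuples by $k_n \cdot \delta_n^{j-1}$, while the Bernoulli assumption $|\X^{(n,i)}| \leq 1$ bounds each nonzero joint cumulant by a combinatorial constant $C_j$ depending only on $j$ (via the moment-cumulant inversion formula). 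Together these yield the master estimate
\[
|\kappa_j(Y^{(n)})| \;\leq\; C_j \cdot \frac{k_n \cdot \delta_n^{j-1}}{\Var{\X^{(n)}}^{j/2}}\,.
\]

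The hypothesis is precisely that this bound vanishes for one specific value $j = m$. The step I expect to be the main obstacle is bootstrapping this single vanishing to vanishing for every $j \geq 3$, which is the technical heart of Janson's paper. The standard move is to combine the assumed estimate at $j = m$ with the elementary upper bound $\Var{\X^{(n)}} \leq C' \cdot k_n \cdot \delta_n$, obtained by expanding the variance as a sum of covariances indexed by the vertices and edges of $G_n$. Log-linearity in $j$ of the bounding ratio then propagates the hypothesis to other indices: $j > m$ is handled by an elementary induction using $\delta_n \leq k_n$, while $3 \leq j < m$ is handled by a two-point H\"older-type interpolation between $j = 2$ and $j = m$. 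Once $\kappa_j(Y^{(n)}) \to 0$ has been established for every $j \geq 3$, convergence of all moments of $Y^{(n)}$ to those of $\cN(0,1)$ follows from the inversion formulas between moments and cumulants, and moment-determinacy of the Gaussian upgrades this to convergence in distribution.
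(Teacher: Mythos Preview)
The paper does not prove this statement at all: Theorem~\ref{thm:DepGraph} is quoted verbatim from Janson~\cite{janson_1988} and used as a black box, so there is no proof in the paper to compare against. Your sketch is a faithful outline of Janson's original cumulant argument---the multilinear expansion of $\kappa_j$, the vanishing of joint cumulants on disconnected tuples, the spanning-tree count giving $C_j\,k_n\delta_n^{j-1}$ surviving terms, and the log-linear interpolation in~$j$ between $j=2$ (where $\sigma_n^2\lesssim k_n\delta_n$) and $j=m$ to propagate the hypothesis to all $j\geq 3$---and it is correct as such a sketch.
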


\begin{corollary}[\!\!{\cite[Corollary 4.3 \& Theorem 4.7]{Feray_2019}}]
  \label{cor:rateofconvergence}
  If the assumption in the previous theorem is satisfied for $m = 3$, we obtain a bound for the rate of convergence given by
  \[
  \sup_{x \in \RR}\Big| \Pro{\X^{(n)} \leq x} - \Pro{\cN(0,1) \leq x} \Big| \leq k_n \cdot \delta_n^2 \cdot \Var{\X^{(n)}}^{-3/2} \longrightarrow 0\,.
  \]
\end{corollary}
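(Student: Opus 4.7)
The plan is to view this corollary as a quantitative specialization of the dependency-graph CLT already invoked in Theorem~\ref{thm:DepGraph}, and to derive it by running a Stein-method argument in Kolmogorov distance rather than just checking convergence in distribution. I would proceed in three steps.

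First, I would invoke a Berry-Esseen-type bound for sums with local dependence. In the precise form used in \cite{Feray_2019} (originally going back to Rinott and collaborators), for centered summands $Y_i = \X^{(n,i)} - \Exp{\X^{(n,i)}}$ with dependency graph of maximum degree $\delta_n$, Stein's method with local couplings yields
\[
\dKol\!\left(\frac{\X^{(n)}-\Exp{\X^{(n)}}}{\Var{\X^{(n)}}^{1/2}},\;\cN(0,1)\right) \;\le\; \frac{C\cdot \delta_n^{2}}{\Var{\X^{(n)}}^{3/2}}\sum_{i=1}^{k_n}\Exp{|Y_i|^{3}}
\]
for an absolute constant~$C$. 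The third-moment sum and the $\delta_n^{2}$ factor arise exactly from estimating the Stein-equation remainder across two neighborhoods in the dependency graph; this is the $m=3$ analogue of the moment condition appearing in Theorem~\ref{thm:DepGraph}.

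Second, I would exploit the Bernoulli assumption to simplify the right-hand side. Since each $\X^{(n,i)}\in\{0,1\}$, the centered variable satisfies $|Y_i|\le 1$ almost surely, so $\Exp{|Y_i|^{3}}\le 1$. Summing over $i$ gives $\sum_{i=1}^{k_n}\Exp{|Y_i|^{3}}\le k_n$, and the bound collapses to $C\cdot k_n\cdot \delta_n^{2}\cdot \Var{\X^{(n)}}^{-3/2}$, which is precisely the displayed estimate. The convergence to zero is then immediate: it is the growth condition of Theorem~\ref{thm:DepGraph} specialized to $m=3$, namely $k_n\delta_n^{2}/\Var{\X^{(n)}}^{3/2}\to 0$.

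The main (and really only) obstacle is notational: one must verify that the constant $C$ is genuinely absolute and that the cubic version of the Stein bound applies without extra moment hypotheses beyond the Bernoulli assumption. Both points are handled in the cited reference, which is why the corollary can be used as an off-the-shelf tool throughout Section~\ref{section:CLT}; all subsequent applications for $d$-descents and $d$-inversions then reduce to producing an upper bound on $\delta_n$ via Theorem~\ref{thm:CovSet} and a matching lower bound on $\Var{\X^{(n)}}$ via the variance computations of Section~\ref{sec:concretevariances}.
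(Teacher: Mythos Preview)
The paper does not provide its own proof of this corollary: it is stated as a direct citation of \cite[Corollary~4.3 \& Theorem~4.7]{Feray_2019} and then used as a black box throughout Section~\ref{section:CLT}. There is therefore no argument in the paper to compare your proposal against.

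That said, your sketch is a faithful outline of how such Berry--Esseen bounds for dependency graphs are actually established in the literature (Rinott, Chen--Shao, and the mod-$\phi$ framework of F\'eray--M\'eliot--Nikeghbali): Stein's method with local couplings produces a Kolmogorov-distance bound of the form $C\,\delta_n^2\,\Var{\X^{(n)}}^{-3/2}\sum_i \Exp{|Y_i|^3}$, and the Bernoulli hypothesis collapses the third-moment sum to $k_n$. Your remark that the only subtlety is the universality of the constant~$C$ is also accurate; this is precisely what the cited reference supplies. So your proposal is correct in substance, but goes beyond what the paper itself does---the paper simply imports the statement.
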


We use Theorem~\ref{thm:DepGraph} to prove asymptotic normality for antichains in Theorem~\ref{thm:antichain}, directly implying the main result for $d$-descents in Corollary~\ref{cor:gendesc}.
Using then the concrete variances for $d$-inversions, we prove the second main result for $d$-inversions stated in Theorem~\ref{thm:geninv}.

\medskip

We close this section with introducing three notions for later usage.
For functions $f,g: \NN_{+} \to \RR_{\ge 0}$, we write $f \lesssim g$, if there exists $\varepsilon > 0$ and an~$N \in \NN$ such that for all $n \geq N$, it holds that $f(n) \leq \varepsilon g(n)$. 
We moreover write $f \ll g $, if for all $\varepsilon > 0$ there exists an~$N \in \NN$ with this property. Observe that $f \ll g \Leftrightarrow f(n)/g(n) \longrightarrow 0\,.$
We finally write
\begin{center}
  $f \approx g$ for $f \lesssim g \lesssim f$.
\end{center}

\subsection{Central limit theorems for antichains in root posets and $d$-descents}

Let $\{\Phi^{(n)}\}_{n \geq 1}$ be a sequence of finite crystallographic root systems and let $\{\Psi^{(n)}\}_{n \geq 1}$ be a sequence of antichains in the respective root posets.
We consider the random variable $\X_{\Psi^{(n)}}$ as given in~\eqref{def:RV} and give two proofs of its asymptotic normality.
We use the dependency graph method above and then remark that one may also instead rely on the result for descents in~\cite{Kahle_2019}.
The dependency graph method yields in addition the proposed bound on the rate of convergence.

\medskip

We start with collecting the following properties.

\begin{lemma}
  \label{lem:sommers}
  Let $\Psi \subset \Phiplus$ be an antichain.
  Then
  \begin{itemize}
    \item $\langle \beta, \gamma\rangle \leq 0$ for all $\beta \neq \gamma$ in $\Psi$,
    \item there are at most $|\Psi|-1$ many pairs of dependent random variables $\X_\beta,\X_\gamma$ for $\beta,\gamma \in \Psi$, and
    \item the dependency degree $\delta$ is at most~$3$.
  \end{itemize}
\end{lemma}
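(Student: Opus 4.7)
All three statements follow from one structural principle: an antichain in the root poset of a crystallographic root system behaves like a simple system of the sub-root-system it spans, so the graph of non-orthogonal pairs in $\Psi$ is exactly the Coxeter (Dynkin) diagram of that subsystem.

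For the first bullet, I would argue by contradiction. Assume $\beta \neq \gamma$ in $\Psi$ satisfy $\langle \beta, \gamma \rangle > 0$. Since $\beta, \gamma \in \Phiplus$ we have $\beta \neq -\gamma$, and the classical fact from root system theory gives $\beta - \gamma \in \Phi$. Whichever of $\beta - \gamma$ and $\gamma - \beta$ is positive is a non-negative integral combination of simple roots, and hence establishes a strict relation between $\beta$ and $\gamma$ in the root poset, contradicting the antichain hypothesis.

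For the second bullet, the first combined with a standard Gram-matrix argument shows $\Psi$ is linearly independent: for a putative relation $\sum c_i \beta_i = 0$, separate positive and negative coefficients, take the norm squared of the common vector to force it to vanish by non-positivity of pairwise inner products, and then evaluate against any vector with strictly positive inner product on $\Phiplus$ to eliminate the remaining coefficients. Therefore $\Psi$ is a simple system of the crystallographic sub-root-system $\Phi \cap \lspan(\Psi)$. By Theorem~\ref{thm:Cov}, two indicator variables $\X_\beta, \X_\gamma$ are dependent precisely when $\beta$ and $\gamma$ are not orthogonal, so the dependency graph on $\Psi$ coincides with the Coxeter graph of this subsystem. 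Since the Coxeter graph of each irreducible crystallographic root system (types $A, B, C, D, E, F, G$) is a tree, the dependency graph is a forest on $|\Psi|$ vertices and hence has at most $|\Psi| - 1$ edges.

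For the third bullet, the classification of crystallographic Dynkin diagrams shows the maximum vertex degree in any such diagram is $3$, attained at the branching vertex of types $D$ and $E$; the dependency graph, being a disjoint union of such diagrams over the irreducible components of $\Phi \cap \lspan(\Psi)$, therefore has maximum vertex degree at most $3$. The main obstacle I anticipate is making the linear-independence and simple-system identifications precise without a long digression into general root-system theory; once these are in hand, the remainder reduces cleanly to the well-known classification of crystallographic Dynkin diagrams.
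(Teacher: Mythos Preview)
Your approach differs from the paper's, which simply cites Sommers' theorem that every antichain in the root poset is $W$-conjugate to a subset of the simple roots~$\Delta$; all three bullets then follow at once from the corresponding facts for simple roots together with $W$-invariance. Your direct argument for the first bullet is a clean alternative that avoids this citation.

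For the second and third bullets, however, there is a genuine gap in the step ``Therefore $\Psi$ is a simple system of the crystallographic sub-root-system $\Phi \cap \lspan(\Psi)$.'' Linear independence together with pairwise non-positive inner products does \emph{not} by itself force $\Psi$ to be a simple system of the subsystem it spans. For instance, in type~$B_2$ the positive roots $e_1$ and $e_2$ are linearly independent and orthogonal, yet $\{e_1,e_2\}$ is not a simple system of $\Phi \cap \lspan\{e_1,e_2\} = \Phi$, since $e_2 - e_1$ is a root with mixed-sign coefficients in this basis. Of course $\{e_1,e_2\}$ is not an antichain, so this does not contradict the lemma---but your justification of this step used only linear independence and non-positive inner products, not the antichain hypothesis. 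You can repair the argument in two ways: either invoke Sommers' theorem at precisely this point (which is what the paper does), or bypass the ``simple system of $\Phi\cap\lspan(\Psi)$'' claim altogether and argue directly from positive definiteness of the normalized Gram matrix of~$\Psi$. Its off-diagonal entries lie in $\{0,-\tfrac12,-\tfrac{1}{\sqrt2},-\tfrac{\sqrt3}{2}\}$ by the crystallographic angle restriction, so the standard classification of positive-definite Coxeter matrices shows the non-orthogonality graph is a disjoint union of finite-type Coxeter diagrams, hence a forest with maximum vertex degree at most~$3$. This second route realises your stated strategy without the unjustified identification.
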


\begin{proof}
  All properties are clearly satisfied if $\Psi \subset \Delta$ is a set of simple roots:
  Any two simple roots have inner product less than or equal to zero and the dependency graph of $\X_\Psi$ for $\Psi \subset \Delta$ equals the Coxeter graph of the parabolic subsystem generated by~$\Psi$.
  Since these properties are also invariant under the group action of~$W$ on~$\Phi$, the conclusions follow from~\cite{Sommers_2005} where it was shown that any antichain in the root poset may be send to simple roots by an element in~$W$.
\end{proof}

\begin{proposition}
  \label{prop:VarCard}
  The variance of $\X_{\Psi^{(n)}}$  grows linearly with the cardinality of the antichain, \ie,
  \[
  \Var{\X_{\Psi^{(n)}}} \approx |\Psi^{(n)}|.
  \]
\end{proposition}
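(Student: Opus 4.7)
The plan is to establish matching linear upper and lower bounds on $\Var{\X_{\Psi^{(n)}}}$. For the upper bound: each $\X_\beta$ is Bernoulli$(\tfrac{1}{2})$, since the bijection $w \mapsto w_\circ w$ on $W^{(n)}$ interchanges $\{w : w(\beta) \in \Phi^+\}$ and $\{w : w(\beta) \in \Phi^-\}$, so $\Var{\X_\beta} = \tfrac{1}{4}$. By Lemma~\ref{lem:sommers}, any two distinct roots in an antichain have nonpositive inner product, so Theorem~\ref{thm:Cov} gives $\Cov(\X_\beta, \X_\gamma) \leq 0$ for all such pairs. Bilinear expansion of the variance then yields $\Var{\X_{\Psi^{(n)}}} \leq |\Psi^{(n)}|/4$.

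For the lower bound, I reduce to the case of a set of simple roots. By Sommers' theorem (the ingredient behind Lemma~\ref{lem:sommers}) there exists $w \in W^{(n)}$ with $w(\Psi^{(n)}) \subseteq \Delta^{(n)}$; since left multiplication by $w$ is a bijection on $W^{(n)}$, the joint distribution of $\{\X_\beta\}_{\beta\in\Psi^{(n)}}$ agrees with that of $\{\X_{w(\beta)}\}_{\beta\in\Psi^{(n)}}$, so one may assume $\Psi^{(n)} \subseteq \Delta^{(n)}$. The uniform parabolic set-factorization $W^{(n)} = W^{\Psi^{(n)}} \cdot W_{\Psi^{(n)}}$ together with Lemma~\ref{lem:parabolicpositivity} then shows that this joint distribution agrees with its restriction to $W_{\Psi^{(n)}}$, on which $\X_{\Psi^{(n)}}$ is precisely the descent statistic.

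Decompose $\Psi^{(n)} = \bigsqcup_i \Psi_i^{(n)}$ into the connected components of the Coxeter diagram of $W_{\Psi^{(n)}}$. Distinct components are pairwise orthogonal, so Theorem~\ref{thm:CovSet} yields $\Var{\X_{\Psi^{(n)}}} = \sum_i \Var{\X_{\Psi_i^{(n)}}}$, each summand being the descent variance of an irreducible finite crystallographic Weyl group of rank $|\Psi_i^{(n)}|$. For the classical types these variances will be computed in Theorems~\ref{thm:VarA}--\ref{thm:VarD} and are $\Theta(\text{rank})$ with a ratio bounded uniformly away from zero, while the five exceptional crystallographic types $E_6,E_7,E_8,F_4,G_2$ contribute only finitely many positive values. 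Taking an infimum yields a universal $c > 0$ with $\Var{\X_{\Psi_i^{(n)}}} \geq c\,|\Psi_i^{(n)}|$ for every component, and summing gives $\Var{\X_{\Psi^{(n)}}} \geq c\,|\Psi^{(n)}|$.

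The principal obstacle is precisely this uniform linear lower bound: a term-by-term estimate of covariances via Theorem~\ref{thm:Cov} is too crude, since an individual $|\Cov|$ can reach $\tfrac{1}{6}$ in $G_2$-like pairs and the number of dependent pairs can be comparable to $|\Psi^{(n)}|$, so that naive bounds turn negative once $|\Psi^{(n)}|$ is moderate. The parabolic reduction and the independence across orthogonal components from Theorem~\ref{thm:CovSet} are therefore essential.
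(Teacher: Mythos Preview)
Your argument is correct, but the route differs from the paper's. After the same upper bound, you reduce via Sommers and parabolic restriction to the descent statistic on the parabolic subgroup $W_{\Psi^{(n)}}$, split into irreducible factors via Theorem~\ref{thm:CovSet}, and then invoke the explicit descent variances of Theorems~\ref{thm:VarA}--\ref{thm:VarD} (plus a finite check for the exceptional types) to extract a universal linear lower bound. The paper instead stays with the bilinear expansion you already used for the upper bound and estimates the covariance sum directly: once $\Psi^{(n)}$ is identified with simple roots and decomposed into irreducible components $\Delta_i$, each $\Delta_i$ contributes exactly $2|\Delta_i|-2$ ordered dependent pairs, of which at most two have $\ord > 3$; bounding those two by $1-2/6=2/3$ and the remaining ones by $1-2/3=1/3$ yields $\sum_{\beta\neq\gamma}(1-2/\ord(\beta,\gamma)) \le \tfrac{2}{3}|\Psi^{(n)}|$, hence $\Var{\X_{\Psi^{(n)}}}\ge \tfrac{1}{12}|\Psi^{(n)}|$. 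So your final paragraph is off the mark: the ``term-by-term estimate via Theorem~\ref{thm:Cov}'' you dismiss as too crude is precisely what the paper carries out---the only extra input is that an irreducible Dynkin diagram has at most one multiple edge. The paper's argument is self-contained, delivers the explicit constant $1/12$, and needs neither Theorem~\ref{thm:CovSet} nor the forward reference to the appendix; your approach is modular but imports those heavier ingredients. (A small aside: the bijection identifying the joint laws of $\{\X_\beta\}$ and $\{\X_{w(\beta)}\}$ is right multiplication by~$w$, since $\X_{w(\beta)}(v)=\X_\beta(vw)$; left multiplication does not do this.)
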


\begin{proof}
  Together with Lemma~\ref{lem:sommers}, it follows from Theorem~\ref{thm:Cov} that we may rewrite $\Var{\X_{\Psi^{(n)}}}$ as
  \[
    \Var{\X_{\Psi^{(n)}}} = \sum \limits_{\beta \in \Psi^{(n)}} \Var{\X_\beta} + \sum\limits_{\substack{\beta, \gamma \in \Psi^{(n)} \\ \beta \neq \gamma}} \Cov(\X_\beta, \X_\gamma)
    =  \frac{1}{4} \left(|\Psi^{(n)}| - \sum\limits_{\substack{\beta, \gamma \in \Psi^{(n)} \\ \beta \neq \gamma}} \Big( 1 - \frac{2}{\ord({\beta},{\gamma})}\Big)\right).
  \]
  With $\ord(\beta,\gamma) \in \{2,3,4,6\}$, we immediately obtain $\Var{\X_{\Psi^{(n)}}} \leq \frac{|\Psi^{(n)}|}{4}$.
  We aim to also show the lower bound $\Var{\X_{\Psi^{(n)}}} \geq \frac{|\Psi^{(n)}|}{12}$ by showing that
  \[
    \sum\limits_{\substack{\beta, \gamma \in \Psi^{(n)} \\ \beta \neq \gamma}} \Big( 1 - \frac{2}{\ord({\beta},{\gamma})}\Big) \leq \frac{2}{3}|\Psi^{(n)}|\,.
  \]
  It follows, as in the proof of Lemma~\ref{lem:sommers}, from~\cite{Sommers_2005} that $\Psi^{(n)}$ are the simple roots of a parabolic subsystem.
  We may thus decompose $\Psi^{(n)} = \Delta_1 \cup \dots \cup \Delta_k$ into its irreducible components, meaning that $\Delta_i$ contains the simple roots of on irreducible root system.
  We therefore have $\ord(\beta,\gamma) = 2$ for~$\beta \in \Delta_i$ and~$\gamma \in \Delta_j$ with $i \neq j$, and also that for each~$\Delta_i$, there are exactly $2|\Delta_i|-2$ many pairs $(\beta,\gamma) \in \Delta_i^2$ with $\ord(\beta,\gamma) \geq 3$ and for at most two of them, say $(\beta,\gamma)$ and $(\gamma,\beta)$, we have $\ord(\beta,\gamma) = \ord(\gamma,\beta) > 3$.
  We then obtain
  \[
    \sum\limits_{\substack{\beta, \gamma \in \Psi^{(n)} \\ \beta \neq \gamma}} \Big( 1 - \frac{2}{\ord({\beta},{\gamma})}\Big)
    = \sum_{i=1}^k \sum\limits_{\substack{\beta, \gamma \in \Delta_i \\ \beta \neq \gamma}} \Big( 1 - \frac{2}{\ord({\beta},{\gamma})}\Big)
    \leq \sum_{i=1}^k \left( \frac{2}{3}\big(|\Delta_i|-2\big)+\frac{4}{3}\right) = \frac{2}{3}|\Psi^{(n)}|\,,
  \]
  as desired.
\end{proof}

\begin{proof}[Proof of Theorem~\ref{thm:antichain}]
  The random variable $\X_{\Psi^{(n)}}$ is a sum of Bernoulli random variables.
  We have seen in Theorem~\ref{thm:CovSet} that this is a triangular array of random variables admits dependency graphs.
  Lemma~\ref{lem:sommers} showed that its dependency degree $\delta_n$ is globally bounded by~$3$.
  We have moreover seen in Proposition~\ref{prop:VarCard} that the variance of $\X_{\Psi^{(n)}}$ grows linearly with $k_n = |\Psi^{(n)}|$.
  We therefore obtain that
  \[
  \frac{k_n \cdot \delta_n^{m-1}}{\Var{\X_{\Psi^{(n)}}}^{m/2}} \approx \big|\Psi^{(n)}\big|^{\tfrac{2-m}{2}} \longrightarrow 0
  \]
  for any $m \geq 3$.
  $\X_{\Psi^{(n)}}$ is thus asymptotically normal by Theorem~\ref{thm:DepGraph} and the bound on the rate of convergence is $|\Psi^{(n)}|^{-1/2}$ as given by Corollary~\ref{cor:rateofconvergence}.
\end{proof}

Without the rate of convergence, we may as well more easily reduce the situation of Theorem~\ref{thm:antichain} to previously known results.

\begin{remark}
  \label{rem:alternativeproof}
  One may as well derive the asymptotic normality as claimed in Theorem~\ref{thm:antichain} from the asymptotic normality in~\cite{Kahle_2019} as follows.
  By~\cite{Sommers_2005}, any antichain in the root poset may be send to simple roots by an element in~$W$.
  We thus reduce the situation of $\X_\Psi^{(n)}$ to the case that $\Psi^{(n)} \subseteq \Delta^{(n)}$ for the simple roots $\Delta^{(n)}$ of the root system $\Phi^{(n)}$.
  We have moreover seen in Corollary~\ref{cor:Xparabolic} that $\X_\Psi^{(n)}$ may equivalently be considered inside $W_{\Delta^{(n)}}$, and this case is treated in~\cite[Theorem~6.2]{Kahle_2019}.
  The approach used here uses different techniques and in particular yields the claimed rate of convergence.
\end{remark}

\subsection{Central limit theorems for $d$-inversions}
\label{sec:cltdinv}

Let $\{\Phi^{(n)}\}_{n \geq 1}$ be a sequence of finite crystallographic root systems and let $\{d_n\}_{n\geq 1}$ a sequence of integer.
We consider the random variable
\[
\X_{\Phidinv{d_n}} = \sum_{\beta \in \Phidinv{d_n}} \X_\beta\quad\text{with}\quad \Phidinv{d_n} = \{\beta \in \Phiplus \mid \height{\beta}\leq d_n\}
\]
and prove its asymptotic normality using the dependency graph method.
We in addition provide bounds on the rate of convergence where the method is applicable.
Based on the concrete calculations in Section~\ref{sec:concretevariances}, we start with the following lemma giving lower bounds for the variance depending on the situation.

\begin{lemma}
  \label{lem:varlowerbound}
  There exists a global constant $\varepsilon > 0$ such that for any irreducible root system $\Phi$ of rank~$r$ and any parameter~$d$, we have
  \[
  \Var{\X_{\Phidinv{d+1}}} \geq \Var{\X_{\Phidinv{d}}} >  \begin{cases}
    \varepsilon\cdot r^3      &\text{if } \hspace{6pt}  r \leq d\,,\\[5pt]
    \varepsilon\cdot d^3      &\text{if } \hspace{5pt} d \leq r \leq d^2\,, \\[5pt]
    \varepsilon\cdot r\cdot d &\text{if } \hspace{5pt} d^2 \leq r \,.
  \end{cases}
  \]
\end{lemma}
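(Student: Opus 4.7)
The plan is to combine the explicit variance formulas for the four classical types (Theorems~\ref{thm:VarA}, \ref{thm:VarB}, \ref{thm:VarC}, and~\ref{thm:VarD}, proved in Section~\ref{sec:concretevariances}) with a reduction that disposes of exceptional types trivially. I will first observe that finite irreducible crystallographic root systems comprise the four infinite families $A_r, B_r, C_r, D_r$ together with the finitely many exceptional types $E_6, E_7, E_8, F_4, G_2$. For the exceptional types, both $r$ and $d$ are bounded by a universal constant, so all three inequalities in the lemma can be satisfied simultaneously by picking $\varepsilon$ small enough; here one only uses that $\Var{\X_{\Phidinv{d}}}$ is strictly positive whenever $\Phidinv{d}$ is nonempty, which follows directly from Theorem~\ref{thm:Cov} and the Cauchy–Schwarz-type argument on the individual Bernoulli contributions.

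Next I would address the monotonicity statement $\Var{\X_{\Phidinv{d+1}}} \geq \Var{\X_{\Phidinv{d}}}$. Writing $\Phidinv{d+1} = \Phidinv{d} \sqcup \Phiddes{d+1}$, this reduces to
\[
\Var{\X_{\Phiddes{d+1}}} + 2\!\!\sum_{\substack{\beta \in \Phidinv{d}\\ \gamma \in \Phiddes{d+1}}} \!\!\Cov(\X_\beta,\X_\gamma) \geq 0.
\]
Using Theorem~\ref{thm:Cov}, the covariances $\Cov(\X_\beta, \X_\gamma)$ lie in $[-\tfrac14, \tfrac14]$ with magnitude bounded by $\tfrac{1}{4}-\tfrac{1}{2\ord(\beta,\gamma)}$, and for a fixed $\gamma \in \Phiddes{d+1}$ only finitely many $\beta$ (those that are not orthogonal to $\gamma$) contribute. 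For the classical families the monotonicity can alternatively be read off directly from the closed-form variance expressions of Section~\ref{sec:concretevariances}, while for the exceptional types monotonicity holds by finite inspection.

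For the main lower bound, I would plug $r$ and $d$ into the explicit formulas established in Theorems~\ref{thm:VarA}–\ref{thm:VarD}. In each classical type the variance of $\X_{\Phidinv{d}}$ is a low-degree polynomial expression in $r$ and $d$ (involving truncation terms that kick in at the various ranges of $r/d$). In the regime $r \leq d$, the ideal $\Phidinv{d}$ exhausts or nearly exhausts $\Phiplus$, and the leading term is cubic in $r$, giving the $\varepsilon r^3$ bound that recovers the classical variance of the full inversion statistic. In the intermediate regime $d \leq r \leq d^2$ the formula is dominated by a $d^3$ term (matching the known bound for $\sigma$-statistics computed in~\cite{bona_2008, Pike_2011} in type $A$). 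In the regime $d^2 \leq r$ the polynomial is dominated by a linear-in-$r$ term multiplied by $d$, yielding $\varepsilon \cdot rd$. Since the four classical families differ from type $A$ only by factors and additive corrections of strictly smaller order in $r$ and $d$, one uniform constant $\varepsilon > 0$ can be chosen to handle all four simultaneously.

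The main obstacle I anticipate is the bookkeeping at the transition points $r \approx d$ and $r \approx d^2$, where one must verify that the claimed leading term really dominates the closed-form polynomial and produces a single constant $\varepsilon > 0$ valid uniformly in $r$ and $d$ across all four classical families. The exceptional types, the monotonicity, and the choice of $\varepsilon$ for small $r$ or $d$ are routine; the genuine work is extracting sharp leading-order estimates from the four explicit variance formulas and glueing them at the thresholds.
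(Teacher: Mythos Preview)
Your overall strategy is correct and matches the paper's: rely on the explicit variance polynomials of Theorems~\ref{thm:VarA}--\ref{thm:VarD} for the classical families and absorb the finitely many exceptional types into the constant~$\varepsilon$. Two points of comparison are worth noting.

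First, your initial monotonicity argument via $\Var{\X_{\Phiddes{d+1}}} + 2\sum \Cov(\X_\beta,\X_\gamma) \geq 0$ does not go through as stated: for a fixed $\gamma \in \Phiddes{d+1}$ there are on the order of~$d$ non-orthogonal $\beta \in \Phidinv{d}$, each contributing a covariance that may equal $-\tfrac{1}{12}$, so the cross term can swamp the $\tfrac{1}{4}$ from $\Var{\X_\gamma}$. Your fallback---reading monotonicity off the closed formulas---is exactly what the paper does, so this is not a genuine gap, only a dead end you correctly hedged against.

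Second, and more interestingly, the paper avoids the transition-point bookkeeping you flag as the main obstacle. Rather than analyse the variance polynomials in each of the ranges $r \leq d$, $d \leq r \leq d^2$, $d^2 \leq r$ separately, the paper exploits monotonicity once more: since $\Var{\X_{\Phidinv{d}}} \geq \Var{\X_{\Phidinv{\min(d,r/2)}}}$, one may always replace~$d$ by~$\min(d,r/2)$ and thus work exclusively in the regime $2d \leq r$, where the polynomials take their simplest form (a single line in each of Theorems~\ref{thm:VarA}--\ref{thm:VarD}). All three cases of the lemma then reduce to inspecting that one simple polynomial. Your direct approach would instead require checking, for instance in types $B$, $C$, $D$ with $r \leq d < 2r-1$, that the more complicated polynomial in the range $n \leq d$ still dominates~$r^3$; this is doable but is precisely the gluing work you anticipated, and the paper's reduction sidesteps it entirely.
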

\begin{proof}
  The first property that the variance (weakly) increases in~$d$ can be checked by observing that the derivative in~$d$ of the involved polynomials---see Section~\ref{sec:concretevariances}---is non-negative, or by concretely calculating the difference between two variances for consecutive parameters in~$d$.
  Since these calculations are not enlightening, we leave the details to the reader.
  
  \medskip
  
  The inequality $\Var{\X_{\Phidinv{d+1}}} \geq \Var{\X_{\Phidinv{d}}}$ may be used to obtain
  \[
  \Var{\X_{\Phidinv{d}}} \geq \Var{\X_{\Phidinv{k}}}
  \]
  where $k = \min\{ d, r/2\}$.
  We therefore only need to treat the concrete variances for $2d \leq r$.
  
  The first case is $r \leq d$ and we obtain
  \[
  \Var{\X_{\Phidinv{d}}} \geq \Var{\X_{\Phidinv{r/2}}} > \varepsilon \cdot r^3 \, .
  \]
  The second case $d \leq r \leq d^2$ is split into two subcases.
  If $d \leq r/2$, we directly have
  \[
  \Var{\X_{\Phidinv{d}}} > \varepsilon \cdot d^3
  \]
  and if $d \geq r/2$, we have
  \[
  \Var{\X_{\Phidinv{d}}} \geq \Var{\X_{\Phidinv{r/2}}} > \varepsilon \cdot r^3 \geq \varepsilon \cdot d^3\,.
  \]
  In the final case $d^2 \leq r$, we have $2d \leq d^2 \leq r$ (the case $d=1$ also fulfills the bound trivially).
  Thus,
  \[
  \Var{\X_{\Phidinv{d}}} > \max{(rd, d^3)} = rd \, .
  \]
  where we used that in all classical types, there are terms~$rd$ and~$d^3$ with positive coefficients in the polynomial expressions for $2d \leq r$.
  All involved parameters~$\varepsilon$ may a priori differ, but---after also taking the finite number of exceptional groups into account---we may choose one global such~$\varepsilon$.
\end{proof}

We decompose a given root system $\Phi = \Phi^{(n)}$ depending on the parameter $d = d_n$ into its irreducible components according to Lemma~\ref{lem:varlowerbound} as
\[
\Phi = \Phi_A \cup \Phi_B \cup \Phi_C = \bigcup_{i \in A}\Phi_i\ \cup\ \bigcup_{i \in B} \Phi_i\ \cup\ \bigcup_{i \in C} \Phi_i
\]
where we have that different irreducible components $\Phi_i$ and $\Phi_j$ for $i \neq j$ live in orthogonal subspaces.
We moreover denote their respective ranks $r_i = \rk(\Phi_i)$ and the indices are decomposed as
\[
A = \{ i \mid r_i < d \}, \quad B = \{ i \mid d \leq r_i \leq d^2\},\quad C = \{ i \mid d^2 < r_i\} \,.
\]
We set $r_A = \sum_{i \in A} r_i = \rk(\Phi_A)$ and $r_B$ and $r_C$ analogously, and observe that $r_n = \rk(\Phi^{(n)}) = r_A + r_B + r_C$.

\medskip

In the considered setup, we assume $|\Phidinv{d_n}| \longrightarrow \infty$.
This implies that also $r_n = \rk(\Phi^{(n)}) \longrightarrow \infty$.
We moreover assume without loss of generality that there exists positive roots for $\Phi^{(n)}$ of height~$d_n$ in the root poset.
Otherwise $\Phidinv{d_n} = \Phidinv{d_n-1}$ and we may replace~$d_n$ by $d_n-1$ without modifying the considered random variable.
The root poset of $\Phi^{(n)}$ consists of the disjoint union of the root posets (in the sense that their Hasse diagrams are disjoint) of its irreducible components $\{ \Phi^{(n)}_i \mid i \in A \cup B \cup C\}$.
We finally denote by $\delta_n$ the $n$-th dependency degree.

\begin{proposition}
  \label{prop:order of kn deltan}
  We have
  \[
  |\Phidinv{d_n}| \lesssim r_n\cdot d_n\quad\text{and}\quad\delta_n \lesssim d_n \,.
  \]
\end{proposition}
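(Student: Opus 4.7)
The plan is to prove the two bounds separately, each by decomposing $\Phi = \Phi^{(n)}$ into its irreducible components $\{\Phi_i\}$ of ranks $r_i$ (so $r_n = \sum_i r_i$) and working inside each component.

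For the first bound $|\Phidinv{d_n}| \lesssim r_n \cdot d_n$, I will invoke the classical fact that in any irreducible finite crystallographic root system of rank $r$, the multiplicities $m_k = \big|\{\beta \in \Phi^+ \mid \height(\beta) = k\}\big|$ form a weakly decreasing sequence in $k$ (this follows from the standard partition-theoretic description of level sets of the root poset as the conjugate of the exponent sequence). In particular $m_k \leq m_1 = r$ for every $k$. Summing over $1 \leq k \leq d_n$ inside each component gives
\[
\big|\Phi_i^+ \cap \Phidinv{d_n}\big| \;=\; \sum_{k=1}^{d_n} m_k^{(i)} \;\leq\; r_i \cdot d_n,
\]
where terms with $k$ exceeding the height of the root poset of $\Phi_i$ vanish automatically. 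Summing over $i$ then yields $|\Phidinv{d_n}| \leq r_n \cdot d_n$.

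For the second bound $\delta_n \lesssim d_n$, I would first observe that distinct irreducible components live in mutually orthogonal subspaces of the ambient Euclidean space, so roots from different components are orthogonal. Combined with Theorem~\ref{thm:CovSet}, this shows that the dependency graph of $\{\X_\beta\}_{\beta \in \Phidinv{d_n}}$ decomposes as a disjoint union over components, so $\delta_n$ is the maximum, over $\Phi_i$ and over $\beta \in \Phi_i^+ \cap \Phidinv{d_n}$, of the number of other roots $\gamma$ in the same set with $\langle \beta, \gamma \rangle \neq 0$. The finitely many exceptional irreducible components have globally bounded rank and contribute only an absolute constant. For the classical types $A, B, C, D$, every positive root involves at most two of the coordinates $e_i$ in the realizations of Examples~\ref{ex:A} and~\ref{ex:B} (and analogously for $D$), and two roots are non-orthogonal exactly when they share a coordinate. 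A short case check in each classical type shows that for any fixed coordinate $e_{i_0}$ there are at most $O(d_n)$ positive roots of height at most $d_n$ involving $e_{i_0}$, which gives $\delta_n \leq C \cdot d_n$.

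The only real obstacle is this last step: a routine but type-by-type enumeration confirming that the count of height-$\leq d_n$ roots through any fixed coordinate is $O(d_n)$ in each of types $A, B, C, D$. I do not see a uniform type-free argument that avoids this case check, but in each individual type the enumeration is elementary and the resulting constants can be absorbed into the $\lesssim$.
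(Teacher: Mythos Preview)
Your proposal is correct and follows essentially the same approach as the paper: for the first bound you use that each height level has at most $r$ roots (the paper states this directly, without invoking the stronger weak-monotonicity fact), and for the second bound you argue exactly as the paper does---exceptional components contribute a global constant, while in classical types two roots are non-orthogonal iff they share a coordinate index, yielding an $O(d_n)$ count per root (the paper makes this explicit as ``at most $4d_n$'').
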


\begin{proof}
  The first property follows from the observation that the number of elements of a given height in the root poset is bounded by the rank.
  
  For the second, let $\beta \in \Phidinv{d_n}$.
  We first discuss the situation that~$\beta$ is in an irreducible component of one of the classical types~$A$--$D$.
  In these cases, we have seen that~$\beta$ is of one of the respective forms $e_i - e_j$, $e_i$, $e_i+e_j$ and a positive root~$\gamma$ is not orthogonal to~$\beta$ if and only if~$\gamma$ lives in the same irreducible component (so $\gamma$ is of one the respective forms $e_k -e_\ell$, $e_k$, $e_k+e_\ell$) and~$\beta$ and $\gamma$ share one of the used indices.
  This implies that the number of roots not orthogonal to~$\beta$ is at most $4d_n$.
  On the other hand, if~$\beta$ is in an irreducible component of exceptional type, then the number of roots not orthogonal to~$\beta$ is globally bounded.
\end{proof}

\begin{proof}[Proof of Theorem~\ref{thm:geninv} \& Corollary~\ref{cor:geninv}]
  We write $\sigma_n^2 = \Var{\X_{\Phidinv{d_n}}}$ and analogously $\sigma_i^2$ for the variance of the corresponding random variable of the irreducible component $\Phi_i$ and decompose
  \[
  \sigma_n^2 = \sigma_A^2 + \sigma_B^2 + \sigma_C^2 = \sum_{i\in A}\sigma_i^2 + \sum_{i\in B}\sigma_i^2 + \sum_{i\in C}\sigma_i^2
  \]
  according to the decomposition $\Phi^{(n)} = \Phi_A \cup \Phi_B \cup \Phi_C$ where we used that the corresponding random variables of the irreducible components are independent because of the orthogonality between roots in different irreducible components, compare Theorem~\ref{thm:Cov}.
  By Lemma~\ref{lem:varlowerbound}, we moreover have a global constant~$\varepsilon$ with
  \[
  \sigma_A^2 = \sum_{i\in A}\sigma_i^2 > \varepsilon\cdot r_A^3,\quad
  \sigma_B^2 = \sum_{i\in B}\sigma_i^2 > \varepsilon\cdot |B|\cdot d^3,\quad
  \sigma_C^2 = \sum_{i\in C}\sigma_i^2 > \varepsilon\cdot d\cdot r_C\,.
  \]
  For the first inequality, we have used $\sigma_i^2 > \varepsilon' \sum_{i \in A}r_i^3 \approx (\sum_{i \in A} r_i)^3 = r_A^3$.
  The second and third inequalities are immediate from the lemma.
  
  \medskip
  
  We aim to use Theorem~\ref{thm:DepGraph} for $m = 5$ in general and for $m = 3$ where possible, we again emphasize that the dependency graph method is applicable by Theorem~\ref{thm:CovSet}.
  Proposition~\ref{prop:order of kn deltan} gives
  \[
  \frac{|\Phidinv{d_n}| \cdot \delta_n^{m-1}}{\sigma_n^m} \lesssim \frac{r_n\cdot d_n^m}{\sigma_n^m}\,.
  \]
  We have already seen that $r_n = r_A + r_B + r_C \longrightarrow \infty$.
  We thus consider the following three cases:
  
  \smallskip
  
  \noindent $r_A \approx r_n$:
  In this case, we have $r_n^3 \lesssim \sigma_n^2$ and thus
  \[
  \frac{r_n\cdot d_n^m}{\sigma_n^m} \lesssim r_n^{1-\tfrac{3m}{2}} \cdot d_n^m \lesssim r_n^{1-\tfrac{m}{2}} \longrightarrow 0 \text{ for } m=3\,.
  \]
  
  \smallskip
  
  \noindent $r_C \approx r_n$:
  In this case, we have $r_n \cdot d_n \lesssim \sigma_n^2$ and thus
  \[
  \frac{r_n\cdot d_n^m}{\sigma_n^m} \lesssim r_n^{1-\tfrac{m}{2}} \cdot d_n^{\tfrac{m}{2}} \lesssim r_n^{1-\tfrac{m}{4}} \longrightarrow 0 \text{ for } m=5\,.
  \]
  The latter approximation comes from the observation that $d_n^2 < r_i$ for any $i \in C$ and thus $d_n^2 \lesssim r_n$.
  Indeed, if we even assume $d_n^3 \ll r_n$, we obtain
  \[
  r_n^{1-\tfrac{m}{2}} \cdot d_n^{\tfrac{m}{2}} \ll r_n^{1 - \tfrac{m}{2} + \tfrac{m}{6}} = r_n^{1 - \tfrac{2m}{6}}\,,
  \]
  and thus
  \[
  \frac{r_n\cdot d_n^m}{\sigma_n^m} \lesssim r_n^{1-\tfrac{m}{2}} \cdot d_n^{\tfrac{m}{2}} \longrightarrow 0 \text{ for } m = 3
  \]
  in this case.
  
  \smallskip
  
  \noindent $r_B \approx r_n$:
  In this case, we have $|B|\cdot d_n^3 \lesssim \sigma_n^2$ and thus
  \[
  \frac{r_n\cdot d_n^m}{\sigma_n^m} \lesssim r_n \cdot d_n^{-\tfrac{m}{2}} \big|B\,\big|^{-\tfrac{m}{2}}\,.
  \]
  We may now compute
  \begin{equation}
    r_n \approx r_B = \sum_{i\in B}r_i \leq \sum_{i \in B} d_n^2 = d_n^2\cdot|B| \label{eq:goestoinfty}
  \end{equation}
  where we used that $r_i \leq d_n^2$ for all $i \in B$.
  This gives
  \[
  r_n \cdot d_n^{-\tfrac{m}{2}} \big|B\,\big|^{-\tfrac{m}{2}} \lesssim d_n^{2-\tfrac{m}{2}} \big|B\,\big|^{1-\tfrac{m}{2}} \longrightarrow 0 \text{ for } m = 5\,,
  \]
  using~\eqref{eq:goestoinfty} to show that either $d_n \rightarrow \infty$ or $|B|\rightarrow\infty$.
  If we even assume $r_n^2 \ll d_n^3$, we obtain
  \[
  r_n \cdot d_n^{-\tfrac{m}{2}} \big|B\,\big|^{-\tfrac{m}{2}} \longrightarrow 0\text{ for } m=3\,. \qedhere
  \]
\end{proof}

\appendix

\section{Concrete variances}
\label{sec:concretevariances}

Based on Theorem~\ref{thm:Cov}, we provide concrete formulas for the variances of $d$-descents and $d$-inversions in all irreducible types~$A,B,C$ and~$D$.
The concrete formulas for the variances of $d$-descents are not used in this paper and are only provided for its own sake.
The variances for $d$-inversions on the other hand are then used in Section~\ref{section:CLT} to prove Lemma~\ref{lem:varlowerbound} which is then further used to derive asymptotic normality as claimed in Theorem~\ref{thm:geninv} and Corollary~\ref{cor:geninv}.

\subsection{Type $A_{n-1}$}\label{subsection:typeA}

In this section, we provide the variances for $d$-descents and $d$-inversions of type $A_{n-1}$.
The case of $d$-inversions were already considered in \cite[Lemmata~1\&2]{bona_2008} and \cite[Theorem 1]{Pike_2011}.

\begin{theorem}
  \label{thm:VarA}
  Let $\X_\Psi$ as in~\eqref{def:RV} and $\Phiddes{d}, \Phidinv{d}$ as in~\eqref{eq:ddesdinv}.
  We then have for $d \leq n$ that
  \begin{align*}
    \Var{\X_{\Phiddes{d}}} &= \left\{\begin{array}{lr}
      \frac{1}{12} (n+d) & \text{\hspace*{246pt}if } 2d \leq n \\[1em]
      \frac{1}{4} (n-d) & \text{if } 2d \geq n
    \end{array}\right.
    \\[1em]
    \Var{\X_{\Phidinv{d}}} &= \left\{\begin{array}{lr}
      \frac{1}{18} d^3 + \frac{1}{24} d^2 + ( \frac{1}{12} n - \frac{1}{72}) d & \text{if } 2d \leq n\\[1em]
      - \frac{1}{6} d^3 + (\frac{1}{3} n -\frac{7}{24}) d^2 + ( -\frac{1}{6} n^2 + \frac{5}{12}n - \frac{1}{8}) d + (\frac{1}{36}n^3 - \frac{1}{12}n^2 + \frac{1}{18}n) & \text{if } 2d \geq n\\
    \end{array}\right.
  \end{align*}
\end{theorem}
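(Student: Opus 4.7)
The plan is to bootstrap Theorem~\ref{thm:Cov} together with the concrete combinatorics of type $A_{n-1}$ from Example~\ref{ex:A}. Since $\X_\beta$ is Bernoulli with parameter $1/2$ (the map $w \mapsto w_\circ w$ is a fixed-point-free involution swapping the events $\{w(\beta) \in \Phi^+\}$ and $\{w(\beta) \in \Phi^-\}$), I would start from the bilinear expansion
\begin{equation*}
\Var{\X_\Psi} = \frac{|\Psi|}{4} + \sum_{\beta \neq \gamma \in \Psi} \Cov(\X_\beta, \X_\gamma).
\end{equation*}
Example~\ref{ex:A} shows that for distinct positive roots $\beta = \Nroot{ij}$ and $\gamma = \Nroot{k\ell}$, one has $\ord(\beta,\gamma) \in \{2,3\}$, with $\ord = 3$ precisely when $|\{i,j\} \cap \{k,\ell\}| = 1$. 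A direct inner product computation in the shared-index cases shows that the covariance produced by Theorem~\ref{thm:Cov} is $+\tfrac{1}{12}$ when the common index occupies the \emph{same position} in both roots (both as the smaller index, or both as the larger index) and $-\tfrac{1}{12}$ when it occupies \emph{opposite positions}.

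Next, for each $v \in \{1,\ldots,n\}$ I would introduce the counts
\begin{equation*}
a_v = \big|\{\Nroot{ij} \in \Psi : i = v\}\big|, \qquad b_v = \big|\{\Nroot{ij} \in \Psi : j = v\}\big|.
\end{equation*}
Since two distinct positive roots share at most one index, each ordered pair of distinct roots with $\ord = 3$ is counted exactly once in summing over the shared index $v$, contributing $a_v(a_v - 1)$ ordered ``both smaller'' pairs ($+\tfrac{1}{12}$), $b_v(b_v - 1)$ ordered ``both larger'' pairs ($+\tfrac{1}{12}$), and $2 a_v b_v$ ordered ``opposite'' pairs ($-\tfrac{1}{12}$). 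Since $\sum_v (a_v + b_v) = 2|\Psi|$, these combine to the master formula
\begin{equation*}
\Var{\X_\Psi} = \frac{1}{12}\left(|\Psi| + \sum_{v=1}^n (a_v - b_v)^2\right).
\end{equation*}

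The remainder is a direct specialization. For $\Psi = \Phiddes{d}$, one has $a_v = \mathbf{1}[v \leq n-d]$ and $b_v = \mathbf{1}[v \geq d+1]$, giving $\sum (a_v - b_v)^2 = 2d$ when $2d \leq n$ and $2(n-d)$ when $2d \geq n$, which produces the claimed two-case formula at once. For $\Psi = \Phidinv{d}$, one has $a_v = \min(d, n-v)$ and $b_v = \min(d, v-1)$; the coordinate range $\{1,\ldots,n\}$ then splits into two regions when $2d \leq n$ and three regions when $2d \geq n$, with each region's partial sum of $(a_v - b_v)^2$ reducing to closed form via $\sum_{k=1}^m k^2 = m(m+1)(2m+1)/6$. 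The main bookkeeping hurdle will be the middle region in the $2d \geq n$ case, contributing $\sum_{v = n-d+1}^{d} (n - 2v + 1)^2$, a sum of squares over an arithmetic progression of consecutive odd (or even) integers; this is elementary but requires care. Combining these with $|\Phidinv{d}| = dn - d(d+1)/2$ and simplifying the resulting polynomials in $n$ and $d$ matches the stated expressions.
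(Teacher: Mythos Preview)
Your argument is correct and your master formula
\[
\Var{\X_\Psi} = \tfrac{1}{12}\Big(|\Psi| + \sum_{v=1}^n (a_v - b_v)^2\Big)
\]
is valid for \emph{any} $\Psi \subseteq \Phiplus$ in type~$A_{n-1}$; the derivation and the two specializations go through as you outline. This is genuinely different from the paper's route. The paper partitions $\Phiplus$ by height into the sets $N_a = \{\Nroot{ij} : j-i = a\}$, computes in Lemma~\ref{lem:cardN} the four interaction counts between $N_a$ and $N_b$ (shared left index, shared right index, $j=k$, $i=\ell$), and then evaluates $\Var{\X_{\Phidinv{d}}}$ as a triple of sums over $1 \le a,b \le d$, splitting on whether $a+b \le n$. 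Your approach instead groups by the shared vertex~$v$ and collapses the $\pm\tfrac{1}{12}$ contributions into the perfect square $(a_v-b_v)^2$, so only a single sum over~$v$ remains. This is shorter and more conceptual for type~$A$, and it explains structurally why the variance is always at least $|\Psi|/12$. The paper's height-by-height bookkeeping, on the other hand, is what carries over to types $B$, $C$, $D$, where roots come in several species with covariances $\pm\tfrac{1}{12}$ and $\pm\tfrac{1}{8}$ and no single quadratic identity absorbs all interactions; your vertex-grouping trick does not extend cleanly there.
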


The proof is provided in several steps.
We first partition the positive roots $\Phiplus = N_1 \cup N_2 \cup \dots \cup N_{n-1}$ according to their height,
\[
N_a = \{ \beta \in \Phiplus \mid \height(\beta) = a \} = \big\{\Nroot{ij} \mid 1 \leq i < j = i+a \leq n\big\}\,.
\]
The following lemma is then immediate from this definition.

\begin{lemma}\label{lem:cardNa}
  We have
  \[
  N_a \neq \emptyset \iff 1 \leq a < n, \quad |N_a| = n - a \text{ for } 1 \leq a < n\,.
  \]
\end{lemma}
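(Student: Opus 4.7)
The plan is to simply unwind the definitions given in Example~\ref{ex:A} and read off the count. Recall that in type $A_{n-1}$, the positive roots are exactly $\{e_j - e_i \mid 1 \leq i < j \leq n\}$, abbreviated as $[ij]$, and that the height is given by $\height([ij]) = j - i$. Hence $N_a$ is in bijection with the set of pairs $(i,j)$ of integers satisfying $1 \leq i < j \leq n$ and $j - i = a$.

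The map $(i,j) \mapsto i$ identifies this set with the set of integers $i$ such that $1 \leq i$ and $i + a \leq n$, that is, the set $\{1, 2, \dots, n-a\}$. This set is non-empty precisely when $n - a \geq 1$, equivalently $a \leq n-1$, which combined with the obvious requirement $a \geq 1$ gives the first claim $N_a \neq \emptyset \iff 1 \leq a < n$. Moreover, whenever $1 \leq a < n$, the cardinality is $|\{1, \dots, n-a\}| = n-a$, giving the second claim.

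There is no real obstacle here; the whole lemma is essentially a book-keeping observation about the combinatorial realization of the type~$A_{n-1}$ root poset, and is stated only to fix notation for the subsequent variance computations.
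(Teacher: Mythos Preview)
Your proof is correct and matches the paper's own treatment: the paper simply declares the lemma ``immediate from this definition'' without writing out any further argument, and your unwinding of the definition of~$N_a$ via Example~\ref{ex:A} is exactly that immediate verification made explicit.
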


According to Theorem~\ref{thm:Cov}, the random variables $\X_{\Nroot{ij}}, \X_{\Nroot{k\ell}}$ are independent if and only if $\{i,j\} \cap \{k,\ell\} = \emptyset$.
Moreover, we have for $k \neq j$ the covariances
\begin{align}
  \Cov(\X_{\Nroot{ij}},\X_{\Nroot{ik}}) = \Cov(\X_{\Nroot{ji}},\X_{\Nroot{ki}}) = \tfrac{1}{12}, \quad \Cov(\X_{\Nroot{ji}},\X_{\Nroot{ik}}) = \Cov(\X_{\Nroot{ij}},\X_{\Nroot{ki}}) = -\tfrac{1}{12}\,.
  \label{eq:paircovariances}
\end{align}
We therefore need to compute the following four cardinalities.

\begin{lemma}
  \label{lem:cardN}
  For $1 \leq a,b < n$, we have
  \begin{align*}
    \big|\{\Nroot{ij},\Nroot{k\ell} \in N_a \times N_b \mid i=k, j \neq k\} \big| &= \big|\{\Nroot{ij},\Nroot{k\ell} \in N_a \times N_b \mid i \neq k, j=\ell \}\big| \\
    &= \left\{\begin{array}{ll}
      n- \max\{a,b\} & a \neq b\\
      0 & a = b
    \end{array}\right. \\[1em]
    \big|\{\Nroot{ij},\Nroot{k\ell} \in N_a \times N_b \mid i = \ell\}\big| &= \big|\{\Nroot{ij},\Nroot{k\ell} \in N_a \times N_b \mid j=k\}\big| \\
    &= n - \min\{n, a+b\}
  \end{align*}
\end{lemma}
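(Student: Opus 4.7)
The plan is a direct enumeration using the explicit parametrization $N_a = \{\Nroot{i,i+a} \mid 1 \leq i \leq n-a\}$, handled case by case, with the four cases corresponding to the four possible index coincidences between $\Nroot{ij} = \Nroot{i,i+a}$ and $\Nroot{k\ell} = \Nroot{k,k+b}$.

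First I would set up the first two equalities. For the case $i = k$ and $j \neq \ell$, the two roots are $\Nroot{i,i+a}$ and $\Nroot{i,i+b}$, which forces $a \neq b$ (otherwise $j = \ell$ automatically), and the constraints $i+a \leq n$ and $i+b \leq n$ collapse into $i \leq n - \max\{a,b\}$. Since $i \geq 1$ gives no further restriction, the count is $n - \max\{a,b\}$ when $a \neq b$ and $0$ when $a = b$. The case $i \neq k$, $j = \ell$ is perfectly symmetric: parametrize by the common $j$, with constraints $j - a \geq 1$, $j - b \geq 1$, $j \leq n$, yielding $j$ in $\{\max\{a,b\}+1,\ldots,n\}$, again of cardinality $n - \max\{a,b\}$ when $a \neq b$. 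The condition $j = \ell$ together with $i \neq k$ again forces $a \neq b$.

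Next I would handle the last two equalities, which describe the situation of a two-step chain in the root poset. For $i = \ell$, the second root is $\Nroot{i-b,i}$, and combined with the first root $\Nroot{i,i+a}$ we need $i - b \geq 1$ and $i + a \leq n$; the number of integer $i$ satisfying $b+1 \leq i \leq n - a$ is $\max\{0, n - a - b\}$, which one rewrites as $n - \min\{n, a+b\}$. The case $j = k$ is analogous: the first root is $\Nroot{i,i+a}$ and the second is $\Nroot{i+a,i+a+b}$, with constraints $1 \leq i$ and $i + a + b \leq n$, yielding the same count.

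There is no real obstacle here; it is all elementary enumeration. The only thing to be careful of is the boundary behavior: the first two counts vanish exactly when $a = b$ (so the $\max$-formula is written conditionally), and the last two counts vanish exactly when $a + b \geq n$ (captured uniformly by $n - \min\{n, a+b\}$). This also verifies consistency with Lemma~\ref{lem:cardNa}, since when $a = b$ the pair $(i,i+a) = (k,k+b)$ is simply the same root, leaving no genuine pair with $i = k$ but $j \neq \ell$.
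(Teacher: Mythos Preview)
Your proof is correct and follows essentially the same approach as the paper's own proof: both use the explicit parametrizations $N_c = \{\Nroot{i,i+c} \mid 1 \leq i \leq n-c\} = \{\Nroot{j-c,j} \mid c < j \leq n\}$ and count directly, with the paper leaving the case-by-case enumeration implicit while you spell it out. Note that you (reasonably) read the condition ``$j \neq k$'' in the first set as the evidently intended ``$j \neq \ell$''; as literally written the condition is vacuous (since $j > i = k$), so your interpretation is the one that matches the stated formula.
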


\begin{proof}
  First, we observe that the two sets in the first equality are empty for $a=b$.
  For all other cases, one uses
  \[
  N_c = \big\{ \Nroot{i,i+c} \mid 0 < i \leq n-c \big\} = \big\{ \Nroot{j-c,j} \mid c < j \leq n \big\}
  \]
  both for $N_a$ and $N_b$ to obtain the given counting formulas.
\end{proof}

\begin{proof}[Proof of Theorem~\ref{thm:VarA}]
  Combining Lemma~\ref{lem:cardN} with~\eqref{eq:paircovariances}, we obtain from the case $a=b=d$ that
  \[
  \Var{\X_{\Phiddes{d}}} = \tfrac{1}{4} |N_d| - \tfrac{2}{12} \big(n- \min(n, 2d)\big)\,.
  \]
  This is equivalent to the two cases in the formula for $\Var{\X_{\Phiddes{d}}}$.
  On the other hand, we have
  \[
  \Var{\X_{\Phidinv{d}}} = \tfrac{1}{4}\sum\limits_{1\leq a \leq d} |N_a| +  \tfrac{2}{12} \sum\limits_{1\leq a \neq b \leq d} \big(n- \max(a,b) \big) - \tfrac{2}{12} \sum\limits_{1\leq a, b \leq d} \big(n-\min(n,a+b) \big)\,.
  \]
  The first sum comes from the terms $\Var{\X_{\Nroot{ij}}}$ for all $\Nroot{ij} \in \Phidinv{d}$.
  The second sum comes from the terms of the form $\Cov(\X_{\Nroot{ij}},\X_{\Nroot{ik}})$.
  It can be rewritten as
  \[
  \sum\limits_{1\leq a \neq b \leq d} (n- \max(a,b) ) = 2 \cdot \sum\limits_{1\leq a < b \leq d} (n-b) \\
  = 2 \cdot \Big(- \tfrac{1}{6} ( d-1)d(2d-3n+2)\Big)\,.
  \]
  For the third sum, first note that the summands are zero for $a+b \geq n$.
  We may thus rewrite this as
  \[
  \sum\limits_{1\leq a, b \leq d} (n-\min(n,a+b)) = \sum\limits_{\substack{1 \leq a,b \leq d \\ a+b \leq n}} (n-(a+b)) = \sum\limits_{1 \leq a\leq d} \, \sum\limits_{1 \leq b\leq \min(n-a, d)} (n-(a+b))\,.
  \]
  Assume first $2d \leq n$.
  Since $a,b \leq d$ and thus $d \leq n-a$, we obtain
  \[
  \sum\limits_{1\leq a, b \leq d} (n-\min(n,a+b)) = \sum\limits_{1 \leq a\leq d} \sum\limits_{1 \leq b\leq d} (n-(a+b)) = d^2(-d+n-1).
  \]
  Assume next $2d \geq n$.
  We further consider two cases $d \leq n-a$ and $d > n-a$.
  \begin{align*}
    \sum\limits_{1\leq a, b \leq d} (n-\min(n,a+b)) &= \sum\limits_{1 \leq a\leq n-d} \sum\limits_{1 \leq b\leq d} (n-(a+b)) + \sum\limits_{n-d < a\leq d} \sum\limits_{1 \leq b\leq n-a} (n-(a+b))\\
    &= - \tfrac{1}{2} d (n-2)(d-n) + \tfrac{1}{6} (2d-n)(d^2-dn+n^2-3n+2)
  \end{align*}
  Combining the three sums under consideration, one obtains the stated variance.
\end{proof}

\subsection{Type $B_n$}

In this section, we provide the variances for $d$-descents and $d$-inversions of type $B_{n}$.

\begin{theorem}
  \label{thm:VarB}
  Let $\X_\Psi$ as in~\eqref{def:RV} and $\Phiddes{d}, \Phidinv{d}$ as in~\eqref{eq:ddesdinv}.
  We then have for ${1 \leq d \leq 2n-1}$ that
  \begin{align*}
    \Var{\X_{\Phiddes{d}}} &= \\
    &\left\{\begin{array}{lr}
      \frac{1}{24} d + \frac{1}{12} n  + \frac{1}{12}  &\text{ \hspace*{229pt} if } d \leq \frac{n}{2} \text{, } d \text{ even} \\[1em]
      \frac{1}{24} d + \frac{1}{12} n  + \frac{1}{24}  & \text{if } d \leq \frac{n}{2} \text{, } d \text{ odd} \\[2em]
      \frac{1}{24} d + \frac{1}{12} n  + \frac{1}{6}  & \text{if } \frac{n}{2} < d \leq \frac{2}{3}n \text{, } d \text{ even} \\[1em]
      \frac{1}{24} d + \frac{1}{12} n  + \frac{1}{8}  & \text{if } \frac{n}{2} < d \leq \frac{2}{3}n \text{, } d \text{ odd} \\[2em]
      \frac{1}{24} d + \frac{1}{12} n  & \text{if } \frac{2}{3} n < d \leq n \text{, } d \text{ even} \\[1em]
      \frac{1}{24} d + \frac{1}{12} n  + \frac{1}{8}  & \text{if } \frac{2}{3} n \leq d \leq n \text{, } d \text{ odd} \\[2em]
      -\frac{1}{8} d + \frac{1}{4} n  & \text{if } n \leq d \text{, } d \text{ even} \\[1em]
      -\frac{1}{8} d + \frac{1}{4} n  + \frac{1}{8}  & \text{if } n \leq d \text{, } d \text{ odd}
    \end{array}\right.
    \\[1em]
    \Var{\X_{\Phidinv{d}}} &= \\
    &\left\{\begin{array}{lr}
      \frac{1}{36} d^3 + \frac{1}{48} d^2 + (\frac{1}{12} n + \frac{1}{72}) d & \text{if } d \leq \frac{n}{2} \text{, } d \text{ even} \\[1em]
      \frac{1}{36} d^3 + \frac{1}{48} d^2 + (\frac{1}{12} n + \frac{1}{72}) d + \frac{1}{48} & \text{if } d \leq \frac{n}{2} \text{, } d \text{ odd} \\[2em]
      \frac{1}{36} d^3 + \frac{3}{16} d^2 + (-\frac{1}{12} n + \frac{7}{72}) d + (\frac{1}{24} n^2 - \frac{1}{24}n) & \text{if } \frac{n}{2} \leq d \leq \frac{2}{3}n \text{, } d \text{ even} \\[1em]
      \frac{1}{36} d^3 + \frac{3}{16} d^2 + (-\frac{1}{12} n + \frac{7}{72}) d+ (\frac{1}{24} n^2 - \frac{1}{24} n + \frac{1}{48}) & \text{if } \frac{n}{2} \leq d < \frac{2}{3}n \text{, } d \text{ odd} \\[2em]
      \frac{1}{36} d^3 + (\frac{1}{6} n - \frac{1}{36}) d + ( - \frac{1}{24}n^2 + \frac{1}{24} n ) & \text{if } \frac{2}{3} n \leq d \leq n \text{, } d \text{ even} \\[1em]
      \frac{1}{36} d^3 + (\frac{1}{6} n + \frac{7}{72}) d + (- \frac{1}{24} n^2 - \frac{1}{24} n + \frac{1}{24}) & \text{if } \frac{2}{3} n \leq d \leq n \text{, } d \text{ odd} \\[2em]
      -\frac{1}{12} d^3 + (\frac{1}{3} n - \frac{1}{24}) d^2 + (-\frac{1}{3} n^2+ \frac{1}{6} n - \frac{1}{24}) d +  (\frac{1}{9} n^3 + \frac{1}{18} n) & \text{if } n \leq d  \text{, } d \text{ even} \\[1em]
      -\frac{1}{12} d^3 + (\frac{1}{3} n - \frac{1}{24}) d^2 + (-\frac{1}{3} n^2+ \frac{1}{6} n + \frac{1}{12}) d +  (\frac{1}{9} n^3 - \frac{1}{36} n + \frac{1}{24}) & \text{if } n \leq d  \text{, } d \text{ odd} \\[1em]
    \end{array}\right.
  \end{align*}
\end{theorem}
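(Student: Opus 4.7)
The strategy mirrors the type~$A$ proof but now accounts for the three species of positive roots in type~$B_n$, namely $\Nroot{ij} = e_j-e_i$, $\Oroot{i} = e_i$, and $\Proot{ij}=e_j+e_i$, and the richer collection of possible reflection orders recorded in Example~\ref{ex:B}. The plan is to partition $\Phiplus$ by height into $N_a\cup O_a\cup P_a$ where
\[
N_a = \{\Nroot{ij}\mid j-i=a\},\quad O_a = \{\Oroot{a}\}\cap\Phiplus,\quad P_a = \{\Proot{ij}\mid i+j=a\},
\]
record the cardinalities $|N_a|=\max(n-a,0)$, $|O_a| = [\![1\leq a\leq n]\!]$, and $|P_a|=\max(0,\lfloor(a-1)/2\rfloor - \max(0,a-n-1))$, and then sum contributions of $\Var{\X_\beta}=\tfrac14$ together with all pairwise covariances.

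The pairwise covariances are read off from Theorem~\ref{thm:Cov} together with the tabulated orders in Example~\ref{ex:B}. Explicitly, any two distinct roots sharing one index among their supports have $|\langle\beta,\gamma\rangle|=1$, and using $\operatorname{ord}(\beta,\gamma)\in\{2,3,4,6\}$ one obtains
\[
\Cov(\X_{\Nroot{ij}},\X_{\Nroot{k\ell}}),\ \Cov(\X_{\Nroot{ij}},\X_{\Proot{k\ell}}),\ \Cov(\X_{\Proot{ij}},\X_{\Proot{k\ell}})\in\{\pm\tfrac{1}{12}\},
\]
and $\Cov(\X_{\Oroot{i}},\X_{\Nroot{k\ell}}),\ \Cov(\X_{\Oroot{i}},\X_{\Proot{k\ell}})\in\{\pm\tfrac18\}$, with signs determined by the sign of $\langle\beta,\gamma\rangle$. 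In particular $\Oroot{i}$ and $\Oroot{j}$ are orthogonal for $i\neq j$, so they do not interact.

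Next I would carry out the bookkeeping for fixed heights $a,b$: analogously to Lemma~\ref{lem:cardN} I would count the number of index-sharing pairs of each type, producing a finite list of bilinear counting functions of the form $\max(0,n-\max(a,b))$, $\max(0,n-\min(n,a+b))$, together with analogous quantities arising from the mixed $N$--$P$ and $P$--$P$ interactions (these additionally involve thresholds such as $a+b\leq n+1$ or $a+b\geq 3$ coming from the range restrictions on $\Proot{ij}$). For $\Var{\X_{\Phiddes{d}}}$ only the single height $a=b=d$ is needed and the formula reduces to a short alternating sum; parity of $d$ enters because $|O_d|$ is nonzero iff $d\leq n$ and $|P_d|$ depends on whether $d$ is even or odd via $\lfloor(d-1)/2\rfloor$. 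For $\Var{\X_{\Phidinv{d}}}$ the analogous expression is a double sum $\sum_{1\leq a,b\leq d}$ of the same counting functions, which I would evaluate piecewise as a polynomial in $d$ and $n$.

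The case distinctions in the theorem statement arise precisely at the thresholds where these counting functions change polynomial form: $d\leq n/2$ corresponds to $2d\leq n$ so that none of the $\min(n,a+b)$ truncations activate; the ranges $n/2<d\leq 2n/3$, $2n/3<d\leq n$, and $d>n$ correspond respectively to activating the $N$--$N$ truncation, the $N$--$P$ and $P$--$P$ truncations (whose onsets are controlled by $a+b\leq n+1$), and finally the saturation of $O_a$ and $N_a$ ranges. The parity split within each range reflects the contribution of $|P_d|$ and of the single "middle" pair structure when $a+b$ has a given parity. The main obstacle will be keeping this piecewise evaluation organized: one must track six counting templates across four regimes and two parities, and verify by direct substitution that the polynomials in consecutive regimes agree at the transition values of~$d$, which provides a useful sanity check. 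After that, the arithmetic reduces to elementary polynomial sums of the type already encountered in the proof of Theorem~\ref{thm:VarA}, and I would suppress the routine computations in favour of a table listing the contribution of each pair type in each regime.
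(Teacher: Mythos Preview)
Your proposal follows essentially the same route as the paper: partition $\Phiplus$ by height into $N_a\cup O_a\cup P_a$, read off the pairwise covariances $\pm\tfrac{1}{12}$ and $\pm\tfrac{1}{8}$ from Theorem~\ref{thm:Cov} together with the order table in Example~\ref{ex:B}, count index-sharing pairs between the various species at fixed heights, and then sum. The paper organizes this by first recording the four interaction counts $|\{(\beta,\gamma)\in N_a\times P_b : i=k\}|$ etc.\ as piecewise formulas (Propositions~\ref{prop:interactionNP}--\ref{prop:interactionOO}), then assembling the block covariances $\Cov(\X_{N_{\leq d}},\X_{P_{\leq d}})$, $\Cov(\X_{N_{\leq d}},\X_{O_{\leq d}})$, \dots\ (Proposition~\ref{prop:covariancesB}), and finally adding the six blocks. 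Your outline is the same decomposition with the same bookkeeping.

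One small inaccuracy: your attribution of the $\tfrac{2n}{3}$ threshold to a condition of the shape $a+b\leq n+1$ is not quite right. In the $N$--$P$ interaction table the relevant switch is the inequality $n-a\leq\lfloor\tfrac{b-1}{2}\rfloor$, which for $a,b\leq d$ first activates when $n-d\leq\tfrac{d-1}{2}$, i.e.\ $d\gtrsim\tfrac{2n}{3}$; the condition $a+b\leq n$ instead governs the $\tfrac{n}{2}$ threshold already familiar from type~$A$. This does not affect the correctness of your strategy, only the informal commentary on where the breakpoints come from.
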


The proof is provided in several steps.
We first partition the positive roots according to their height,
\begin{align*}
  N_a  &= \big\{\Nroot{ij} \mid 1 \leq i < j \leq n \mid j-i\! =\! a \big\},\\
  O_a  &= \big\{\Oroot{i} \mid 1 \leq i \leq n \mid i\! =\! a \big\},\\
  P_a  &= \big\{\Proot{ij} \mid 1 \leq i < j \leq n \mid j+i\! =\! a\big\}\,.
\end{align*}
so that the roots of height~$a$ are $N_a \cup O_a \cup P_a$.
Analogous to Lemma~\ref{lem:cardNa}, we have the following cardinalities.

\begin{lemma}
  We have
  \begin{itemize}
    \item $O_a \neq \emptyset \Leftrightarrow 1 \leq a \leq n$ and $|O_a| = 1$ for $1 \leq a \leq n$
    \item $P_a \neq \emptyset \Leftrightarrow 3 \leq a \leq 2n-1$ and $|P_a| =  \left\{\begin{array}{lr}
      \lfloor\frac{a-1}{2}\rfloor & \text{if } 3 \leq a \leq n \\[1em]
      \lfloor \frac{2n-a+1}{2} \rfloor & \text{if } n+1 \leq a \leq 2n-1
    \end{array}\right.$
  \end{itemize}
\end{lemma}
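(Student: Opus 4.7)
The plan is to verify the two bullet points by direct enumeration, since $O_a$ and $P_a$ are very explicit subsets of $\Phiplus$ defined by purely arithmetic conditions on the indices.

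First, for $O_a$: by definition $O_a$ consists of those $\Oroot{i} = e_i$ with $1 \leq i \leq n$ and $i = a$. So $O_a$ is either empty (when $a \notin \{1,\dots,n\}$) or the singleton $\{\Oroot{a}\}$. This gives both claims for $O_a$ immediately.

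Next, for $P_a$, I would count the pairs $(i,j)$ with $1 \leq i < j \leq n$ and $i+j = a$. The extremal pairs $(1,2)$ and $(n-1,n)$ give $a = 3$ and $a = 2n-1$ respectively, and every intermediate value is realized, which justifies the range $3 \leq a \leq 2n-1$. To count, I would parametrize by $i$ and set $j = a-i$. The constraint $i < j$ becomes $i < a/2$, i.e.\ $i \leq \lfloor (a-1)/2 \rfloor$, and the constraint $j \leq n$ becomes $i \geq a-n$ (the constraint $i \geq 1$ being automatic when $a \geq 3$).

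I would then split into the two regimes in the statement. For $3 \leq a \leq n$ the bound $i \geq a-n$ is implied by $i \geq 1$, so $i$ runs over $\{1, 2, \dots, \lfloor (a-1)/2 \rfloor\}$ and the count is exactly $\lfloor (a-1)/2 \rfloor$. For $n+1 \leq a \leq 2n-1$, the active lower bound is $i \geq a-n$, and $i$ runs over $\{a-n, a-n+1, \dots, \lfloor (a-1)/2 \rfloor\}$, giving the count
\[
\Big\lfloor \tfrac{a-1}{2} \Big\rfloor - (a-n) + 1 = n - a + 1 + \Big\lfloor \tfrac{a-1}{2} \Big\rfloor.
\]
A brief parity check (separating $a$ even and $a$ odd) then confirms this simplifies to $\lfloor (2n-a+1)/2 \rfloor$ in both cases.

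There is no real obstacle here — the only point requiring care is the bookkeeping with the floor function in the two parity cases, and verifying that the endpoints $a = n$ and $a = n+1$ match up consistently between the two formulas (they do, since at $a=n$ the second formula would also give $\lfloor (n+1)/2 \rfloor = \lfloor (n-1)/2 \rfloor$ or $\lfloor n/2 \rfloor$ depending on parity, matching the first).
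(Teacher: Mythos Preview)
Your proof is correct and takes the same approach as the paper, which simply records that the lemma ``follows directly from the definition''; you have carried out that direct enumeration explicitly. Your final parenthetical aside about the two formulas agreeing at $a=n$ is slightly off (for even~$n$ they do not: $\lfloor(n-1)/2\rfloor \neq \lfloor(n+1)/2\rfloor$), but this is harmless since the two cases cover disjoint ranges and no consistency check is required.
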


\begin{proof}
  This follows directly from the definition.
\end{proof}
The situation is slightly more delicate than in type $A_{n-1}$.
We thus extract the covariances between roots of the three form into a proposition.
To this end, we introduce the sets
\[
N_{\leq d} = \bigcup_{a=1}^d N_a, \quad O_{\leq d} = \bigcup_{a=1}^d O_a,\quad P_{\leq d} = \bigcup_{a=1}^d P_a\,.
\]

\begin{proposition}
  \label{prop:covariancesB}
  For $1 \leq d \leq 2n-1$ we have,
  \begin{align*}
    2 \cdot &\Cov(\Pinv, \Ninv) =\\
    &\left\{\begin{array}{lr}
      -\frac{1}{18}d^3 + \tfrac{1 }{16} d^2 + \tfrac{7}{72}d  & \text{if } d \leq \frac{n}{2} \text{, d even} \\
      -\frac{1}{18}d^3 + \tfrac{1 }{16} d^2 + \tfrac{1}{18}d - \tfrac{1}{16}& \text{if } d \leq \frac{n}{2} \text{, d odd} \\ [1em]
      \frac{1}{6} d^3 + (-\tfrac{1}{3} n + \tfrac{1}{16}) d^2 + (\tfrac{1}{6}n^2 + \tfrac{1}{24})d + (- \tfrac{1}{36} n^3 + \tfrac{1}{36}n)  & \text{if } \tfrac{n}{2} \leq d \leq \frac{2n}{3} \text{, d even} \\
      \frac{1}{6} d^3 + (-\tfrac{1}{3} n + \tfrac{1}{16}) d^2 + (\tfrac{1}{6}n^2)d + (- \tfrac{1}{36} n^3 + \tfrac{1}{36}n - \tfrac{1}{16})  & \text{if } \tfrac{n}{2} \leq d \leq \frac{2n}{3}  \text{, d odd} \\ [1em]
      \frac{1}{6} d^3 + (-\tfrac{1}{3} n - \tfrac{1}{8}) d^2 + (\tfrac{1}{6}n^2 + \tfrac{1}{4}n - \tfrac{1}{12})d + (- \tfrac{1}{36} n^3 - \tfrac{1}{12}n^2 + \tfrac{1}{9}n)  & \text{if } \tfrac{2n}{3} \leq d \leq n  \text{, d even} \\
      \frac{1}{6} d^3 + (-\tfrac{1}{3} n - \tfrac{1}{8}) d^2 + (\tfrac{1}{6}n^2 + \tfrac{1}{4}n)d + (- \tfrac{1}{36} n^3 - \tfrac{1}{12}n^2 + \tfrac{1}{36}n - \tfrac{1}{24})& \text{if } \tfrac{2n}{3} \leq d \leq n \text{, d odd} \\ [1em]
      -\frac{1}{18} d^3 + (\tfrac{1}{4} n + \tfrac{1}{24}) d^2 + (-\tfrac{1}{3}n^2 - \tfrac{1}{6} n - \tfrac{1}{36})d + ( \tfrac{1}{9} n^3 + \tfrac{1}{6}n^2 + \tfrac{1}{18}n) & \text{if } n \leq d \text{, d even} \\
      -\frac{1}{18} d^3 + (\tfrac{1}{4} n + \tfrac{1}{24}) d^2 + (-\tfrac{1}{3}n^2 - \tfrac{1}{6} n + \tfrac{1}{18})d + ( \tfrac{1}{9} n^3 - \tfrac{1}{6}n^2 - \tfrac{1}{36}n - \tfrac{1}{24}) & \text{if } n \leq d \text{, d odd} \\ [1em]
    \end{array}\right.
    \\[1em]
    2 \cdot &\Cov(\Ninv, \Oinv) = \\
    &\left\{\begin{array}{lr}
      -\frac{1}{8}d^2 - \tfrac{1 }{8} d & \text{\hspace*{220pt} if } d \leq \frac{n}{2}\\
      \frac{3}{8}d^2 + (-\tfrac{1 }{2} n + \tfrac{1}{8}) d + (\tfrac{1}{8}n^2 - \tfrac{1}{8} n)& \text{if } \frac{n}{2} \leq d \leq n
    \end{array}\right.
    \\[1em]
    2 \cdot &\Cov(\Pinv, \Oinv) = \\
    & \left\{\begin{array}{lr}
      \tfrac{1}{8}d^2 - \tfrac{1}{4} d & \text{\hspace*{185pt} if } d \leq n \text{, } d \text{ even}\\
      \tfrac{1}{8}d^2 - \tfrac{1}{4} d + \tfrac{1}{8}& \text{if } d \leq n \text{, } d \text{ odd}\\ [1em]
      -\tfrac{1}{8}d^2 + \tfrac{1}{2} n d + (-\tfrac{1}{4} n^2 - \tfrac{1}{4}n) & \text{if } n \leq d \text{, } d \text{ even}\\
      -\tfrac{1}{8}d^2 + \tfrac{1}{2} n d + (-\tfrac{1}{4} n^2 - \tfrac{1}{4}n + \tfrac{1}{8}) & \text{if } n \leq d \text{, } d \text{ odd}
    \end{array}\right.
    \\[1em]
    &\Cov(\Pinv, \Pinv) = \\
    &\left\{\begin{array}{lr}
      \tfrac{1}{36}d^3 - \tfrac{1}{12} d^2 + \tfrac{1}{18} d & \text{\hspace*{35pt}if } d \leq n \text{, } d \text{ even}\\
      \tfrac{1}{36}d^3 - \tfrac{1}{12} d^2 + \tfrac{7}{72} d - \tfrac{1}{24}& \text{if } d \leq n \text{, } d \text{ odd}\\ [1em]
      -\tfrac{1}{36}d^3 + (\tfrac{1}{12} n + \tfrac{1}{24}) d^2 + (-\tfrac{1}{6} n - \tfrac{1}{72}) d + (- \tfrac{1}{36} n^3 + \tfrac{1}{24} n^2 + \tfrac{5}{72}n)  & \text{if } n \leq d \text{, } d \text{ even}\\
      -\tfrac{1}{36}d^3 + (\tfrac{1}{12} n + \tfrac{1}{24}) d^2 + (-\tfrac{1}{6} n - \tfrac{1}{36}) d + (- \tfrac{1}{36} n^3 + \tfrac{1}{24} n^2 + \tfrac{5}{72}n - \tfrac{1}{24}) & \text{if } n \leq d \text{, } d \text{ odd}
    \end{array}\right.
    \\[1em]
    &\Cov(\Oinv, \Oinv) = \begin{array}{lr} \tfrac{1}{4}d  & \text{\hspace*{274pt}if } d \leq n  \end{array}
  \end{align*}
\end{proposition}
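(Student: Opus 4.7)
The plan is to reduce each of the five covariances to a double sum of pairwise covariances $\Cov(\X_\beta, \X_\gamma)$ and then apply Theorem~\ref{thm:Cov} termwise. Writing $\X_{S_{\leq d}} = \sum_{a=1}^d \sum_{\beta \in S_a} \X_\beta$ for $S \in \{N, O, P\}$, bilinearity of covariance gives
\[
\Cov(\X_{S_{\leq d}}, \X_{T_{\leq d}}) \;=\; \sum_{a,b=1}^{d}\ \sum_{\beta \in S_a,\ \gamma \in T_b} \Cov(\X_\beta, \X_\gamma).
\]
By the order tables in Example~\ref{ex:B}, any two positive roots in type $B_n$ satisfy $\ord(s_\beta s_\gamma) \in \{1,2,3,4\}$, so by Theorem~\ref{thm:Cov} each pairwise covariance lies in $\{0, \pm\tfrac{1}{12}, \pm\tfrac{1}{8}, \tfrac{1}{4}\}$, with the sign determined by the inner product of the explicit realizations $\pm e_i$ and $e_j \pm e_i$.

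For each ordered pair $(S,T)$ I would enumerate the admissible index-overlap patterns between $\beta \in S_a$ and $\gamma \in T_b$. The diagonal $\Cov(\Oinv, \Oinv)$ is immediate since $|O_a|=1$ and distinct $\Oroot{i}, \Oroot{j}$ are orthogonal. For $O$-$N$ and $O$-$P$ only a single shared index contributes, with $\ord = 4$. The principal cases are $P$-$N$ and $P$-$P$, where pairs sharing exactly one index contribute $\pm \tfrac{1}{12}$, with the sign dictated by whether the shared-index slot carries the same or opposite sign in the two roots. The resulting counts of overlap types, viewed as functions of $(a,b)$, are piecewise polynomial in $a, b, n$ because $|P_a| = \lfloor (a-1)/2 \rfloor$ for $a \le n$ switches to $\lfloor (2n-a+1)/2 \rfloor$ for $a > n$, and the admissible range of $i$ in $i + j = a$ with $1 \le i < j \le n$ changes form at $a = n$.

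The final step is to evaluate the double sums $\sum_{a,b \le d}$, which decompose at the thresholds $d = n/2$, $2n/3$, $n$ inherited from the regions where the overlap counts change form (for instance, the number of pairs in $P_a \times P_b$ sharing exactly one index breaks at $a + b = n$ and at $a = n$ or $b = n$). The parity of $d$ enters through the floor functions in $|P_a|$, producing the two parallel families of formulas. Within each regime the summations reduce to standard evaluations of $\sum a$, $\sum a^2$, $\sum ab$ over triangular or rectangular index sets, yielding the cubic polynomials in $d$ and $n$ listed in the statement.

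The genuine mathematical input is confined to Theorem~\ref{thm:Cov} and the order tables of Example~\ref{ex:B}; the main obstacle is organizational, namely carrying out the case analysis over the eight regimes---four thresholds in $d$ times two parities---consistently for each of the five covariances without arithmetic slippage, and verifying that adjacent formulas agree at the breakpoints $d = n/2,\ 2n/3,\ n$. A useful final sanity check is to compare $\Var{\X_{\Phidinv{d}}}$ recovered by summing the computed covariances (each doubled where appropriate) together with $\Cov(\Ninv, \Ninv)$ from the type $A$ calculation against the tabulated value in Theorem~\ref{thm:VarB}; any miscount in the intermediate enumerations would be caught at this step.
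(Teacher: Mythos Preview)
Your proposal is correct and follows essentially the same approach as the paper: both reduce the covariances via bilinearity to sums of pairwise $\Cov(\X_\beta,\X_\gamma)$, evaluate these using Theorem~\ref{thm:Cov} together with the order tables from Example~\ref{ex:B}, enumerate the index-overlap patterns between $S_a$ and $T_b$ as piecewise polynomial functions of $(a,b,n)$, and then sum over $a,b\leq d$ with case distinctions at the thresholds and by parity. The paper merely makes the enumeration step more explicit by isolating the overlap counts into separate tabulated propositions (Propositions~\ref{prop:interactionNP}--\ref{prop:interactionOO}) before carrying out one representative summation in detail.
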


According to Theorem~\ref{thm:Cov} there are several combinations of roots in $N_a, P_b, O_c$ with non-zero covariance to investigate.
We have
\begin{align*}
  N \sim N: &\qquad \Cov(\X_{\Nroot{ij}},\X_{\Nroot{ik}}) = \tfrac{1}{12} \text{ for } j \neq k &&\Cov(\X_{\Nroot{ij}},\X_{\Nroot{ki}}) = -\tfrac{1}{12} \text{ for } j \neq k\\[5pt]
  &\qquad \Cov(\X_{\Nroot{ij}},\X_{\Nroot{jk}}) = -\tfrac{1}{12} \text{ for } i \neq k &&\Cov(\X_{\Nroot{ij}},\X_{\Nroot{kj}}) = \tfrac{1}{12} \text{ for } i \neq k\\[5pt]
  N \sim O: &\qquad \Cov(\X_{\Nroot{ij}},\X_{\Oroot{i}}) = -\tfrac{1}{8},  &&\Cov(\X_{\Nroot{ij}},\X_{\Oroot{j}}) = \tfrac{1}{8} \\[5pt]
  N \sim P: &\qquad \Cov(\X_{\Nroot{ij}},\X_{\Proot{ik}}) = -\tfrac{1}{12} \text{ for } j \neq k   &&\Cov(\X_{\Nroot{ij}},\X_{\Proot{ki}}) = -\tfrac{1}{12} \text{ for } j \neq k  \\[5pt]
  &\qquad \Cov(\X_{\Nroot{ij}},\X_{\Proot{jk}}) = \tfrac{1}{12} \text{ for } i \neq k  &&\Cov(\X_{\Nroot{ij}},\X_{\Proot{kj}}) = \tfrac{1}{12} \text{ for } i \neq k  \\[5pt]
  P \sim O: &\qquad \Cov(\X_{\Proot{ij}},\X_{\Oroot{i}}) = \tfrac{1}{8},  &&\Cov(\X_{\Proot{ij}},\X_{\Oroot{j}}) = \tfrac{1}{8} \\[5pt]
  P \sim P: &\qquad \Cov(\X_{\Proot{ij}},\X_{\Proot{ik}}) = \tfrac{1}{12}, \quad \text{ if } j \neq k &&\Cov(\X_{\Proot{ij}},\X_{\Proot{ki}}) = \tfrac{1}{12} \quad \text{ if } j \neq k\\[5pt]
  &\qquad \Cov(\X_{\Proot{ij}},\X_{\Proot{jk}}) = \tfrac{1}{12}, \quad \text{ if } i \neq k &&\Cov(\X_{\Proot{ij}},\X_{\Proot{kj}}) = \tfrac{1}{12} \quad \text{ if } i \neq k
\end{align*}

We therefore compute case-by-case the cardinalities of those interactions.

\begin{proposition}
  \label{prop:interactionNP}
  We consider the two sets $N_a$ and $P_b$ for type $B_n$ with $1 \leq a \leq n$ and $3 \leq b \leq 2n-1$.
  Depending on satisfied inequalities of the parameters $n,a$ and $b$, the cardinalities of the four non-zero variance cases for $N_a \sim P_b$ above are as follows:
  
  \bigskip
  
  \begin{align*}
    \Big|\{\Nroot{ij},\Proot{kl} \in N_a \times P_b \mid i=k \}\Big| &=\qquad
    \begin{tabular}{c ||c|c}
      & \tiny{$n-a \leq \lfloor\tfrac{b-1}{2}\rfloor$} & \tiny{$\lfloor\tfrac{b-1}{2}\rfloor \leq n-a$} \\
      \hline \hline
      \tiny{$1 \leq b-n$} & $2n-(a+b)+1$ & $\lfloor\tfrac{1}{2} (2n-b+1) \rfloor$ \\ \hline
      \tiny{$b-n \leq 1$} & $n-a$ & $\lfloor\tfrac{b-1}{2}\rfloor$
    \end{tabular}\\[1em]
    \Big|\{\Nroot{ij},\Proot{kl} \in N_a \times P_b \mid i=\ell \}\Big| &=\qquad
    \begin{tabular}{c ||c|c}
      & \tiny{$b-1 \leq n-a$} & \tiny{$n-a\leq b-1$} \\
      \hline \hline
      \tiny{$\lceil \tfrac{b+1}{2} \rceil$} & $\lfloor \tfrac{b-1}{2} \rfloor$ & $\lfloor\tfrac{1}{2} (2n-2a-b+1) \rfloor$
    \end{tabular}\\[1em]
    \Big|\{\Nroot{ij},\Proot{kl} \in N_a \times P_b \mid j=k \}\Big| &=\qquad
    \begin{tabular}{c ||c}
      & \tiny{$\lfloor \tfrac{b-1}{2}\rfloor$} \\
      \hline \hline
      \tiny{$a+1 \leq b-n$} & $\lfloor\tfrac{1}{2} (2n-b+1)\rfloor$ \\ \hline
      \tiny{$b-n \leq a+1$} & $\lfloor\tfrac{1}{2} (b-2a-1)\rfloor$
    \end{tabular}\\[1em]
    \Big|\{\Nroot{ij},\Proot{kl} \in N_a \times P_b \mid j=\ell \}\Big| &=\qquad
    \begin{tabular}{c ||c|c}
      & \tiny{$n \leq b-1$} & \tiny{$b-1\leq n$} \\
      \hline \hline
      \tiny{$a+1 \leq \lceil \tfrac{b-1}{2} \rceil$} & $\lceil\tfrac{1}{2} (2n - b+1)\rceil$ & $\lfloor\tfrac{b+1}{2}\rfloor$ \\ \hline
      \tiny{$\lceil \tfrac{b-1}{2} \rceil \leq a+1$} & $n-a$ & $b-a-1$
    \end{tabular}\\[1em]
  \end{align*}
\end{proposition}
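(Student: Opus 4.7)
The plan is to reduce every count to the size of a single integer interval, obtained by parameterizing the pair by the shared index and intersecting the admissible ranges coming from $N_a$ and $P_b$. The case distinctions in the tables then simply record which endpoint of the intersection is active.

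First, I would fix once and for all the two parameterizations
\[
N_a = \big\{ \Nroot{i,i+a} \,\big|\, 1 \leq i \leq n-a \big\}, \qquad
P_b = \big\{ \Proot{k,b-k} \,\big|\, \max(1,b-n) \leq k \leq \lfloor (b-1)/2 \rfloor \big\},
\]
where the right endpoint for $P_b$ enforces $k<\ell$ and the left endpoint enforces $\ell \leq n$. The upper bound must be rewritten as $\lceil (b+1)/2\rceil \leq \ell \leq \min(n,b-1)$ when it is more convenient to parameterize by $\ell$ instead of $k$. Then for each of the four matching conditions I would isolate the ``free'' index (always the shared one) and determine the two intervals to intersect.

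Concretely, for $i=k$ the shared index $i$ must lie in
\[
\max(1,b-n) \leq i \leq \min\bigl(n-a,\lfloor (b-1)/2 \rfloor\bigr),
\]
and the four sub-cases in the first table correspond to the four ways of choosing which lower and upper bound is binding. Exactly the same template handles $i=\ell$ (shared index $i$, with lower bound $\lceil (b+1)/2\rceil$ and upper bound $\min(n-a,b-1)$), $j=k$ (shared index $j$, with bounds $\max(a+1,b-n) \leq j \leq \lfloor (b-1)/2\rfloor$, giving only two sub-cases because the $\lfloor (b-1)/2\rfloor$ upper bound is always binding), and $j=\ell$ (shared index $j$, with $\max(a+1,\lceil (b+1)/2\rceil) \leq j \leq \min(n,b-1)$). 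After computing the length of each interval, the identities
\[
\lfloor (b-1)/2 \rfloor + (n-b+1) = \lfloor (2n-b+1)/2 \rfloor, \qquad
b - \lceil (b+1)/2 \rceil = \lfloor (b-1)/2 \rfloor
\]
together with the analogous rewriting $\lfloor (b-1)/2\rfloor - a = \lfloor (b-2a-1)/2\rfloor$ convert the arithmetic into the exact forms appearing in the tables.

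The only real obstacle is bookkeeping: the floors and ceilings shift between the $k$- and $\ell$-parameterizations of $P_b$ and the case-split conditions must be written so that the interval is genuinely non-empty (so that counts are non-negative). I would carry out each of the four cases separately in the body of the proof, give the interval explicitly, and then match the two bounds against the row/column headers of the corresponding table. No deeper idea is involved beyond this parameterization, so the argument amounts to verifying eight to ten elementary equalities between integer-valued floor expressions, for which I would rely on the parity split ``$b$ even'' versus ``$b$ odd'' implicit in the tables.
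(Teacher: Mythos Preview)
Your proposal is correct and follows essentially the same approach as the paper: parameterize $N_a$ by $1\le i\le n-a$ and $P_b$ by $\max(1,b-n)\le k\le\lfloor(b-1)/2\rfloor$, reduce each count to the length of the intersection interval for the shared index, and let the case distinctions record which endpoints are binding. The paper only spells out the $i=k$ case and declares the remaining three analogous, so your sketch is in fact more detailed than the original.
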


\begin{proof}
  We give a proof for the first counting formula, the arguments for the others are analogous.
  The set $N_a$ contains exactly one pair $(i,i+a)$ for each $1 \leq i \leq n-a$ and $P_b$ contains exaclty one pair $(k,b-k)$ for each $\max(1, b-n) \leq k \leq \lfloor \tfrac{b-1}{2} \rfloor$.
  Combining these, we obtain that the cardinality for those pairs with $i=k$ equals
  \[
  \max(1, b-n) \leq i=k \leq \min (n-a, \lfloor \tfrac{b-1}{2} \rfloor).
  \]
  This leads to the four cases and the corresponding cardinalities.
\end{proof}

Given these cardinalities we can give a proof for the stated covariances between the roots of~$N_a$ and~$P_b$.

\begin{proof}[Proof of Proposition~\ref{prop:covariancesB}]
  We will give a proof for the first case, the others follow analogously tedious considerations.
  Suppose $a,b \leq d \leq \frac{n}{2}$ and~$d$ even.
  Investigating the first table under the given restrictions, the only relevant case is $b-n \leq 1, \lfloor \tfrac{b-1}{2} \rfloor \leq n-a$.
  We thus obtain
  \begin{align*}
    \sum\limits_{1 \leq a,b \leq d} \Big|\{\Nroot{ij},\Proot{k\ell} \in N_a \times P_b \mid i=k \}\Big|  = \sum\limits_{a=1}^d \sum\limits_{b=1}^d \lfloor \tfrac{b-1}{2} \rfloor = \frac{1}{4} d^2 (d-2)\,.
  \end{align*}
  Similarly, the investigation of the second table yields the only relevant case $b-1 \leq n-a$, leading to
  \begin{align*}
    \sum\limits_{1 \leq a,b \leq d} \Big|\{\Nroot{ij},\Proot{k\ell} \in N_a \times P_b \mid i=\ell \}\Big|  = \sum\limits_{a=1}^d \sum\limits_{b=1}^d \lfloor \tfrac{b-1}{2} \rfloor = \frac{1}{4} d^2 (d-2)\,.
  \end{align*}
  Investigating next the third table, the only relevant case is $b-n \leq a+1$.
  Since the restrictions on $j = k$ for are $\lfloor \tfrac{b-1}{2} \rfloor$ for the upper bound and $a+1$ for the lower bound, we obtain $b \geq 2(a+1)+1$ and thus
  \begin{align*}
    \sum\limits_{1 \leq a,b \leq d} \Big|\{\Nroot{ij},\Proot{k\ell} \in N_a \times P_b \mid j=k \}\Big|  = \sum\limits_{a=1}^{\frac{d-4}{2}} \sum\limits_{b=2a+3}^d \lfloor \tfrac{1}{2} (b-2a-1) \rfloor = \frac{1}{24} d (d^2-6d+8)\,.
  \end{align*}
  For the final fourth table, both cases with $b-1 \leq n$ are relevant, yielding
  \begin{align*}
    \sum\limits_{1 \leq a,b \leq d} &\Big|\{\Nroot{ij},\Proot{k\ell} \in N_a \times P_b \mid j=\ell \}\Big|  =\\
    &\sum\limits_{a=1}^{\tfrac{d-2}{2}} \sum\limits_{b=a+2}^{\min(2a+1,d)} (b-a-1)  + \sum\limits_{a=1}^{\tfrac{d-2}{2}} \sum\limits_{b=2a+2}^{d} \lceil \tfrac{b-1}{2} \rceil  =  \tfrac{1}{8} (d-2)d(d-1)\,.
  \end{align*}
  Combining these numbers of interaction with the right value of the covariance yields to
  \begin{align*}
    \Cov(\Pinv, \Ninv) &= \tfrac{1}{12} \Big(\tfrac{1}{24} d (d^2-6d+8) + \tfrac{1}{8}(d-2)d(d-1) - \tfrac{1}{4}d^2(d-2)- \tfrac{1}{4}d^2(d-2)\Big)\\
    & = -\tfrac{1}{36}d^3 + \tfrac{1 }{32} d^2 + \tfrac{7}{144}d \qedhere
  \end{align*}
\end{proof}

Some analog thoughts give us the cardinalities for other interactions of the three sets $N_a$, $P_a$ and $O_a$ as well as the covariances stated in Proposition~\ref{prop:covariancesB}.

\begin{proposition}
  \label{prop:interactionNO}
  We consider the two sets $N_a$ and $O_b$ for type $B_n$ with $1 \leq a,b \leq n$.
  Depending on satisfied relations between $n,a$ and $b$, the cardinalities of the two relavant cases for $N_a \sim O_b$ are as follows:
  
  \begin{align*}
    \Big|\{\Nroot{ij}, \Oroot{k} \in N_a \times O_b \mid i=k \}\Big| &= \left\{\begin{array}{lr}
      1, \quad 1 \leq b \leq n-a\\
      0, \quad \text{ else}.
    \end{array}\right.\\
    \Big|\{\Nroot{ij}, \Oroot{k} \in N_a \times O_b \mid j=k \}\Big| &= \left\{\begin{array}{lr}
      1, \quad a+1 \leq b \leq n\\
      0, \quad \text{ else}.
    \end{array}\right.\\
  \end{align*}
\end{proposition}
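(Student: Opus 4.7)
The proof will follow the same pattern as that of Proposition~\ref{prop:interactionNP}, but is considerably simpler because $O_b$ contains at most a single root. First I would recall the explicit parameterizations: the set $N_a$ consists of the roots $\Nroot{i,i+a}$ for $1 \leq i \leq n-a$ (so $N_a = \emptyset$ when $a \geq n$), while $O_b$ consists of the single root $\Oroot{b}$ when $1 \leq b \leq n$, and is empty otherwise. Consequently, for any fixed choice of $a$ and $b$, each of the two cardinalities to be computed is either $0$ or $1$.

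For the first count, pairs $(\Nroot{ij}, \Oroot{k})$ with $i = k$ correspond to solutions of the system $k = b$ and $i = k$, together with the constraints $1 \leq i \leq n-a$ (from membership in $N_a$) and $1 \leq b \leq n$ (from membership in $O_b$). A valid pair exists if and only if $i = k = b$ satisfies $1 \leq b \leq n-a$, in which case the pair $(\Nroot{b,b+a}, \Oroot{b})$ is the unique witness. For the second count, pairs with $j = k$ force $i + a = b$, so $i = b - a$; membership in $N_a$ demands $1 \leq b - a \leq n-a$, which is equivalent to $a+1 \leq b \leq n$, and the unique witness is $(\Nroot{b-a,b}, \Oroot{b})$.

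There is no real obstacle here: once the parameterizations of $N_a$ and $O_b$ are written down, the result reduces to the elementary observation that a single linear equation in the one free variable $i$ has a solution in the prescribed range precisely under the stated inequalities. The only item requiring a touch of care is correctly translating the bound $1 \leq i \leq n-a$ under the substitution $i = b-a$ to obtain the range $a+1 \leq b \leq n$ in the second case.
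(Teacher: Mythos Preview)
Your proposal is correct and follows precisely the approach the paper indicates: the paper does not give an explicit proof of this proposition, merely remarking that ``analog thoughts'' to the proof of Proposition~\ref{prop:interactionNP} yield these cardinalities, and your argument is exactly that analogous computation written out in full.
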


\begin{proposition}
  \label{prop:interactionPO}
  We consider the two sets $P_a$ and $O_b$ for type $B_n$ with $3 \leq a \leq 2n-1$ and $1 \leq b \leq n$.
  Depending on satisfied relations between $n,a$ and $b$, the cardinalities of the two relavant cases for $P_a \sim O_b$ are as follows:
  
  \begin{align*}
    \Big|\{\Proot{ij},\Oroot{k} \in P_a \times O_b \mid i=k \}\Big| &= \left\{\begin{array}{lr}
      1, \quad \max(1, a-n) \leq b \leq \lfloor\tfrac{a-1}{2}\rfloor\\
      0, \quad \text{ else}.
    \end{array}\right.\\
    \Big|\{\Proot{ij}, \Oroot{k} \in P_a \times O_b \mid j=k \}\Big| &= \left\{\begin{array}{lr}
      1, \quad \lfloor\tfrac{a+1}{2}\rfloor \leq b \leq \min(n, a-1)\\
      0, \quad \text{ else}.
    \end{array}\right.\\
  \end{align*}
\end{proposition}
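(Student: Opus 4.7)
The plan is to prove Proposition~\ref{prop:interactionPO} by direct enumeration, exploiting the fact that the set $O_b$ has at most one element. Concretely, $O_b = \{\Oroot{b}\}$ when $1 \leq b \leq n$ and is empty otherwise, so both cardinalities under consideration are either $0$ or $1$; the entire task is therefore to characterize precisely when the value is $1$.

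For the first count, I would parametrize $P_a$ as
\[
P_a = \bigl\{ \Proot{ij} \mid \max(1,a-n) \leq i \leq \lfloor (a-1)/2 \rfloor,\ j = a-i \bigr\},
\]
which follows directly from the definition together with the constraints $1 \leq i < j \leq n$ and $i+j = a$. A pair $(\Proot{ij},\Oroot{k}) \in P_a \times O_b$ with $i = k$ exists if and only if $b$ falls in the admissible range of first coordinates, namely $\max(1,a-n) \leq b \leq \lfloor (a-1)/2 \rfloor$, and in that case there is exactly one such pair (with $j = a-b$). This is exactly the claimed range for the first table.

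For the second count, I would similarly parametrize $P_a$ by its second coordinate,
\[
P_a = \bigl\{ \Proot{ij} \mid \lfloor (a+1)/2\rfloor \leq j \leq \min(n,a-1),\ i = a-j \bigr\},
\]
where the lower bound $\lfloor (a+1)/2\rfloor$ enforces $i < j$ and the upper bound $\min(n,a-1)$ enforces $j \leq n$ and $i \geq 1$. A pair $(\Proot{ij},\Oroot{k}) \in P_a \times O_b$ with $j = k$ then exists precisely when $\lfloor (a+1)/2\rfloor \leq b \leq \min(n,a-1)$, in which case it is unique.

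There is no real obstacle here: once the two parametrizations of $P_a$ are written down, the rest is the observation that intersecting with a singleton $O_b$ gives $1$ exactly when $b$ sits in the correct interval of indices. The only mild care needed is getting the floor/ceiling conventions right at the boundary $i = j$, which is why the upper limit of the first range reads $\lfloor (a-1)/2\rfloor$ and the lower limit of the second range reads $\lfloor (a+1)/2\rfloor$; a short check separating $a$ even from $a$ odd confirms that these floors correctly exclude the forbidden case $i = j = a/2$.
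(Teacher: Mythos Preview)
Your approach is exactly what the paper has in mind: it gives no separate proof here, deferring to the analogous argument written out for Proposition~\ref{prop:interactionNP}, and your parametrize-$P_a$-by-one-coordinate, intersect-with-the-singleton-$O_b$ argument is precisely that.

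There is, however, a genuine slip in your final check. You assert that the lower bound $\lfloor (a+1)/2\rfloor$ in the second parametrization ``correctly exclude[s] the forbidden case $i=j=a/2$''. For odd~$a$ this is fine, but for even~$a$ one has $\lfloor (a+1)/2\rfloor = a/2$, and taking $j=a/2$ forces $i=a-j=a/2=j$, which is exactly the forbidden diagonal. The constraint $i<j$ with $i+j=a$ gives $j>a/2$, hence $j\geq \lfloor a/2\rfloor+1=\lceil (a+1)/2\rceil$; for even~$a$ this is $a/2+1$, not $a/2$. Concretely, for $a=4$ one has $P_4=\{\Proot{13}\}$, so the only admissible second coordinate is $j=3$, yet the stated interval $\lfloor 5/2\rfloor\le b\le \min(n,3)$ would also admit $b=2$. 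So your parity check does not actually go through for even~$a$, and in fact what it uncovers is that the lower bound in the second displayed formula of the proposition should read $\lceil (a+1)/2\rceil$ rather than $\lfloor (a+1)/2\rfloor$.
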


\begin{proposition}
  \label{prop:interactionPP}
  We consider the two sets $P_a$ and $P_b$ for type $B_n$ with $3 \leq a,b \leq 2n-1$.
  Depending on satisfied relations between $n,a$ and $b$, the cardinalities of the two relavant cases for $P_a \sim P_b$ are as follows:
  \begin{align*}
    \Big|\{\Proot{ij},\Proot{kl} \in P_a \times P_b \mid i=k \}\Big| &=
    \begin{tabular}{c ||c|c}
      & \tiny{$\lfloor \tfrac{a-1}{2} \rfloor \leq \lfloor \tfrac{b-1}{2} \rfloor$} & \tiny{$\lfloor \tfrac{b-1}{2} \rfloor \leq \lfloor \tfrac{a-1}{2} \rfloor$} \\
      \hline \hline
      \tiny{$a-n, b-n \leq 1$} & $\lfloor \tfrac{a-1}{2} \rfloor$ & $\lfloor \tfrac{b-1}{2} \rfloor $ \\ \hline
      \tiny{$1, b-n \leq a-n$} & $\lfloor \tfrac{1}{2} ( 2n-a+1) \rfloor$ & $\lfloor \tfrac{1}{2}  (2n-2a+b+1)\rfloor$ \\ \hline
      \tiny{$1, a-n \leq b-n$} & $\lfloor \tfrac{1}{2}  (2n-2b+a+1)\rfloor$ &  $\lfloor \tfrac{1}{2} ( 2n-b+1) \rfloor$
    \end{tabular}\\[1em]
    \Big|\{\Proot{ij},\Proot{kl} \in P_a \times P_b \mid i=\ell \}\Big| &=
    \begin{tabular}{c ||c|c|c}
      & \tiny{$\lfloor \tfrac{a-1}{2} \rfloor \leq n, b-1$} & \tiny{$n \leq \lfloor \tfrac{a-1}{2} \rfloor, b-1$} & \tiny{$b-1 \leq \lfloor \tfrac{a-1}{2} \rfloor, n$}\\
      \hline \hline
      \tiny{$a-n, \lceil \tfrac{b+1}{2} \rceil \leq 1$} & $\lfloor \tfrac{a-1}{2} \rfloor$ & $n $ & $b-1$ \\ \hline
      \tiny{$ 1, \lceil \tfrac{b+1}{2} \rceil \leq a-n $} & $\lfloor \tfrac{1}{2} (2n-a+1) \rfloor$ & $2n-a+1$ & $n+b-a $ \\ \hline
      \tiny{$ 1, a-n \leq \lceil \tfrac{b+1}{2} \rceil$} & $\lfloor \tfrac{a-1}{2} \rfloor - \lceil \tfrac{b+1}{2} \rceil +1$ & $\lfloor \tfrac{1}{2}(2n-b+1) \rfloor$ & $\lfloor \tfrac{1}{2} (b-1) \rfloor$
    \end{tabular}\\[1em]
    \Big|\{\Proot{ij},\Proot{kl} \in P_a \times P_b \mid j=k \}\Big| &=
    \begin{tabular}{c ||c|c|c}
      & \tiny{$\lfloor \tfrac{b-1}{2} \rfloor\leq n, a-1$} & \tiny{$n \leq \lfloor \tfrac{b-1}{2} \rfloor, a-1$} & \tiny{$a-1 \leq \lfloor \tfrac{b-1}{2} \rfloor, n$}\\
      \hline \hline
      \tiny{$ 1, b-n \leq \lceil \tfrac{a+1}{2} \rceil$} & $\lfloor \tfrac{b-1}{2} \rfloor - \lceil \tfrac{a+1}{2} \rceil +1$ & $\lfloor \tfrac{1}{2} ( 2n-a+1) \rfloor $ & $ \lfloor \tfrac{1}{2} ( a-1) \rfloor $ \\ \hline
      \tiny{$ \lceil \tfrac{a+1}{2} \rceil , b-n \leq 1 $} & $\lfloor \tfrac{b-1}{2} \rfloor$ & $n$ & $a-1$ \\ \hline
      \tiny{$ 1, \lceil \tfrac{a+1}{2} \rceil \leq b-n  $} & $\lfloor \tfrac{1}{2} (2n-b+1) \rfloor $ & $2n-b+1$ & $n+a-b$
    \end{tabular}\\[1em]
    \Big|\{\Proot{ij},\Proot{kl} \in P_a \times P_b \mid i=k \}\Big| &=
    \begin{tabular}{c ||c|c|c}
      & \tiny{$n \leq a-1, b-1$} & \tiny{$a-1 \leq n, b-1$} & \tiny{$b-1 \leq n, b-1$}\\
      \hline \hline
      \tiny{$ \lceil \tfrac{b+1}{2} \rceil \leq \lceil \tfrac{a+1}{2} \rceil$} & $\lfloor \tfrac{1}{2} ( 2n-a+1) \rfloor $ & $\lfloor \tfrac{1}{2} (a-1) \rfloor  $ & $ \lfloor \tfrac{1}{2} ( 2b-a-1) \rfloor  $ \\ \hline
      \tiny{$ \lceil \tfrac{a+1}{2} \rceil \leq \lceil \tfrac{b+1}{2} \rceil $} & $\lfloor \tfrac{1}{2} ( 2n-b+1) \rfloor $ & $\lfloor \tfrac{1}{2} ( 2a-b-1) \rfloor$ & $\lfloor \tfrac{1}{2} (b-1) \rfloor$
    \end{tabular}
  \end{align*}
\end{proposition}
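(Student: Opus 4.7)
The plan is to proceed in direct analogy with the proofs of Propositions~\ref{prop:interactionNP},~\ref{prop:interactionNO}, and~\ref{prop:interactionPO}: in each of the four configurations, pinning down the shared index reduces the problem to counting a single integer parameter ranging in an interval whose endpoints depend on $n,a,b$, and the case distinctions in the tables arise precisely from comparing these endpoints.

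Concretely, I would first record the two parameterizations
\[
P_a = \bigl\{ \Proot{i,a-i} \,\bigm|\, \max(1,a-n) \leq i \leq \lfloor \tfrac{a-1}{2}\rfloor \bigr\}, \quad
P_b = \bigl\{ \Proot{k,b-k} \,\bigm|\, \max(1,b-n) \leq k \leq \lfloor \tfrac{b-1}{2}\rfloor \bigr\},
\]
obtained from $i<j=a-i \leq n$ and $k<\ell=b-k \leq n$. Each of the four matching conditions fixes one of $i,k$ in terms of the other, so the cardinality we seek is the number of integer values of a single parameter (say $i$) lying simultaneously in the ranges imposed by $P_a$ and by the corresponding constraint inherited from $P_b$. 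For example, for $i=k$ one reads off the common range
\[
\max(1,a-n,b-n) \leq i \leq \min\bigl(\lfloor \tfrac{a-1}{2}\rfloor,\lfloor \tfrac{b-1}{2}\rfloor\bigr);
\]
for $i=\ell$, the substitution $k=b-i$ forces $i\geq \lceil \tfrac{b+1}{2}\rceil$ (from $k<\ell$) and $i\leq b-1$ (from $k\geq 1$), in addition to the $P_a$-constraints on~$i$; the cases $j=k$ and $j=\ell$ are handled symmetrically by setting $j=a-i$ and tracking the corresponding inequalities.

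Next, I would carry out the case split by comparing each lower bound and each upper bound appearing in the effective interval. The lower bound is determined by which of $1$, $a-n$, $b-n$, or (in the $i=\ell$ and $j=k$ cases) $\lceil\tfrac{b+1}{2}\rceil$ resp.\ $\lceil\tfrac{a+1}{2}\rceil$ is largest; the upper bound is determined by which of $\lfloor \tfrac{a-1}{2}\rfloor$, $\lfloor \tfrac{b-1}{2}\rfloor$, $a-1$, $b-1$, or $n$ is smallest. This explains exactly the row/column structure of each table: the row index records which of the candidate lower bounds is active, and the column index which of the candidate upper bounds is. Substituting the two active bounds into (upper) $-$ (lower) $+\,1$ gives the explicit entries, modulo standard manipulations of $\lfloor\cdot\rfloor$ and $\lceil\cdot\rceil$ for the parity-dependent terms.

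The main obstacle is not conceptual but organisational: the tables involve three independent parameters $n,a,b$ together with parity considerations, and one must verify that the listed rows and columns cover every feasible configuration (in particular ensuring that infeasible lower/upper combinations either coincide with the listed ones or produce empty intersections, so that no case is omitted and none is double-counted). A convenient way to manage this bookkeeping is to carry out each of the four counts in a uniform template: list the candidate bounds, enumerate the partial order of their possible orderings, and check each leaf against the admissibility conditions from $P_a\cup P_b$; the resulting finite computer-style check produces precisely the four tables stated. With that, the proposition follows immediately by reading off each entry.
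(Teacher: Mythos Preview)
Your proposal is correct and follows essentially the same approach as the paper: the paper does not give a separate proof of this proposition but simply remarks that ``analog thoughts give us the cardinalities for other interactions,'' referring back to the proof of Proposition~\ref{prop:interactionNP}, which parameterizes each set by a single integer index, intersects the resulting ranges, and reads off the cases from the competing lower and upper bounds---exactly what you describe.
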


\begin{proposition}
  \label{prop:interactionOO}
  We consider the two sets $O_a$ and $O_b$ for type $B_n$ with $1 \leq a,b \leq n$.
  The relevant interactions of $O_a \sim O_b$ are as follows:
  
  \begin{align*}
    \Big|\{\Oroot{i},\Oroot{j} \in O_a \times O_b \mid i=j \}\Big| &= \left\{\begin{array}{lr}
      1, \quad a=b\\
      0, \quad \text{ else}.
    \end{array}\right.
  \end{align*}
\end{proposition}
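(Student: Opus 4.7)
The plan is essentially to unfold the definitions, since each set $O_a$ is a singleton. Recall from the classification of positive roots in type $B_n$ that the short roots are exactly $\{e_1, \dots, e_n\}$, and by the height computation $\height(e_i) = i$. Therefore the set
\[
O_a = \big\{ \Oroot{i} \mid 1 \leq i \leq n,\ i = a \big\}
\]
contains the single element $\Oroot{a}$ when $1 \leq a \leq n$, and is empty otherwise.

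Given this, I would observe that $O_a \times O_b$ has cardinality at most one, with its unique element being $(\Oroot{a}, \Oroot{b})$ whenever $1 \leq a, b \leq n$. The condition $i = j$ appearing in the statement then reduces to the equality $a = b$, giving the claimed count of $1$ in that case and $0$ otherwise.

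The main (minor) step is simply to justify that all interactions between short roots in type $B_n$ that contribute a non-zero covariance reduce to the case $i = j$, so no other cardinality needs to be computed. This follows immediately from the listing of covariances for $O \sim O$ in the table preceding Proposition~\ref{prop:interactionNP}, where the only entry with $\ord(\Oroot{i}, \Oroot{j}) \neq 2$ (equivalently, $\Cov(\X_{\Oroot{i}}, \X_{\Oroot{j}}) \neq 0$) is $i = j$, in which case the covariance is the variance $\tfrac{1}{4}$. There is no real obstacle here; the entire proposition is a direct unpacking of $|O_a| = 1$.
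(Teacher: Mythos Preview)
Your proposal is correct and matches the paper's treatment: the paper gives no explicit proof of this proposition, leaving it (together with the other interaction counts) as ``analogous thoughts'' to the proof of Proposition~\ref{prop:interactionNP}, and your direct unpacking of $|O_a|=1$ is exactly that. One minor inaccuracy: the covariance list preceding Proposition~\ref{prop:interactionNP} does not actually contain an $O \sim O$ entry; the fact that $\Cov(\X_{\Oroot{i}},\X_{\Oroot{j}})=0$ for $i\neq j$ instead follows from $\ord(s_{\Oroot{i}}s_{\Oroot{j}})=2$ in Example~\ref{ex:B} together with Theorem~\ref{thm:Cov}.
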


Given all the covariances for different tuples of  roots one can get the variance by summing over all cases.

\begin{proof}[Proof of Theorem~\ref{thm:VarB}]
  To prove the stated variances one simply sum up all covariances for given restrictions on $d$. As an example we give the concrete summation for the first case of inversions for $d \leq \tfrac{n}{2}$ with $d$ even.
  Using Lemma~\ref{lem:cardNa}, Propositions~\ref{prop:interactionNP}, \ref{prop:interactionNO}, \ref{prop:interactionPO}, \ref{prop:interactionPP} and \ref{prop:interactionOO} leads to
  \begin{align*}
    \Var{\X_{\Phidinv{d}}} &= \Cov{(\X_{N \leq d}, \X_{N \leq d})} + 2 \Cov{(\X_{P \leq d}, \X_{N \leq d})} + 2 \Cov{(\X_{N \leq d}, \X_{O \leq d})} + \\[1em]
    & \qquad 2 \Cov{(\X_{P \leq d}, \X_{O \leq d})} + \Cov{(\X_{P \leq d}, \X_{P \leq d})} + \Cov{(\X_{O \leq d}, \X_{O \leq d})}\\[1em]
    &= \Big[\frac{1}{72} d (4d^2+3d+6n-1)\Big] + \Big[-\tfrac{1}{18} d^3 + \tfrac{1}{16} d^2 + \tfrac{7}{72} d\Big] + \Big[- \tfrac{1}{8} d^2 - \tfrac{1}{8}d \Big] + \\[1em]
    & \qquad \Big[\tfrac{1}{8}d^2 - \tfrac{1}{4} d \Big]+ \Big[\tfrac{1}{36}d^3 - \tfrac{1}{12} d^2 + \tfrac{1}{18}d \Big] + \tfrac{1}{4} d\\[1em]
    &=\frac{1}{36} d^3 + \frac{1}{48} d^2 + \Big(\frac{1}{12} n + \frac{1}{72}\Big) d \qedhere
  \end{align*}
\end{proof}

\subsection{Type $C_n$}
In this section, we provide the variances for $d$-descents and $d$-inversions of type $C_n$.

\begin{theorem}
  \label{thm:VarC}
  Let $\X_\Psi$ as in~\eqref{def:RV} and $\Phiddes{d}, \Phidinv{d}$ as in~\eqref{eq:ddesdinv}.
  We then have for $1 \leq d \leq 2n-1$ that¸
  \begin{align*}
    &\Var{\X_{\Phiddes{d}}} = \\
    &\quad \left\{\begin{array}{lr}
      \frac{1}{24} d + \frac{1}{12} n   & \text{\hspace*{267pt}if } d \leq \frac{2}{3}n  \text{, } d \text{ even} \\[1em]
      \frac{1}{24} d + \frac{1}{12} n  + \frac{1}{24}  & \text{if } d \leq \frac{n}{2} \text{, } d \text{ odd} \\[2em]
      \frac{1}{24} d + \frac{1}{12} n  & \text{if } \frac{2}{3} n \leq d \leq n \text{, } d \text{ even} \\[1em]
      \frac{1}{24} d + \frac{1}{12} n  + \frac{1}{8}  & \text{if } \frac{2}{3} n < d \leq n \text{, } d \text{ odd} \\[2em]
      -\frac{1}{8} d + \frac{1}{4} n  & \text{if } n \leq d \text{, } d \text{ even} \\[1em]
      -\frac{1}{8} d + \frac{1}{4} n  + \frac{1}{8}  & \text{if } n \leq d \text{, } d \text{ odd}
    \end{array}\right.
    \\[1em]
    &\Var{\X_{\Phidinv{d}}} = \\
    &\quad \left\{\begin{array}{lr}
      \frac{1}{36} d^3 + \frac{1}{48} d^2 + (\frac{1}{12} n + \frac{1}{72}) d & \text{if }  d \leq \frac{2}{3}n \text{, } d \text{ even} \\[1em]
      \frac{1}{36} d^3 + \frac{1}{48} d^2 + (\frac{1}{12} n + \frac{1}{72}) d + \frac{1}{48} & \text{if } d \leq \frac{2}{3}n \text{, } d \text{ odd} \\[2em]
      \frac{1}{36} d^3 + \frac{11}{96} d^2 + ( - \frac{1}{24} n + \frac{11}{144}) d + (\frac{1}{24}n^2- \frac{1}{24}) & \text{if } \frac{2}{3} n \leq d \leq n \text{, } d \text{ even} \\[1em]
      \frac{1}{36} d^3 + \frac{11}{96} d^2 + ( - \frac{1}{24} n + \frac{5}{36}) d + (\frac{1}{24}n^2- \frac{1}{12}n + \frac{5}{96}) & \text{if } \frac{2}{3} n < d \leq n \text{, } d \text{ odd} \\[2em]
      -\frac{1}{12} d^3 + (\frac{1}{3} n - \frac{13}{96}) d^2 + (-\frac{1}{3} n^2+ \frac{11}{24} n - \frac{1}{16}) d +  (\frac{1}{9} n^3 - \frac{5}{24} n^2 + \frac{7}{72}n) & \text{if } n \leq d  \text{, } d \text{ even} \\[1em]
      -\frac{1}{12} d^3 + (\frac{1}{3} n - \frac{13}{96}) d^2 + (-\frac{1}{3} n^2+ \frac{11}{24} n - \frac{1}{16}) d +  (\frac{1}{9} n^3 - \frac{5}{24} n^2 + \frac{1}{18}n + \frac{5}{96}) & \text{if } n \leq d  \text{, } d \text{ odd} \\[1em]
    \end{array}\right.
  \end{align*}
\end{theorem}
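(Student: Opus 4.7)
The plan is to follow the template of Theorem~\ref{thm:VarB}: partition the positive roots of type~$C_n$ by their height in the root poset, enumerate each class, apply Theorem~\ref{thm:Cov} to read off every pairwise covariance, and then sum.

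The only departure from the type~$B_n$ calculation lies in the height function. Taking the simple roots of~$C_n$ to be $\{2e_1\}\cup\{e_{i+1}-e_i\mid 1\le i<n\}$, one directly computes
\[
\height(\Nroot{ij})=j-i,\qquad \height(\Oroot{i})=2i-1,\qquad \height(\Proot{ij})=i+j-1.
\]
Consequently, while $|N_a|=n-a$ holds for $1\le a<n$ exactly as in type~$B_n$, the cardinality $|O_a|\in\{0,1\}$ is now nonzero precisely when $a$ is odd (with $1\le a\le 2n-1$), and $|P_a|$ equals $\lfloor a/2\rfloor$ for $2\le a\le n$ and $n-\lceil a/2\rceil$ for $n\le a\le 2n-2$. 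In particular, the index ranges of
\[
O_{\le d}=\{\Oroot{i}\mid 2i-1\le d\},\qquad P_{\le d}=\{\Proot{ij}\mid i+j\le d+1\}
\]
inside $\Phidinv{d}$ differ from those in type~$B_n$, while $N_{\le d}$ is identical.

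The individual pairwise covariances $\Cov(\X_\beta,\X_\gamma)$ coincide with their type-$B_n$ counterparts: Theorem~\ref{thm:Cov} depends only on the angle between~$\beta$ and~$\gamma$, and the long root $\Oroot{i}=2e_i$ of~$C_n$ is parallel to the short root $e_i$ of~$B_n$, while the $\Nroot{\cdot}$ and $\Proot{\cdot}$ roots are literally the same. Hence the index-matching cardinalities collected in Propositions~\ref{prop:interactionNP}, \ref{prop:interactionNO}, \ref{prop:interactionPO}, \ref{prop:interactionPP}, and~\ref{prop:interactionOO}, which are purely combinatorial statements about how two roots may share indices, transfer verbatim once the~$C_n$-height ranges are substituted for $a,b$.

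What remains is to collect these contributions into piecewise polynomial formulas. The regime boundaries $d\approx\tfrac23 n$ and $d=n$ arise from the points at which the pairs $\Proot{ij}$ with $i+j\le d+1$ begin to saturate the constraint $j\le n$, and the parity splits come from whether $2i-1=d$ admits a positive integer solution $i\le n$ together with the $\lfloor a/2\rfloor$ terms appearing in $|P_a|$. Within each of the six resulting regimes the variance reduces to a finite sum of polynomial terms over rectangular or triangular index ranges, which can be evaluated with the standard power-sum identities $\sum k$, $\sum k^2$, $\sum k^3$. The main obstacle is therefore not conceptual but the organizational bookkeeping of the case-by-case computation; in parallel with the proof of Theorem~\ref{thm:VarB} we plan to carry out one representative subcase in full and leave the remaining pieces to fully analogous computations.
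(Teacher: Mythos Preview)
Your proposal is correct and matches the paper's own treatment essentially verbatim: the paper likewise notes that the only difference from type~$B_n$ is the height function (giving exactly your formulas $\height(\Nroot{ij})=j-i$, $\height(\Oroot{i})=2i-1$, $\height(\Proot{ij})=i+j-1$), observes that all covariance computations are then completely analogous, and suppresses the tedious case-by-case bookkeeping. Your additional remarks on why the pairwise covariances transfer unchanged and where the regime boundaries arise are correct elaborations of points the paper leaves implicit.
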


The only difference between types~$B_n$ and~$C_n$ is the changes of the heights of the roots.
In type~$C_n$, these are
\begin{align*}
  N_a  &= \big\{\Nroot{ij} \mid 1 \leq i < j \leq n \mid j-i = a \big\} \\
  O_a  &= \big\{\Oroot{i} \mid 1 \leq i \leq n \mid 2i - 1 = a \big\} \\
  P_a  &= \big\{\Proot{ij} \mid 1 \leq i < j \leq n \mid j+i - 1 = a\big\}\,
\end{align*}
so that the roots of height~$a$ are $N_a \cup O_a \cup P_a$.
All calculations are completely analogous to those for type~$B_n$.
Since these do not provide any additional insight, we suppress these tedious calculations.

\subsection{Type~$D_n$}

In this section, we finally provide the variances for $d$-descents and $d$-inversions of type~$D_{n}$.

\begin{theorem}
  \label{thm:VarD}
  Let $\X_\Psi$ as in~\eqref{def:RV} and $\Phiddes{d}, \Phidinv{d}$ as in~\eqref{eq:ddesdinv}.
  We then have for $1 \leq d \leq 2n-3$ that
  \begin{align*}
    &\Var{\X_{\Phiddes{d}}} = \\
    &\quad \left\{\begin{array}{lr}
      \frac{1}{24} d + \frac{1}{12} n  + \frac{1}{6}  & \text{\hspace*{266pt}if } d < \frac{n}{2} \text{, } d \text{ even} \\[1em]
      \frac{1}{24} d + \frac{1}{12} n  + \frac{1}{8}  & \text{if } d < \frac{n}{2} \text{, } d \text{ odd} \\[2em]
      \frac{1}{24} d + \frac{1}{12} n  + \frac{1}{3}  & \text{if } \frac{n}{2} \leq d < \frac{2}{3}n \text{, } d \text{ even} \\[1em]
      \frac{1}{24} d + \frac{1}{12} n  + \frac{7}{24}  & \text{if } \frac{n}{2} \leq d \leq \frac{2}{3}n \text{, } d \text{ odd} \\[2em]
      \frac{1}{24} d + \frac{1}{12} n + \frac{1}{6} & \text{if } \frac{2}{3} n \leq d < n \text{, } d \text{ even} \\[1em]
      \frac{1}{24} d + \frac{1}{12} n  + \frac{7}{24}  & \text{if } \frac{2}{3} n \leq d < n \text{, } d \text{ odd} \\[2em]
      -\frac{1}{8} d + \frac{1}{4} n - \frac{1}{4} & \text{if } n \leq d \text{, } d \text{ even} \\[1em]
      -\frac{1}{8} d + \frac{1}{4} n  - \frac{1}{8}  & \text{if } n \leq d \text{, } d \text{ odd}
    \end{array}\right.
    \\[1em]
    &\Var{\X_{\Phidinv{d}}} = \\
    &\quad \left\{\begin{array}{lr}
      \frac{1}{36} d^3 + \frac{1}{48} d^2 + (\frac{1}{12} n + \frac{1}{72}) d & \text{if } d < \frac{n}{2} \text{, } d \text{ even} \\[1em]
      \frac{1}{36} d^3 + \frac{1}{48} d^2 + (\frac{1}{12} n + \frac{1}{18}) d + \frac{1}{16} & \text{if } d < \frac{n}{2} \text{, } d \text{ odd} \\[2em]
      \frac{1}{36} d^3 + \frac{17}{48} d^2 + (-\frac{1}{4} n + \frac{5}{9}) d + (\frac{1}{12} n^2 - \frac{1}{4}n + \frac{1}{6}) & \text{if } \frac{n}{2} \leq d < \frac{2}{3}n \text{, } d \text{ even} \\[1em]
      \frac{1}{36} d^3 + \frac{17}{48} d^2 + (-\frac{1}{4} n + \frac{5}{9}) d + (\frac{1}{12}n^2 - \frac{1}{4}n + \frac{11}{48}) & \text{if } \frac{n}{2} \leq d < \frac{2}{3}n \text{, } d \text{ odd} \\[2em]
      \frac{1}{36} d^3 + \frac{1}{6} d^2 + \frac{13}{72} d & \text{if } \frac{2}{3} n \leq d < n \text{, } d \text{ even} \\[1em]
      \frac{1}{36} d^3 + \frac{1}{6} d^2 + \frac{11}{36} d + (-\frac{1}{12}n - \frac{1}{6}) & \text{if } \frac{2}{3} n \leq d < n  \text{, } d \text{ odd} \\[2em]
      -\frac{1}{12} d^3 + (\frac{1}{3} n - \frac{5}{12}) d^2 + (-\frac{1}{3} n^2+  n - \frac{17}{24}) d +  (\frac{1}{9} n^3 - \frac{5}{12} n^2 + \frac{13}{18}n - \frac{5}{12}) & \text{if } n \leq d  \text{, } d \text{ even} \\[1em]
      -\frac{1}{12} d^3 + (\frac{1}{3} n - \frac{5}{12}) d^2 + (-\frac{1}{3} n^2+ n - \frac{7}{12}) d +  (\frac{1}{9} n^3 - \frac{5}{12} n^2 + \frac{23}{36}n - \frac{1}{4}) & \text{if } n \leq d  \text{, } d \text{ odd} \\[1em]
    \end{array}\right.
  \end{align*}
\end{theorem}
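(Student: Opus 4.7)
The plan is to follow the same template as the proof of Theorem~\ref{thm:VarB} for type~$B_n$, adapted to the fact that the type~$D_n$ root system contains only the two families $\Nroot{ij} = e_j-e_i$ and $\Proot{ij} = e_j+e_i$ (with $1\le i<j\le n$), but no short roots $\Oroot{i}$. First I would partition the positive roots by height, setting
\[
  N_a = \{\Nroot{ij} \mid j-i = a\}\qquad\text{and}\qquad P_a = \{\Proot{ij} \mid i+j-2 = a\},
\]
where the heights come from the expansions $e_j-e_i = \sum_{k=i}^{j-1}(e_{k+1}-e_k)$ and $e_j+e_i = (e_2+e_1) + \sum_{k=1}^{i-1}(e_{k+1}-e_k) + \sum_{\ell=1}^{j-1}(e_{\ell+1}-e_\ell)$. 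This yields $|N_a| = n-a$ for $1 \le a \le n-1$, and a case-by-case formula for $|P_a|$ (nonempty for $1 \le a \le 2n-3$, with parity-dependent cardinalities whose maximum is attained near $a = n-1$).

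Second, I would use Theorem~\ref{thm:Cov} to record the pairwise covariances. Since any two roots in a type~$D_n$ system span a rank-$2$ subspace of type $A_1\times A_1$ or $A_2$, all nonzero covariances lie in $\{+\tfrac{1}{12},-\tfrac{1}{12}\}$, governed precisely by whether two roots share an index and by the signs $\pm$ attached to that shared index in each root. This reduces the problem to an index-overlap enumeration for each of the pair-types $N_a\sim N_b$, $N_a\sim P_b$, and $P_a\sim P_b$.

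Third, I would produce the four tables giving $|\{\Nroot{ij},\Proot{k\ell}\in N_a\times P_b \mid \cdot\}|$ for each of the matching conditions $i=k$, $i=\ell$, $j=k$, $j=\ell$, and similarly for $P_a\sim P_b$, exactly as in Propositions~\ref{prop:interactionNP} and~\ref{prop:interactionPP}, but with the index-range shift coming from the relation $i+j=a+2$ rather than $i+j=a$. The $N_a\sim N_b$ counts are inherited verbatim from the type~$A$ analysis in Lemma~\ref{lem:cardN}. These counts differ from type~$B_n$ only through the shift in the range of admissible $(i,j)$ pairs defining $P_a$, and through the absence of any $N\sim O$, $P\sim O$, $O\sim O$ contributions.

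Finally, I would assemble the variance
\[
  \Var{\X_{\Phidinv{d}}} = \Cov(\Ninv,\Ninv) + 2\,\Cov(\Pinv,\Ninv) + \Cov(\Pinv,\Pinv)
\]
and, for $d$-descents, the single-height analogue, by summing the covariance $\pm\tfrac{1}{12}$ over each interaction count with the appropriate sign, then splitting the sum according to which region of the piecewise table an index pair $(a,b)$ with $1\le a,b\le d$ falls into. The regime breakpoints $d<n/2$, $n/2\le d<2n/3$, $2n/3\le d<n$, and $n\le d$ arise exactly because the index-range constraints in the $P$-tables flip at these thresholds, and the parity dependence in the final formula comes from the floor and ceiling functions appearing in $|P_a|$ and in the row/column totals. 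The principal obstacle is not conceptual but combinatorial: keeping careful track of which sub-table applies under each joint condition on $(a,b,d,n)$ and correctly handling the shifted indexing of $P_a$, so that the telescoping of floor/ceiling sums produces the stated polynomial pieces with their exact rational coefficients. Once this bookkeeping is done, the theorem follows by direct substitution, as in the proof of Theorem~\ref{thm:VarB}.
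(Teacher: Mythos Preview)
Your proposal is correct and follows precisely the approach the paper indicates: the paper's own proof of Theorem~\ref{thm:VarD} simply records the height partition $N_a \cup P_a$ with $\height(\Proot{ij}) = i+j-2$, notes the absence of the $O$-family, and states that ``all calculations are completely analogous to those for type~$B_n$ and are suppressed.'' Your outline is in fact more detailed than what the paper writes, and your observation that in the simply-laced type~$D_n$ all nonzero covariances are $\pm\tfrac{1}{12}$ (no $\pm\tfrac{1}{8}$ terms) is a correct and useful simplification over the $B_n$ template.
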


All roots of type~$d_n$ are also roots in types~$B_n$ and~$C_n$---they again only change their heights.
In type~$D_n$, these are

\[
N_a  = \big\{\Nroot{ij} \mid 1 \leq i < j \leq n \mid j-i = a \big\}, \quad
P_a  = \big\{\Proot{ij} \mid 1 \leq i < j \leq n \mid j+i - 2 = a\big\}\,
\]
so that the roots of height~$a$ are $N_a \cup P_a$.

\medskip

Again, all calculations are completely analogous to those for type $B_n$ and are suppressed.

\end{document}